\DeclareMathOperator{\GL}{GL}
\DeclareMathOperator{\Log}{Log}
\DeclareMathOperator{\N}{N}
\DeclareMathOperator{\Conv}{Conv}
\DeclareMathOperator{\Cone}{Cone}
\DeclareMathOperator{\Vertex}{Vert}
\newcommand{\R}{\mathbb{R}}
\newtheorem{thm}{Theorem}[section] 
\newtheorem{cor}[thm]{Corollary} 
\newtheorem{prop}[thm]{Proposition} 
\newtheorem{lemma}[thm]{Lemma}
\newtheorem{problem}[thm]{Problem}
\theoremstyle{definition}
\newtheorem{definition}[thm]{Definition}
\newtheorem{ex}[thm]{Example}
\newtheorem{remark}[thm]{Remark}
\begin{document}

\title{On generalizing Descartes' rule of signs to hypersurfaces}

\author{Elisenda Feliu}
\address{Department of Mathematical Sciences, University of Copenhagen,
Universitetsparken 5,
2100 Copenhagen, Denmark}
\email{efeliu@math.ku.dk}

\author{Máté L. Telek}
\address{Department of Mathematical Sciences, University of Copenhagen,
Universitetsparken 5,
2100 Copenhagen, Denmark}
\email{mlt@math.ku.dk}

\begin{abstract}
We give partial generalizations of the classical Descartes' rule of signs to multivariate polynomials (with real exponents), in the sense that we provide upper bounds on the number of connected components of the complement of a hypersurface in the positive orthant. In particular, we give conditions based on the geometrical configuration of the exponents and the sign of the coefficients that guarantee that the number of connected components where the polynomial  attains a negative value is at most one or two. 
Our results fully cover the cases where such an  upper bound provided by the univariate Descartes' rule of signs   is one. This approach opens a new route to generalize  Descartes' rule of signs to the multivariate case, differing from previous works that aim at counting the number of positive solutions of a system of multivariate polynomial equations.

\medskip
\emph{Keywords: } semi-algebraic set; signomial; Newton polytope; connectivity; convex function
\end{abstract}

\maketitle

\section{Introduction}
Descartes’ rule of signs, established by René Descartes in his book {\it La Géométrie}  in 1637, provides an easily computable upper bound for the number of positive real roots of a univariate polynomial with real coefficients.
Specifically, 
it states that the polynomial  cannot have more positive real roots than the number of sign changes in its coefficient sequence (excluding zero coefficients). In 1828, Gauss improved the rule by showing that the number of positive real roots, counted with multiplicity, and the number of sign changes in the coefficients sequence, have the same parity \cite{Gauss1828}. 
Since then, several different proofs were published e.g. \cite{Curtiss1918, Albert1943, GenDescartes}, and several generalizations were made in several directions. In 1918, Curtiss gave a proof that works for real exponents and even for some infinite series  \cite{Curtiss1918}. In 1999, Grabiner  showed that Descartes' bound is sharp, that is, for every given sign sequence, one can always find compatible coefficients   such that the polynomial has the maximum possible number of positive roots provided by Descartes’ bound \cite{Grabiner_DescartesIsSharp}. Generalizations of the Descartes’ rule to other types of functions in one variable are also available \cite{Haukkanen_Tossavainen,Tokarev2011}.
 
 Efforts to generalize Descartes’ rule of signs to the multivariate case have focused   on systems of $n$ multivariate polynomial equations  in $n$ variables, and on bounding the number of solutions  in the positive orthant using sign properties of the coefficients of the system. The first conjecture for such a bound was published in 1996 by Itenberg and Roy \cite{Itenberg_Roy}. They were able to show their conjecture for some special cases. The first non-trivial example supporting the conjecture was presented by  Lagarias and Richardson \cite{Lagarias1997} in 1997. Almost at the same time, Li and Wang gave a counterexample to the Itenberg-Roy conjecture \cite{Li_Wang}.  The first  generalization was given recently and identifies systems with at most one solution in the positive orthant   \cite{MullerSigns}, see also \cite{CGS}. Afterwards,  a sharp upper bound was given for systems of polynomials supported on circuits  \cite{Bihan_2016, bihan2020optimal}. 
In these works, the bound is given in terms of the sign variation of a sequence associated both with the exponents and the coefficients of the system. To the best of our knowledge, these are the only known generalizations of Descartes’ rule of signs to the multivariate case.
 
Descartes' rule of signs allows however for a ``dual'' presentation: it gives an upper bound on the number of connected components of $\R_{>0}$ minus the zero set of the polynomial, and if the sign of the highest degree term is fixed, then it also gives an upper bound on the number of  connected components where the polynomial evaluates positively or negatively. 
Specifically, 
if we write $f(x)= a_0+a_1x + \dots + a_n x^n$ with $a_n\neq 0$, and let $\rho$ be the Descartes' bound on the number of positive roots, then there are at most $\rho+1$ connected components. If $\rho$ is odd, the upper bounds for the number of components where $f$ is positive or negative agree, while if $\rho$ is even, then there are at most $\tfrac{\rho}{2}+1$    connected components where $f$ attains the sign of $a_n$.
For example, if after ignoring zero coefficients, the sign sequence of the coefficients  is 
$(++--)$, then there is one connected component where the polynomial evaluates positively and one where it evaluates negatively. If the  sequence is 
$(+- + -)$, then there at most two connected components where the polynomial evaluates positively and at most two where it evaluates negatively, see Fig.~\ref{FIG0e}.
 
 With this presentation, Descartes' rule of signs may be generalized to hypersurfaces in the following sense. Let $f\colon \R^n_{>0} \rightarrow \R$ be a \emph{signomial} (a multivariate generalized polynomial, where we allow real exponents, restricted to the positive orthant), and consider the sets 
\begin{equation}\label{eq:V}
V_{>0}(f) := \{ x \in \mathbb{R}^{n}_{>0} \mid f(x) = 0 \}, \qquad V_{>0}^{c}(f) := \mathbb{R}^{n}_{>0} \setminus V_{>0}(f).
\end{equation}
We aim at bounding the number of connected components of  $V_{>0}^{c}(f) $ in terms of the relative position of the exponent vectors of each monomial of $f$ in $\R^n$, and the sign of the coefficients. 
This leads to the formulation of the following problem for the \emph{generalization  of Descartes' rule of signs to hypersurfaces.}

\begin{problem}
\label{TheProblem}
Consider a signomial $f\colon \mathbb{R}^{n}_{>0} \to \mathbb{R}$ with  $f(x) = \sum_{\mu \in \sigma(f)}c_{\mu}x^{\mu}$,  and $\sigma(f)\subseteq \R^n$   a finite set. 
Find a (sharp) upper bound on the number of connected components of $V_{>0}^{c}(f)$, where $f$ takes  negative (resp. positive) values, based on the sign of the coefficients and the geometry of $\sigma(f)$. 
\end{problem}

\begin{figure}[t]
  \centering
  \begin{minipage}[b]{0.2\linewidth}
    \centering
    \includegraphics[scale=0.25]{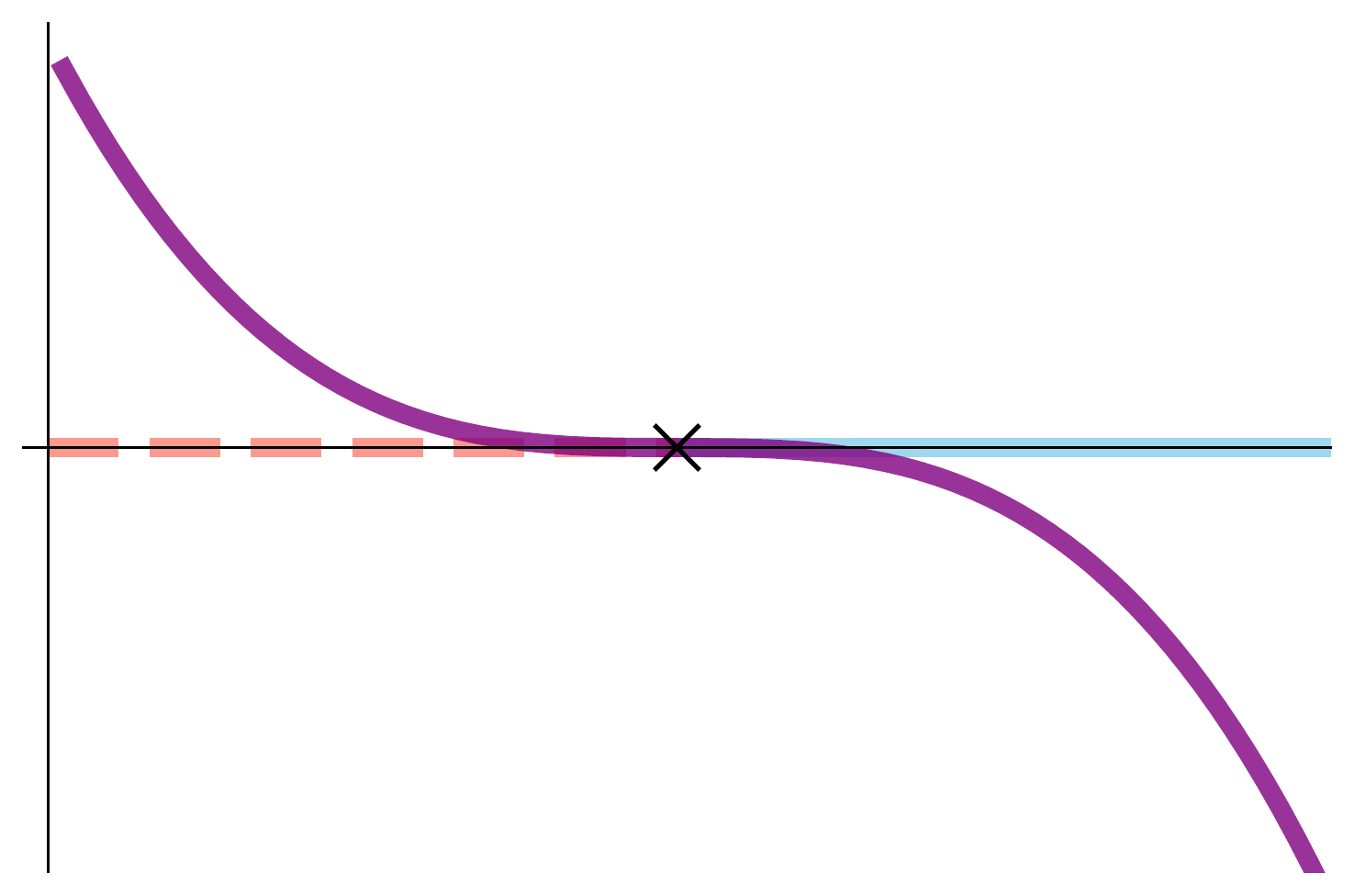}
    
    {\small (a)}
  \end{minipage}\qquad
  \begin{minipage}[b]{0.2\linewidth}
   \centering
    \includegraphics[scale=0.25]{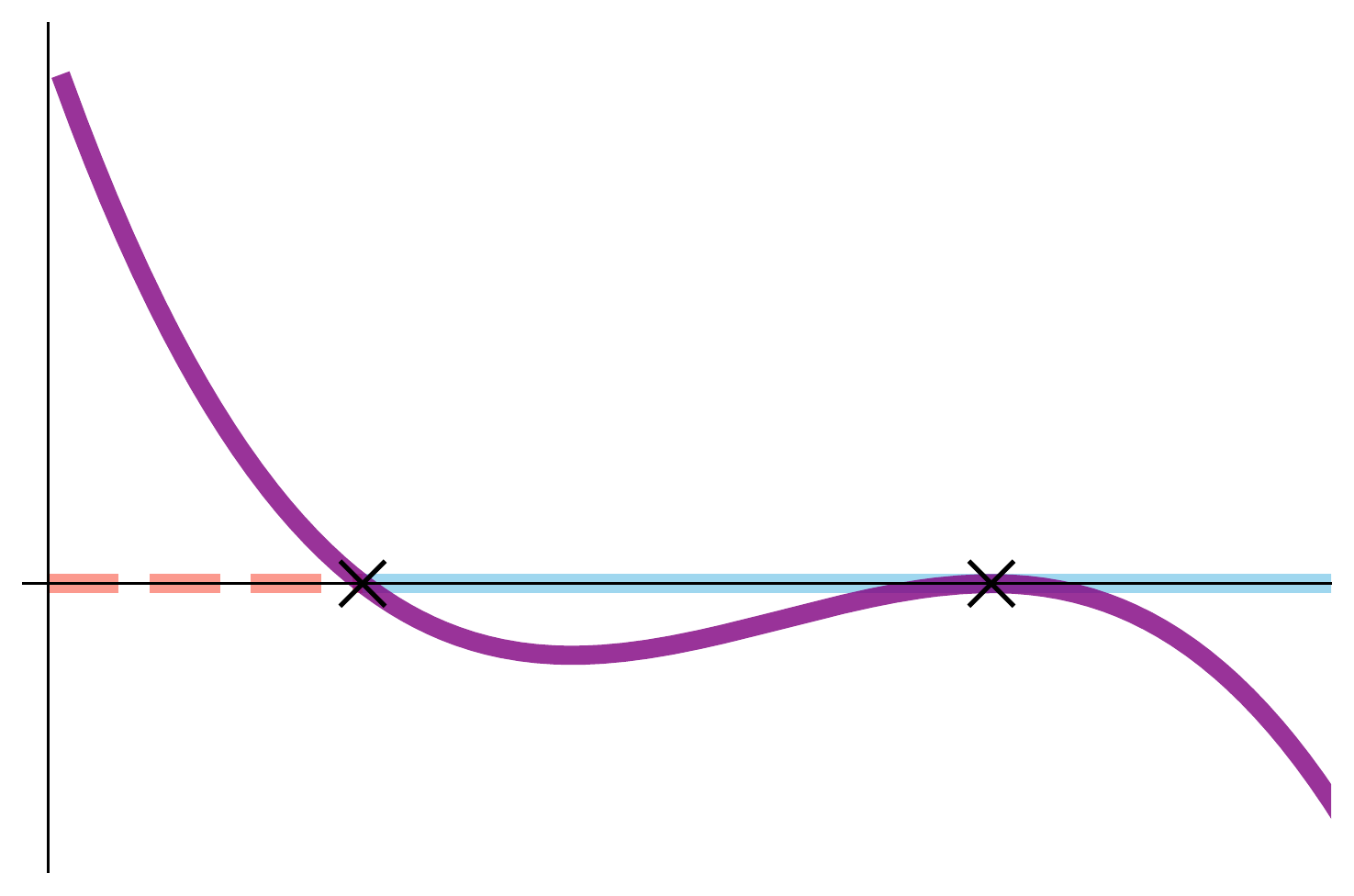}
    
        {\small (b)}
  \end{minipage}\qquad
  \begin{minipage}[b]{0.2\linewidth}
   \centering
    \includegraphics[scale=0.25]{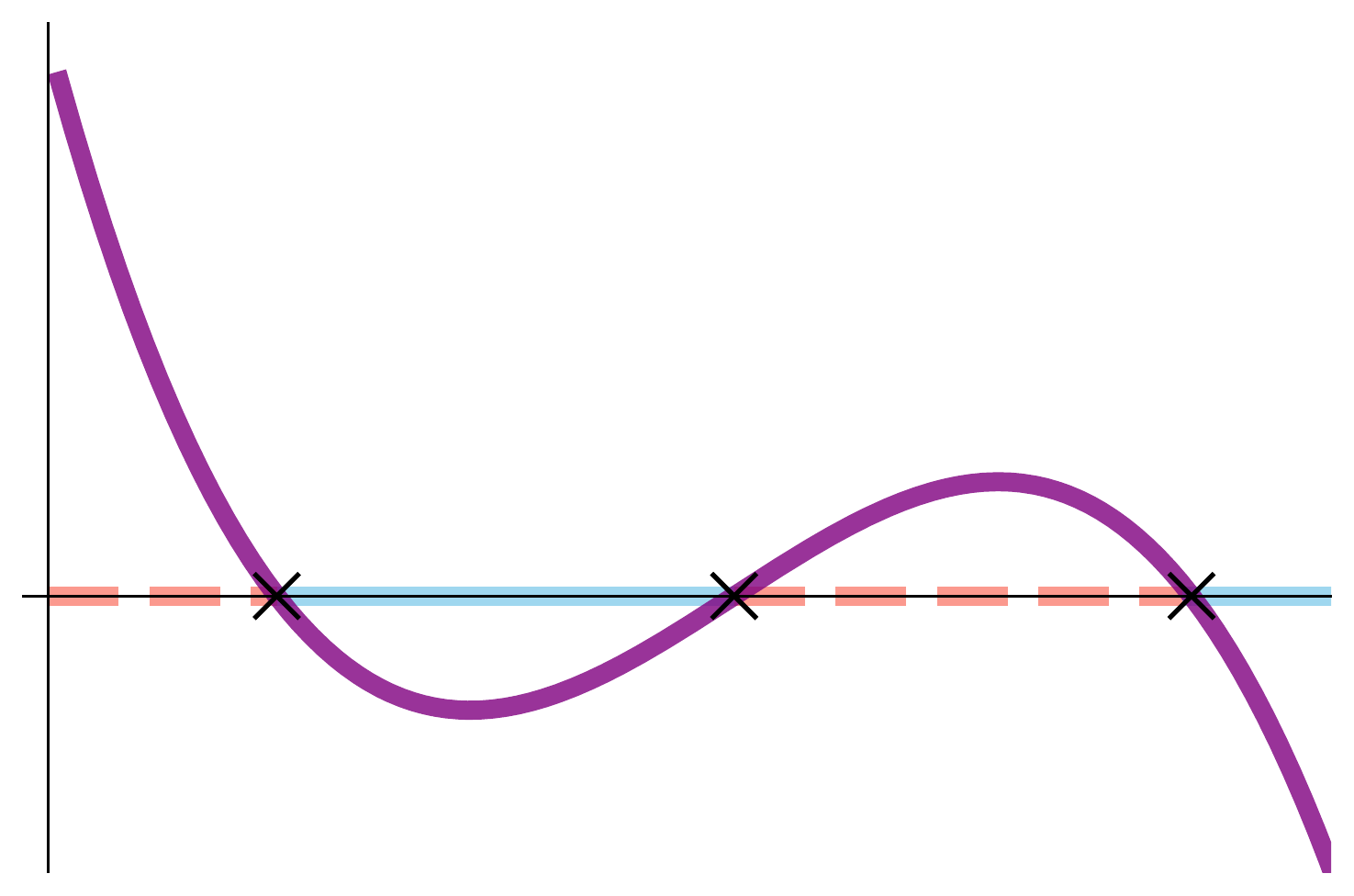}
    
        {\small (c)}
  \end{minipage}\qquad
  \begin{minipage}[b]{0.22\linewidth}
   \centering
    \includegraphics[scale=0.25]{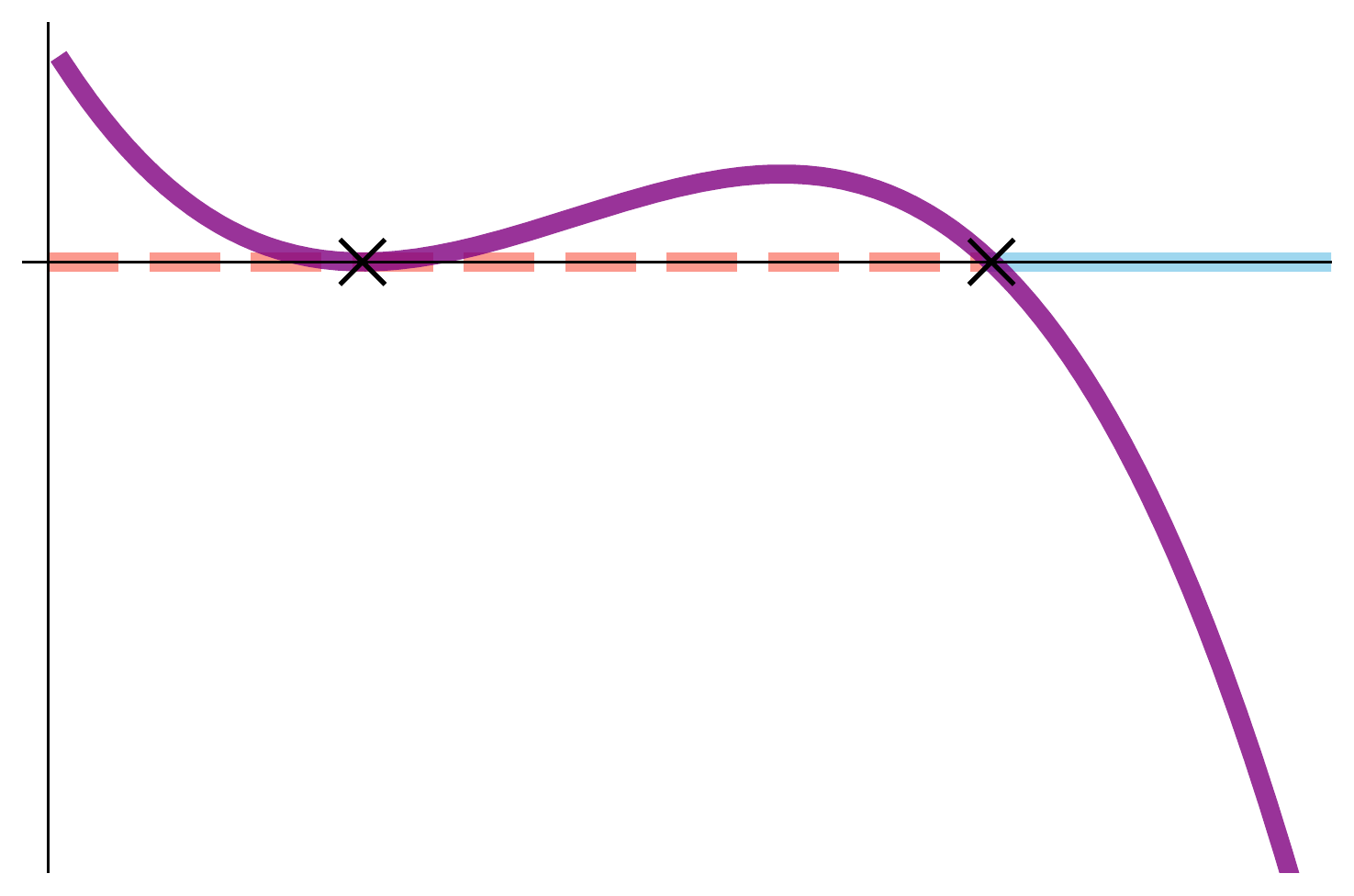}
    
        {\small (d)}
  \end{minipage}
\caption{{\small  Graphs of polynomials $p$ of degree three with coefficient sign sequence $(+-+-)$. In each figure, the connected components of $\R_{>0}$ minus the zero set of $p$, where $p$ evaluates positively or negatively, are shown in red and  blue respectively. 
 (a) $8-12x+6x^2-x^{3}$. (b) $9-15x + 7x^2 -x^{3}$. (c)  $ 15- 23x + 9x^{2} -x^{3} $. (d) $   3- 7x + 5x^{2}-x^{3}$.}}
\label{FIG0e}
\end{figure}

In this paper we address Problem~\ref{TheProblem} for generic $n$ in some scenarios, which, in particular, include the univariate Descartes' rule of signs when the upper bound on the number of connected components where $f$ is negative is one, that is, when   the sign sequence is one of $(+ \dots + - \dots -)$, \ $ (- \dots - + \dots +)$, or $(+ \dots + - \dots - + \dots +)$. 

Specifically, we show that $V_{>0}^{c}(f)$ has at most one connected component where $f$ is negative if $f$ has only one negative coefficient (Theorem~\ref{Thm_OneExponent}). The same holds if there exists a hyperplane
separating the exponents with positive coefficients from those with negative coefficients  (Theorem~\ref{Thm_StrictSepConnected}), or if the exponents with negative coefficient lie on a simplex such that the exponents with positive coefficient lie outside the simplex in a certain way (Theorem~\ref{Thm_Convexification}). A detailed account of our results is given in Section~\ref{Section_Discussion}. 
We focus on finding upper bounds for the number of negative connected components, as statements about the number of positive connected components of $V_{>0}^{c}(f)$ follows by studying $-f$. 

\medskip
If $f$ is a polynomial, that is, $\sigma(f) \subseteq \mathbb{Z}_{\geq 0}^{n}$, the set $V_{>0}^{c}(f)$ is semi-algebraic and hence it has a finite number of connected components \cite[Theorem 5.22]{Basu_book}.  Computing topological invariants of semi-algebraic sets, such as the number of connected components, has been heavily studied in real algebraic geometry. Upper bounds of the sum of the Betti numbers of a semi-algebraic set in terms of the number of variables, the degree and the number of the defining polynomials can be found for example in \cite[Theorem 1]{Basu_BoundingtheBetti}, \cite[Theorem 6.2]{Gabrielov_Vorobjov}, and \cite[Theorems 1.8 and 2.7]{Basu_Multi-degree}. For the  number of connected components of a semi-algebraic set, that is, the $0$-th Betti number,  an upper bound was given in \cite[Theorem 1]{Basu_OnTheNumberOfCells}, \cite[Theorem 1.1]{Basu_RefinedBounds}.

There exist several algorithms to compute the number of connected components of a semi-algebraic set. One algorithm is provided by Cylindrical Algebraic Decomposition, but it has double exponential complexity (see \cite[Remark 11.19]{Basu_book}). A more efficient way to compute connected components is using so-called road maps. In this way, one has an algorithm with single exponential complexity. For more details about this algorithm, see \cite[Section 3]{Basu_Survey}.  

\medskip
The Descartes' rule of signs is of special importance in applications where positive solutions to polynomial systems are the object of study. This is the case in models in biology and (bio)chemistry where variables are  concentrations or abundances. It is precisely in this setting, namely  the theory of biochemical reaction networks, where our motivation to consider Problem \ref{TheProblem} comes from. 
In an upcoming paper, we show that the connectivity of the set of parameters that give rise to  multistationarity in a reaction network \cite{craciun-feinberg-pnas,FeliuPlos,joshi-shiu-III} relies on the number of connected components of the complementary of a hypersurface. The hypersurface of interest is  large for realistic networks, with many monomials and variables, and hence not manageable by algorithms from  semi-algebraic geometry. The advantage of the techniques presented here is that they rely on linear optimization problems, and can handle this application.

The paper is organized as follows. In Section~\ref{Preliminaries}, we provide the notation and basic results on signomials. In Section~\ref{Section_Path_on_log_paper}, we give bounds answering  Problem \ref{TheProblem} using   separating  hyperplanes (Theorem \ref{Thm_StrictSepConnected}, \ref{Thm_StripAroundPosExp}), while  in Section~\ref{Convexification}   bounds  are found by providing conditions that guarantee that the signomial can be transformed into a convex function, while preserving the number of connected components of  $V_{>0}^{c}(f)$ (Theorem~\ref{Thm_Convexification}). In Section  \ref{Section_Discussion}, we compare the two approaches.
Throughout we illustrate our results with examples and figures, worked out using SageMath \cite{sagemath}.

\subsection*{Notation} $\mathbb{R}_{\geq0}$, $\mathbb{R}_{>0}$ and $\R_{<0}$ refer to the sets of non-negative, positive and negative real numbers respectively. We denote the Euclidean scalar product of two vectors $v,w \in \mathbb{R}^{n}$ by $v \cdot w$. For a set $\sigma \subseteq \mathbb{R}^{m}$, a matrix $M \in \mathbb{R}^{n \times m}$ and a vector $v \in \mathbb{R}^{n}$ we write $M\sigma + v$ for the set $\{Ms + v \mid s \in \sigma \}$. For two sets $A,B \subseteq \mathbb{R}^{n}$, the set $A+B=\{ a + b \mid a \in A, \, b \in B \}$ is the Minkowski sum of $A$ and $B$. We let $\Conv(A)$ denote the convex hull of $A$. For $a_1,\dots,a_m\in \R^n$, we  write $\Conv(a_1,\dots,a_m):=\Conv(\{a_1,\dots,a_m\})$. 
By convention, the maximum over an empty set is $-\infty$, and the minimum over an empty set is $\infty$. The symbol $\#S$ denotes the cardinality of a finite set $S$.

\section{Preliminaries}
\label{Preliminaries}
The central object of study is a function
\begin{align}\label{eq:sig}
f\colon \mathbb{R}^{n}_{>0} \to \mathbb{R},  \qquad f(x)= \sum\limits_{\mu \in \sigma(f)}c_{\mu}x^{\mu}, \quad \textrm{with }c_\mu \in \R \setminus \{0\},
\end{align}
where $\sigma(f) \subseteq \mathbb{R}^{n}$ is a finite set, called the \textit{support} of $f$. Here $x^{\mu}$ is the usual short notation for $x_{1}^{\mu_{1}}  \dots x_{n}^{\mu_{n}}$. To emphasize that we restrict the domain of $f$ to the positive orthant, we call $f$ a \textit{signomial}. That is, a signomial is a generalized polynomial on the positive orthant.  The term signomial was introduced by Duffin and Peterson in the early 1970s  \cite{duffin1973geometric}. Since then, it is commonly used in geometric programming \cite{GeomProgramming, FuzzyGeo}. 

Given a signomial $f$ as in \eqref{eq:sig} and a set $S\subseteq \sigma(f)$, we define \emph{the restriction of $f$ to $S$} by considering the monomials with exponent vectors in $S$: 
\begin{equation}\label{eq:res} 
f_{|S}(x)=  \sum\limits_{\mu \in S}c_{\mu}x^{\mu}.
\end{equation}

With the notation in \eqref{eq:V} and by continuity,  the signomial $f$ has constant sign in each connected component of  $V_{>0}^{c}(f)$.

\newpage
\begin{definition} Let $f$ be a signomial in $n$ variables. 
\begin{itemize}
\item A connected component $U$ of  $V_{>0}^{c}(f)$ is said to be \textit{positive} if $f(x) > 0$ for every $x \in U$. We say $U$ is \textit{negative}, if $f(x) < 0$ for every $x \in U$.
\item  The convex hull of   $\sigma(f)$ is called the \textit{Newton polytope} of $f$ and denoted by $\N(f)$. 
\item A point  $\alpha \in \sigma(f)$ is called positive, resp. negative, if the coefficient $c_\alpha$ is positive, resp. negative. The set  $\sigma(f)$ is partitioned into the set of positive points and the set of negative points: 
\begin{align*}
\sigma_{+}(f) := \{ \alpha \in \sigma(f) \mid c_\alpha > 0 \}\quad \text{        and        } \quad \sigma_{-}(f) := \{ \beta \in \sigma(f)  \mid c_{\beta} < 0 \}.
\end{align*}
\end{itemize}
\end{definition}

\begin{figure}[t]
\centering
\begin{minipage}[h]{0.4\textwidth}
\centering
\includegraphics[scale=0.5]{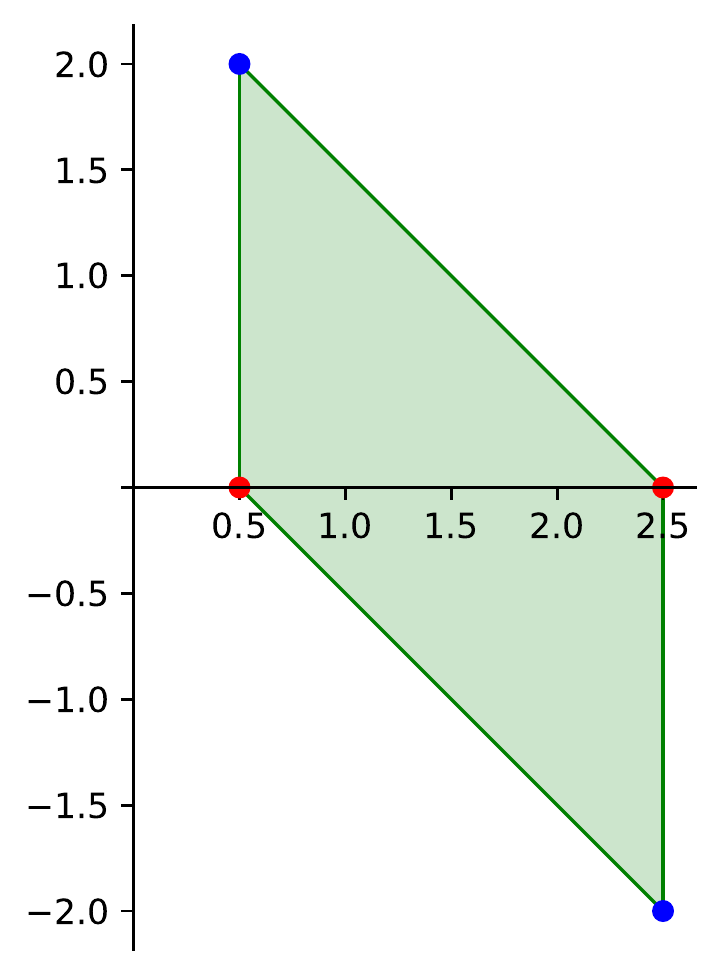}

{\small (a)}
\end{minipage}
\begin{minipage}[h]{0.4\textwidth}
\centering
\includegraphics[scale=0.5]{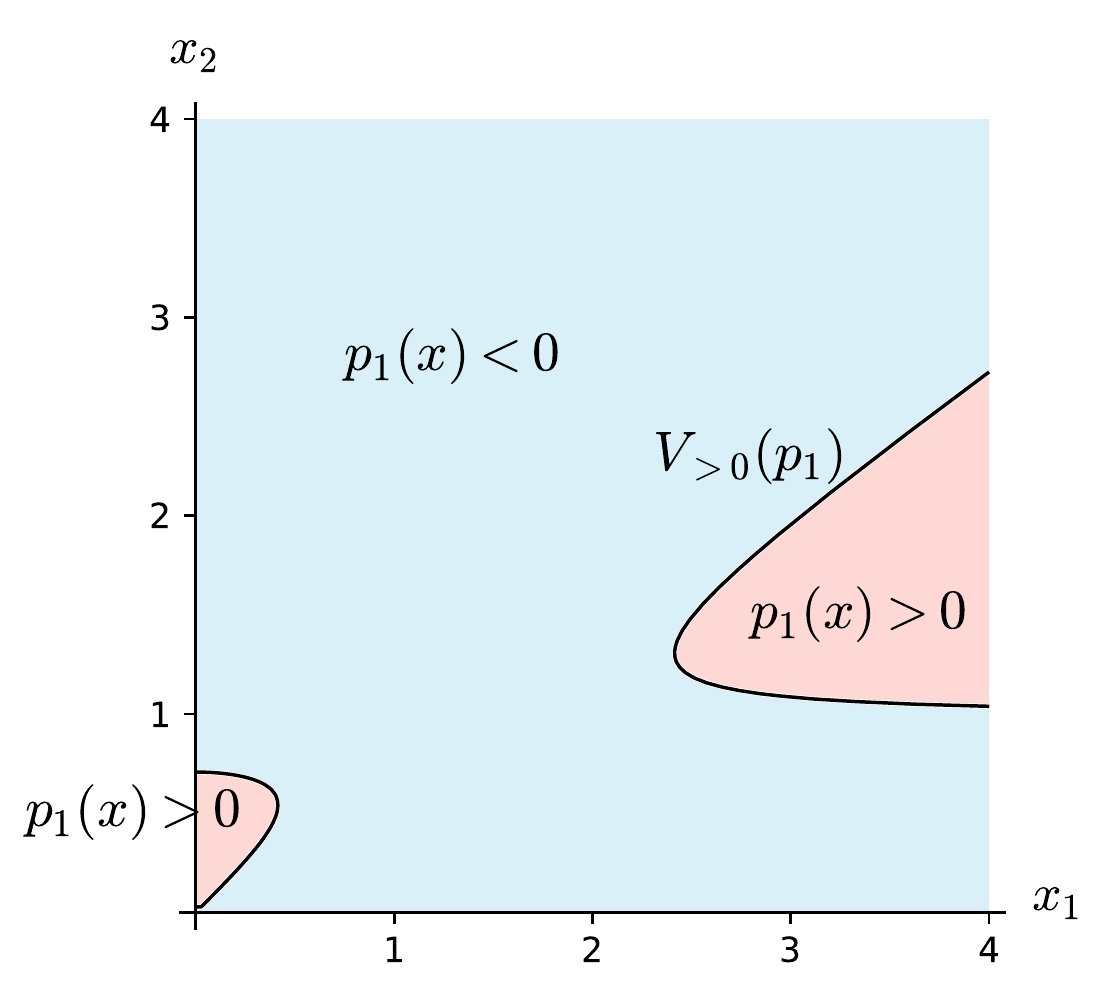}

{\small (b)}
\end{minipage}
\caption{{\small (a) Newton polytope of $p_{1}(x_{1},x_{2})$ from Example~\ref{Example_2_1}. Blue points are negative  and red points are positive. (b) The positive and negative connected components of $V_{>0}^{c}(p_{1})$.}}\label{FIG_Example_2_1}
\end{figure}

\begin{ex}
\label{Example_2_1}
The  support of the signomial 
\[ p_{1}(x_{1},x_{2}) = x_{1}^{2.5} - 2x_{1}^{0.5}x_{2}^{2} + x_{1}^{0.5} - x_{1}^{2.5}x_{2}^{-2}\]
 is  $\sigma(p_1) = \{ (2.5,0), (0.5,2) , (0.5,0), (2.5,-2) \}$. The points $(2.5,0)$, $(0.5,0)$ are positive, while the points $(0.5,2)$, $(2.5,-2)$ are negative. The Newton polytope of $p_{1}$ and the positive and negative connected components of $V_{>0}^c(p_{1})$ are displayed in Fig.~\ref{FIG_Example_2_1}.
\end{ex}

In what follows, it will be convenient to 
consider transformations of the support that do not change the number of negative (resp. positive) connected components. Any invertible matrix $M \in \GL_{n}(\mathbb{R})$ induces a function
\begin{align}
\label{Nota_MonChange}
h_{M} \colon \mathbb{R}^{n}_{>0} \to \mathbb{R}^{n}_{>0}, \qquad  x \mapsto x^M:=(x^{M_{1}}, \dots ,x^{M_{n}})
\end{align} 
where $M_{1}, \dots , M_{n}$ denote the columns of $M$. The function $h_{M}$ is called a  \textit{monomial change of variables} and it is a homeomorphism. 
\begin{lemma}
\label{Lemma_TransInvariant}
For $M \in \GL_{n}(\mathbb{R})$, $v \in \mathbb{R}^{n}$, and a signomial $f$ on $\mathbb{R}^{n}_{>0}$, define the signomial
\[ F_{M,v,f}\colon \mathbb{R}^{n}_{>0} \to \mathbb{R}, \qquad F_{M,v,f}(x)= x^v f(h_M(x)). \]   
There is a homeomorphism between the positive (resp. negative) connected components of $V_{>0}^c(f)$ and $V_{>0}^c(F_{M,v,f})$. Furthermore,
\begin{align*}
\sigma_{+}\big(F_{M,v,f}\big) = M \sigma_{+}(f)+v \quad \text{        and        } \quad \sigma_{-}\big(F_{M,v,f}\big) = M \sigma_{-}(f)+v.
\end{align*}
\end{lemma}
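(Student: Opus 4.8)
The plan is to reduce the whole statement to two elementary observations: that $h_M$ is a homeomorphism of $\mathbb{R}^n_{>0}$ with inverse $h_{M^{-1}}$, as already recorded before the lemma, and that the monomial $x^v$ is strictly positive on $\mathbb{R}^n_{>0}$ and hence sign-preserving.

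First I would compute the support of $F_{M,v,f}$ explicitly. Expanding $f(x)=\sum_{\mu\in\sigma(f)}c_\mu x^\mu$ and using the identity $(h_M(x))^\mu = \prod_{i=1}^n (x^{M_i})^{\mu_i} = x^{M\mu}$, valid for $x\in\mathbb{R}^n_{>0}$ (a short computation with the columns $M_1,\dots,M_n$ of $M$), one gets $F_{M,v,f}(x)= x^v f(h_M(x)) = \sum_{\mu\in\sigma(f)} c_\mu\, x^{M\mu+v}$. Since $M\in\GL_n(\mathbb{R})$, the affine map $\mu\mapsto M\mu+v$ is a bijection of $\mathbb{R}^n$; hence distinct exponents of $f$ yield distinct exponents here, no cancellation can occur, and $\sigma(F_{M,v,f}) = M\sigma(f)+v$ with the coefficient at $x^{M\mu+v}$ equal to $c_\mu$. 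In particular the sign of each coefficient is unchanged, so $M\mu+v$ is a positive (resp. negative) point of $F_{M,v,f}$ precisely when $\mu$ is a positive (resp. negative) point of $f$; this is the asserted pair of identities $\sigma_\pm(F_{M,v,f}) = M\sigma_\pm(f)+v$.

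Next I would handle the topological claim. Because $x^v>0$ on $\mathbb{R}^n_{>0}$, the equality $F_{M,v,f}(x)=0$ holds if and only if $f(h_M(x))=0$, so $V_{>0}(F_{M,v,f}) = h_M^{-1}\big(V_{>0}(f)\big) = h_{M^{-1}}\big(V_{>0}(f)\big)$, the last equality because $h_{M^{-1}}$ is the inverse of $h_M$. Passing to complements in $\mathbb{R}^n_{>0}$ gives $V_{>0}^c(F_{M,v,f}) = h_{M^{-1}}\big(V_{>0}^c(f)\big)$, so $h_{M^{-1}}$ restricts to a homeomorphism $V_{>0}^c(f)\to V_{>0}^c(F_{M,v,f})$; a homeomorphism carries connected components bijectively onto connected components, which gives the required correspondence. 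To see which components correspond, take $y\in V_{>0}^c(F_{M,v,f})$, write $y = h_{M^{-1}}(x)$ with $x = h_M(y)\in V_{>0}^c(f)$, and note $F_{M,v,f}(y) = y^v f(h_M(y)) = y^v f(x)$; since $y^v>0$, the signs of $F_{M,v,f}(y)$ and $f(x)$ agree, so a component $U$ of $V_{>0}^c(f)$ is positive (resp. negative) exactly when $h_{M^{-1}}(U)$ is.

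I do not expect a genuine obstacle: this is a bookkeeping lemma, and everything reduces to the two observations above. The only points needing a little care are verifying that no monomials collapse under the substitution — which is exactly where invertibility of $M$ enters — and keeping track of the auxiliary factor $x^v$, whose positivity is what guarantees that it neither creates nor removes zeros and that it preserves the sign on each connected component. Note also that the homeomorphism $h_{M^{-1}}$ is independent of $v$, so the shift by $v$ only affects the location of $\sigma(F_{M,v,f})$, not the topology of $V_{>0}^c(F_{M,v,f})$.
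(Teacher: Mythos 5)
Your proposal is correct and follows essentially the same route as the paper: compute $F_{M,v,f}(x)=\sum_{\mu}c_\mu x^{M\mu+v}$ to get the support identities, and use the positivity of $x^v$ together with the homeomorphism $h_M$ to transport the sign-labelled connected components. Your extra remark that invertibility of $M$ prevents monomials from collapsing is a fine (if implicit in the paper) point of care.
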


\begin{proof}
If $f(x)=\sum\nolimits_{\mu \in \sigma(f)}c_{\mu}x^{\mu}$, we have
\[F_{M,v,f}(x)=  x^v f(h_M(x)) = \sum\limits_{\mu \in \sigma(f)} c_{\mu} x^v (x^M)^{\mu} = 
 \sum\limits_{\mu \in \sigma(f)} c_{\mu}  x^{M\mu+ v}. \]
From this, the second part of the lemma follows.

For the first part, clearly, the identity map induces a sign-preserving homeomorphism between 
$V_{>0}^c(F_{M,v,f})$ and $V_{>0}^c(f\circ h_M)$, and the map $h_M$ induces a homeomorphism between $V_{>0}^c(f\circ h_M)$ and $V_{>0}^c(f)$, which also preserves the sign of each connected component.  
\end{proof}

In view of Lemma~\ref{Lemma_TransInvariant}, we can for example assume that all exponent vectors belong to $\R^n_{>0}$ if necessary.  Moreover, if $\sigma(f) \subseteq \mathbb{Q}^{n}$, then $f$ can be replaced by a polynomial and the number of negative (resp. positive) connected components of $V_{>0}^{c}(f)$ remains unchanged.

\begin{ex}
The matrix $M = \begin{pmatrix}
 0.5 &  0.5 \\
  0.5 &  0
 \end{pmatrix}$ and the vector $v = (-0.25,-0.25)$ transform the signomial $p_{1}$ from Example \ref{Example_2_1} to the polynomial $F_{M,v,p_1}(x_{1},x_{2}) = x_{1}x_{2} - 2 x_{2}+1-x_{1}$.
\end{ex}

\section{Paths on logarithmic scale}
\label{Section_Path_on_log_paper}
In this section, we provide the first results towards Problem \ref{TheProblem}. The idea behind the proofs in this section relies on reducing 
the multivariate signomial to a univariate signomial, and applying Descartes' rule of signs. 
To this end, given $v \in \mathbb{R}^{n}$ and $x \in \mathbb{R}^{n}_{>0}$, we consider continuous paths
\begin{align}\label{Nota_Path}
\gamma_{v,x}\colon [1,\infty) \to \mathbb{R}^{n}_{>0}, \qquad t \mapsto (t^{v_{1}}x_{1},\dots ,t^{v_{n}}x_{n}).
\end{align}
In logarithmic scale, applying the coordinate-wise natural logarithm map  
\begin{align}
\label{Eq_Log}
 \Log\colon \mathbb{R}^{n}_{>0} \to \mathbb{R}^{n}, \qquad (x_{1},\dots,x_{n}) \mapsto (\log(x_{1}),\dots,\log(x_{n})),
\end{align}
each path $\gamma_{v,x}$ is transformed into a half-line $\tau_{v,\Log(x)}\colon 
[0,\infty) \to \mathbb{R}^{n}$, $s \mapsto s\, v+\Log(x)$, 
with start point $\Log(x)$ and direction vector $v$.  Specifically, 
\begin{equation}\label{eq:Log}
 \Log \circ \, \gamma_{v,x}  = \tau_{v,\Log(x)}  \circ \log, \qquad \textrm{in }[1,\infty).
\end{equation}%
Since the logarithm map $\Log$ is a homeomorphism, the topological properties of $f^{-1}( \R_{<0})$ and of its image under $\Log$ are the same. This observation gives us an easy geometric way to think about paths $ \gamma_{v,x}$.

Given a signomial $f$, each   $v \in \mathbb{R}^{n}$ and $x \in \mathbb{R}^{n}_{>0}$ induce a signomial function in one variable:
\begin{align}\label{Nota_IndFunc}
f_{v,x}\colon \mathbb{R}_{>0} \to \mathbb{R}, \qquad t \mapsto \sum\limits_{\mu \in \sigma(f)} (c_{\mu}x^{\mu}) t^{v\cdot \mu}. 
\end{align}
Note that $f_{v,x}(1)=f(x)$. 
Since the restriction of $f_{v,x}$ to $[1,\infty)$ is the composition $f \circ \gamma_{v,x}$, understanding the properties of $f_{v,x}$ allows us to determine whether the path $\gamma_{v,x}$ is in the pre-image of the negative real line under $f$. This motivates the study of signomials in one variable.
The following lemma will be used repeatedly in what follows. Its proof is a direct application of Descartes' rule of signs.

\begin{lemma}
\label{Lemma_OneDimSignomial}
Let $g\colon \mathbb{R}_{>0} \to \mathbb{R}$, $g(t)=\sum_{\nu \in \sigma(g)}a_{\nu}t^{\nu}$, be a signomial in one variable such that $g(1)<0$. 
\begin{itemize}
\item[(i)] If the sign sequence of the coefficients of $g$ has at most two sign changes, and the leading coefficient is positive, then there is unique $\rho \in (1,\infty )$ such that $g(\rho)=0$, and it holds that $g(t) < 0$ for all $t\in [1,\rho)$ and $g(t) > 0$ for all $t\in (\rho,\infty)$.
\item[(ii)] If the sign sequence of the coefficients of $g$ has at most one sign change,  and the leading coefficient is negative, then $g(t) < 0$ for all $t\in [1,\infty)$.
\end{itemize}
\end{lemma}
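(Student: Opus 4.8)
The plan is to reduce both statements to the univariate Descartes' rule of signs applied to $g$, keeping careful track of the behaviour on the half-line $[1,\infty)$ rather than on all of $\mathbb{R}_{>0}$. First I would recall the precise form of Descartes' rule we need: if $g(t)=\sum_{\nu\in\sigma(g)}a_\nu t^\nu$ is a signomial in one variable, then the number of positive roots of $g$ (counted with multiplicity) is at most the number of sign changes in the coefficient sequence, ordered by increasing exponent. Since $g(1)<0$, the value at $t=1$ is already prescribed; the game is to control how many times $g$ can change sign for $t\ge 1$ and to pin down the sign at the two ends $t\to 1^+$ (where $g$ is negative) and $t\to\infty$ (where $g$ behaves like its leading term).

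For part (ii): the coefficient sequence has at most one sign change and the leading (highest-exponent) coefficient is negative. By Descartes, $g$ has at most one positive root counted with multiplicity, hence at most one sign change on $(0,\infty)$; in particular $g$ has at most one root in $(1,\infty)$. I would argue by contradiction: suppose $g(\rho)\ge 0$ for some $\rho>1$. Since $g(1)<0$, continuity and the intermediate value theorem force a root in $(1,\rho]$. Because the leading coefficient is negative, $g(t)\to-\infty$ as $t\to\infty$, so if $g$ took a nonnegative value at some $\rho>1$ it would have to come back down, producing a second sign change (a second root with a sign change) on $(1,\infty)$, contradicting the Descartes bound of one. Hence $g(t)<0$ for all $t\in[1,\infty)$. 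A small subtlety: a root of even multiplicity need not be a sign change, so I must phrase the counting in terms of sign changes of the continuous function, noting that each actual sign change forces at least one root with odd multiplicity, and Descartes bounds the total (with multiplicity) by one, so there is at most one sign change on $(0,\infty)$; combined with $g(1)<0$ and $g(\infty)=-\infty$, no sign change can occur on $[1,\infty)$.

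For part (i): the coefficient sequence has at most two sign changes and the leading coefficient is positive, so $g(t)\to+\infty$ as $t\to\infty$. Since $g(1)<0<\lim_{t\to\infty}g(t)$, there is at least one root in $(1,\infty)$, and at least one sign change there. By Descartes, $g$ has at most two positive roots with multiplicity, hence at most two sign changes on $(0,\infty)$. If there were two roots $\rho_1<\rho_2$ in $(1,\infty)$ with a sign change at each, then between them $g$ would be positive (coming up from negative before $\rho_1$ and going up to $+\infty$ after $\rho_2$), which would actually require $g$ to go negative and then positive again — that is three sign changes on $[1,\infty)$, already exceeding two; alternatively two sign changes on $[1,\infty)$ would leave $g$ negative near infinity, contradicting the positive leading coefficient. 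So there is exactly one sign change on $[1,\infty)$: a unique $\rho\in(1,\infty)$ with $g<0$ on $[1,\rho)$ and $g>0$ on $(\rho,\infty)$. I would also remark that uniqueness of the crossing point $\rho$ (as opposed to uniqueness of the root) follows because any further root in $(\rho,\infty)$ would have even multiplicity and, together with the sign change at $\rho$, still be counted by Descartes; but since any root beyond $\rho$ would need $g$ to touch zero from the positive side, it could only reduce the ``budget'' and cannot change the sign pattern described.

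The main obstacle is purely bookkeeping: making the translation between ``sign changes of the continuous function on $[1,\infty)$'' and ``number of positive roots counted with multiplicity on $(0,\infty)$'' fully rigorous, since Descartes bounds the latter but the statement is about the former, and a root of even multiplicity is not a sign change. The clean way is: each sign change of $g$ on $[1,\infty)$ contributes at least one root of odd multiplicity there, distinct sign changes contribute distinct such roots, so (number of sign changes on $[1,\infty)$) $\le$ (number of positive roots with multiplicity) $\le$ (number of sign changes in the coefficient sequence); then the prescribed signs at $t=1$ (negative) and $t\to\infty$ (sign of the leading coefficient) determine the parity and hence the exact sign pattern. Everything else is a short intermediate-value-theorem argument.
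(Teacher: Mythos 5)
Your proposal is correct and is precisely the argument the paper intends: the paper offers no proof beyond remarking that the lemma ``is a direct application of Descartes' rule of signs,'' and your reduction --- the Descartes bound on positive roots counted with multiplicity, combined with the prescribed sign at $t=1$, the eventual sign at infinity, and the observation that touch-points of even multiplicity also consume the Descartes budget (which is what rules out $g(\rho)=0$ without a sign change) --- is exactly that application. One small correction: for real exponents $g(t)$ need not tend to $\pm\infty$ as $t\to\infty$ (if all exponents are negative it tends to $0$); what is true, and all your argument actually uses, is that for large $t$ the sign of $g(t)$ agrees with the sign of the leading coefficient, since $g(t)=t^{\nu_{\max}}\bigl(a_{\nu_{\max}}+o(1)\bigr)$ with $t^{\nu_{\max}}>0$.
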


Following the notation of \cite[Section 2.3.1]{joswig2013polyhedral} and \cite[Section 1.1]{grunbaum2003convex}, for
every $v \in \mathbb{R}^{n}$ and $a \in \mathbb{R}$, we define a \textit{hyperplane} $\mathcal{H}_{v,a} := \{ \mu \in \mathbb{R}^{n} \mid v \cdot \mu = a \}$,  
and two \textit{half-spaces}
\begin{align*}
\mathcal{H}_{v,a}^{+} := \{ \mu \in \mathbb{R}^{n} \mid  v \cdot \mu \geq a \}\quad \text{        and        } \quad\mathcal{H}_{v,a}^{-} := \{ \mu \in \mathbb{R}^{n} \mid  v \cdot \mu \leq a \}.
\end{align*}
We let $\mathcal{H}_{v,a}^{+,\circ},\mathcal{H}_{v,a}^{-,\circ}$ denote the interior of $\mathcal{H}_{v,a}^{+}$, and $\mathcal{H}_{v,a}^{-}$ respectively. 
Although $\mathcal{H}_{v,a} =\mathcal{H}_{-v,-a}$, the choice of sign determines which half-space is positive  and which one is negative. 

As we will see in Lemma \ref{Lemma_BehavOffvx}, the relative position of a hyperplane $\mathcal{H}_{v,a}$ and the points in $\sigma(f)$ gives valuable information about the behavior of the function $f_{v,x}$ in \eqref{Nota_IndFunc}. 
To this end, we introduce the following types of vectors $v$.

\begin{definition}
\label{Def_Strip}
Let $v\in \R^n$. 
\begin{enumerate}
\item[(i)] We say that $v$ is a \textit{separating vector} of $\sigma(f)$ if for some $a\in \R$ it holds
\[ \sigma_-(f) \subseteq \mathcal{H}_{v,a}^{+}, \qquad \sigma_+(f) \subseteq \mathcal{H}_{v,a}^{-}.\]
The separating vector $v$ is \emph{strict} if  $\sigma_-(f) \cap \mathcal{H}_{v,a}^{+,\circ}\neq \emptyset$, and  \emph{very strict} if additionally $\sigma_{-}(f)\cap \mathcal{H}_{v,a} = \emptyset$ for some $a\in \R$. Let $\mathcal{S}^{-}(f)$ denote the set of separating vectors of $\sigma(f)$. 

\item[(ii)] We say that $v$ is an \emph{enclosing vector} of $\sigma(f)$ if for some $a,b\in \R$, $a\leq b$, it holds
\[  \sigma_-(f) \subseteq \mathcal{H}_{v,a}^{+} \cap \mathcal{H}_{v,b}^{-}, \qquad \sigma_+(f) \subseteq \R^n\setminus 
(\mathcal{H}_{v,a}^{+,\circ} \cap \mathcal{H}_{v,b}^{-,\circ}). 
\] 
We say that  $v$ is a \emph{strict enclosing vector} of $\sigma(f)$ if additionally 
$\sigma_+(f) \cap \mathcal{H}_{v,a}^{-,\circ}\neq \emptyset$ and $\sigma_+(f) \cap \mathcal{H}_{v,b}^{-,\circ}\neq \emptyset$. 
We denote by $\mathcal{E}^{-}(f)$ the set of enclosing vectors of $\sigma(f)$. 
\end{enumerate}
\end{definition}

The sets of separating and enclosing vectors can be described algebraically as
\begin{align} 
\mathcal{S}^{-}(f) &= \Big\{ v \in \mathbb{R}^{n} \mid   \max_{\alpha\in \sigma_{+}(f)}  v \cdot \alpha \leq \min\limits_{\beta \in \sigma_{-}(f)} v \cdot \beta \Big\}, \label{eq:Salg} \\ 
\mathcal{E}^{-}(f) &= \{ v \in \mathbb{R}^{n} \mid \forall \alpha \in \sigma_{+}(f)\colon  v \cdot \alpha \leq \min\limits_{\beta \in \sigma_{-}(f)} v \cdot \beta \text{  or  } \max\limits_{\beta \in \sigma_{-}(f)} v \cdot \beta \leq v \cdot \alpha \}. \label{eq:Ealg}
\end{align}

 For $v\in \mathcal{S}^{-}(f) $, setting $a:= \max_{\alpha\in \sigma_{+}(f)}  v \cdot \alpha$, Definition~\ref{Def_Strip}(i) holds. For $v\in \mathcal{E}^{-}(f)$, 
we let $a:=\min_{\beta \in \sigma_{-}(f)} v \cdot \beta$ and $b:=\max_{\beta \in \sigma_{-}(f)} v \cdot \beta$
and Definition~\ref{Def_Strip}(ii) holds.
 
Note that a separating vector is in particular an enclosing vector, that is, $\mathcal{S}^{-}(f) \subseteq \mathcal{E}^{-}(f)$. 
Using the algebraic description of $\mathcal{S}^-(f)$ from  \eqref{eq:Salg}, one can easily show that $\mathcal{S}^{-}(f)$ is a convex cone, i.e. it is closed under addition and multiplication by a nonnegative scalar \cite[Ch. 1]{Ziegler_book}. 

For a separating vector $v$ to be strict, there must be a negative point in $\sigma(f)$ in $\mathcal{H}_{v,a}^{+}$ that is not in the hyperplane $\mathcal{H}_{v,a}$. 
That is,  there exists $\beta_{0} \in \sigma_{-}(f)$ such that $\max_{\alpha\in \sigma_{+}(f)}  v \cdot \alpha  < v \cdot \beta_0$. For it to be very strict, no negative point of   $\sigma(f)$ lies on the hyperplane, or equivalently, the inequality defining $\mathcal{S}^{-}(f) $ in \eqref{eq:Salg} is strict. 
Fig.~\ref{FIG3}(a) shows a strict separating vector.  

Enclosing vectors  \emph{enclose} all negative points of  $\sigma(f)$ between two parallel hyperplanes separated from the positive points, but points of both signs are allowed to be in the two hyperplanes.
For an enclosing vector $v$ to be strict, there must be positive points on the side of the hyperplanes not containing the negative points, that is, there exist $\alpha_1,\alpha_2\in \sigma_+(f)$ such that the inequalities in \eqref{eq:Ealg} are strict for that $v$ respectively.  See Fig.~\ref{FIG4}(a).

Enclosing and separating vectors order the exponents of $f_{v,x}$ in \eqref{Nota_IndFunc}, such that the negative and positive coefficients are grouped. This has the following consequences.

\begin{lemma}
\label{Lemma_BehavOffvx}
Let $f\colon \mathbb{R}^{n}_{>0} \to \mathbb{R}$ be a signomial and $x \in \R^n_{>0}$.
\begin{itemize}
\item[(i)] If $v \in \mathcal{E}^{-}(f)$, then there are  at most two sign changes in the coefficient sign sequence of the signomial $f_{v,x}$.  If $v$ is additionally strict, then both  the leading coefficient and the coefficient of smallest degree of $f_{v,x}$ are positive. 
\item[(ii)] If $v \in \mathcal{S}^{-}(f)$, then there is at most one sign change in the coefficient sign sequence of the signomial $f_{v,x}$.  If $v$ is strict, then the leading coefficient of $f_{v,x}$ is negative. 
\end{itemize}

Additionally if $f(x)<0$, then the following statements hold:
\begin{itemize}
\item[(i')] If $v \in \mathcal{E}^{-}(f)$, then there is a unique $\rho \in (1,\infty]$ such that $f_{v,x}(t) < 0$ for all $t\in [1,\rho)$ and $f_{v,x}(t) > 0$ for all $t > \rho$ (note that $\rho$ might be $\infty$).
\item[(ii'')] If $v \in \mathcal{S}^{-}(f)$, then    $f_{v,x}(t) < 0$ for all $t\in [1,\infty)$.
\end{itemize}

\end{lemma}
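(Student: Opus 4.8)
The plan is to translate the hypotheses on $v$ into combinatorial information about the coefficient sign sequence of the one-variable signomial $f_{v,x}$, and then quote Lemma~\ref{Lemma_OneDimSignomial}. Everything rests on the identity $f_{v,x}(t)=\sum_{\mu\in\sigma(f)}(c_\mu x^\mu)\,t^{v\cdot\mu}$: the exponents occurring in $f_{v,x}$ are the numbers $e\in\{v\cdot\mu:\mu\in\sigma(f)\}$, and the coefficient of $t^e$ equals $\sum_{\mu:\,v\cdot\mu=e}c_\mu x^\mu$. Since each $x^\mu>0$, if all $\mu$ with $v\cdot\mu=e$ have coefficient $c_\mu$ of one fixed sign, then the coefficient of $t^e$ is nonzero of that same sign. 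So the argument reduces to recording where the numbers $v\cdot\alpha$ (for $\alpha\in\sigma_+(f)$) and $v\cdot\beta$ (for $\beta\in\sigma_-(f)$) sit on the real line.

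For (ii), let $v\in\mathcal{S}^-(f)$ and put $a:=\max_{\alpha\in\sigma_+(f)}v\cdot\alpha$; by \eqref{eq:Salg} every positive point satisfies $v\cdot\alpha\le a$ and every negative point satisfies $v\cdot\beta\ge a$. Hence in $f_{v,x}$ the coefficient of $t^e$ is positive for $e<a$, negative for $e>a$, and of a priori undetermined sign only at the single exponent $e=a$. Reading nonzero coefficients by increasing exponent, the sign pattern is: a (possibly empty) block of positive coefficients, at most one coefficient of undetermined sign at $e=a$, then a (possibly empty) block of negative coefficients. This has at most one sign change, and any such change is from $+$ to $-$, so the leading coefficient is then negative. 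If $v$ is strict there is $\beta_0\in\sigma_-(f)$ with $v\cdot\beta_0>a$, so the top exponent of $f_{v,x}$ exceeds $a$ and is attained only by negative points, making the leading coefficient negative; this proves (ii). For (ii''), we additionally have $f_{v,x}(1)=f(x)<0$, so the coefficients of $f_{v,x}$ are not all positive; with the above this forces at most one sign change and a negative leading coefficient, and Lemma~\ref{Lemma_OneDimSignomial}(ii) gives $f_{v,x}(t)<0$ for all $t\in[1,\infty)$.

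For (i) and (i'), let $v\in\mathcal{E}^-(f)$. If $\sigma_-(f)=\emptyset$ then all coefficients of $f_{v,x}$ are positive and the claims are immediate, so assume $\sigma_-(f)\ne\emptyset$ and set $a:=\min_{\beta\in\sigma_-(f)}v\cdot\beta\le b:=\max_{\beta\in\sigma_-(f)}v\cdot\beta$. By \eqref{eq:Ealg} every negative point satisfies $a\le v\cdot\beta\le b$, while every positive point satisfies $v\cdot\alpha\le a$ or $v\cdot\alpha\ge b$. Thus the coefficient of $t^e$ in $f_{v,x}$ is positive for $e<a$ and for $e>b$, negative for $a<e<b$, and of undetermined sign only at $e=a$ and $e=b$; by increasing exponent the nonzero coefficients form a block of positive ones, a possible undetermined coefficient at $a$, a block of negative ones, a possible undetermined coefficient at $b$, and a block of positive ones, which has at most two sign changes. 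If $v$ is strict, there are $\alpha_1,\alpha_2\in\sigma_+(f)$ with $v\cdot\alpha_1<a$ and $v\cdot\alpha_2>b$, so the smallest and largest exponents of $f_{v,x}$ lie outside $[a,b]$ and are attained only by positive points: the coefficient of smallest degree and the leading coefficient of $f_{v,x}$ are both positive. This proves (i). For (i'), add $f(x)<0$, so $f_{v,x}(1)<0$ and $f_{v,x}\not\equiv 0$. If the leading coefficient of $f_{v,x}$ is positive, then, as there are at most two sign changes, Lemma~\ref{Lemma_OneDimSignomial}(i) yields the required unique $\rho\in(1,\infty)$. If the leading coefficient is negative, then the pattern above cannot contain a second sign change, since its top block would then be a nonempty block of positive coefficients; hence it has at most one sign change, and Lemma~\ref{Lemma_OneDimSignomial}(ii) gives $f_{v,x}(t)<0$ for all $t\ge 1$, so we take $\rho:=\infty$. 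In either case $\rho$ is the unique element of $(1,\infty]$ with the stated sign behaviour, which is (i').

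The main obstacle is not deep but requires care: at the extremal exponents $e=a$ and $e=b$ the contributions of positive and negative points may coincide, so these coefficients can have either sign or vanish by cancellation, and one has to check that these (at most two) exceptional exponents still only permit the claimed number of sign changes. Relatedly, applying Lemma~\ref{Lemma_OneDimSignomial} requires controlling the \emph{direction} of each sign change, not just the count, so that whenever the leading coefficient of $f_{v,x}$ is not positive there genuinely is at most one sign change. The degenerate configurations — $\sigma_+(f)$ or $\sigma_-(f)$ empty, or $\sigma(f)$ contained in a single level set of $v$ so that $f_{v,x}$ is a monomial — are absorbed into the same bookkeeping with no extra work.
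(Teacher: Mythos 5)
Your proposal is correct and follows essentially the same route as the paper: order the exponents of $f_{v,x}$ along $v$, read off the sign pattern $(+\dots+\,-\dots-\,+\dots+)$ (resp. $(+\dots+\,-\dots-)$), and invoke Lemma~\ref{Lemma_OneDimSignomial}. Your bookkeeping at the boundary exponents $e=a$ and $e=b$, where positive and negative points may aggregate into a coefficient of undetermined sign, is in fact more explicit than the paper's, and your direct one-sign-change argument in the negative-leading-coefficient case of (i') cleanly covers an edge case that the paper dispatches by asserting $v\in\mathcal{S}^{-}(f)$.
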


\begin{proof}
(i) and (i'). 
For $v \in \mathcal{E}^{-}(f)$, 
 $v$ orders the exponents $v\cdot \mu$ such that the sign sequence is $(+\dots+ - \dots - + \dots +)$, with potentially one or more of the three blocks of repeated signs not present.  The positive blocks are present if $v$ is strict by definition, showing (i).

For  $f(x)<0$, if the leading coefficient of $f_{v,x}$ is positive, then Lemma \ref{Lemma_OneDimSignomial}(i) gives the existence of a   unique $\rho \in (1,\infty)$ satisfying (i') in the statement. 
If the leading coefficient of $f_{v,x}$ is negative, then $v\in \mathcal{S}^{-}(f)$ and this case is covered next, and gives $\rho=\infty$.

\smallskip
(ii) and (ii'). From $v \in \mathcal{S}^{-}(f)$, it follows that the signomial $f_{v,x}$ has at most one sign change in its coefficient sequence,  as $ \max_{\alpha\in \sigma_{+}(f)}  v \cdot \alpha \leq \min_{\beta \in \sigma_{-}(f)} v \cdot \beta$. 
If $v$ is strict, then for at least one $\beta_{0} \in \sigma_{-}(f)$ we have $\max_{\alpha\in \sigma_{+}(f)}  v \cdot \alpha <  v \cdot \beta_{0}$, and hence the leading term is negative, showing (ii).
If $f_{v,x}(1) =f(x) < 0$,  $f_{v,x}$ must have some negative coefficient. Using $v \in \mathcal{S}^{-}(f)$, we conclude that the leading coefficient is negative and $v$ is strict. Lemma~\ref{Lemma_OneDimSignomial}(ii) gives now statement (ii').
\end{proof}

\begin{thm}
\label{Thm_OneExponent}
Let $f\colon \mathbb{R}^{n}_{>0} \to \mathbb{R}$ be a signomial. If at most one coefficient of $f$ is negative,  
then $f^{-1}(\R_{<0})$ is a logarithmically convex set. In particular,  $V_{>0}^c(f)$ has at most one negative connected component. 
\end{thm}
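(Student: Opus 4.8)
The plan is to pass to logarithmic coordinates and exhibit $\Log(f^{-1}(\R_{<0}))$ as a strict sublevel set of a convex function. If $f$ has no negative coefficient, then $f>0$ on $\R^{n}_{>0}$, so $f^{-1}(\R_{<0})=\emptyset$ and there is nothing to prove. Hence I assume $f$ has exactly one negative coefficient, say $c_{\beta}<0$ with $\beta\in\sigma(f)$, so that $f(x)=c_{\beta}x^{\beta}+\sum_{\alpha\in\sigma_{+}(f)}c_{\alpha}x^{\alpha}$ with every $c_{\alpha}>0$.

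Since $x^{\beta}>0$ on $\R^{n}_{>0}$, dividing $f$ by $x^{\beta}$ changes neither the zero set nor the sign of $f$ at any point; equivalently, one may apply Lemma~\ref{Lemma_TransInvariant} with $M=I$ and $v=-\beta$, which preserves $f^{-1}(\R_{<0})$. So I would replace $f$ by $x^{-\beta}f(x)$ and assume $\beta=0$ from the start, i.e. $f(x)=c_{0}+\sum_{\alpha\in\sigma_{+}(f)}c_{\alpha}x^{\alpha}$ with $c_{0}<0$ and all $c_{\alpha}>0$. Setting $y=\Log(x)$ for $x\in\R^{n}_{>0}$, so that $x^{\alpha}=e^{\alpha\cdot y}$, we get
\[ f(x)<0 \quad\Longleftrightarrow\quad \sum_{\alpha\in\sigma_{+}(f)}c_{\alpha}\, e^{\alpha\cdot y}<-c_{0} . \]
The left-hand side is a nonnegative linear combination of the functions $y\mapsto e^{\alpha\cdot y}$, each convex as a composition of the convex function $\exp$ with a linear map; hence it is a convex function of $y$. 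Its strict sublevel set $\{y\in\R^{n}\mid \sum_{\alpha}c_{\alpha} e^{\alpha\cdot y}<-c_{0}\}=\Log(f^{-1}(\R_{<0}))$ is therefore convex, which is exactly the assertion that $f^{-1}(\R_{<0})$ is logarithmically convex. (If $\sigma_{+}(f)=\emptyset$ the inequality reads $0<-c_{0}$, true everywhere, and $f^{-1}(\R_{<0})=\R^{n}_{>0}$ is still logarithmically convex.)

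For the final claim, a convex set is connected, so $\Log(f^{-1}(\R_{<0}))$ is connected or empty; since $\Log$ is a homeomorphism, the same holds for $f^{-1}(\R_{<0})$, which is moreover open. When nonempty it is thus a single connected component of $V_{>0}^{c}(f)$, on which $f$ is negative, so $V_{>0}^{c}(f)$ has at most one negative connected component.

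I do not expect a genuine obstacle: at heart this is the classical fact from geometric programming that a posynomial (a signomial all of whose coefficients are positive) becomes a convex function in the coordinates $y=\Log(x)$. The only points needing a little care are the reduction to $\beta=0$ (immediate, since $x^{-\beta}>0$) and the degenerate case $\sigma_{+}(f)=\emptyset$.
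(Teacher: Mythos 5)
Your proof is correct, but it takes a genuinely different route from the paper's. The paper proves logarithmic convexity segment by segment: for $x,y\in f^{-1}(\R_{<0})$ it sets $v:=\Log(y)-\Log(x)$, observes that any $v$ is automatically an enclosing vector of $\sigma(f)$ when there is at most one negative point, and then invokes the univariate Descartes-type statement of Lemma~\ref{Lemma_BehavOffvx}(i') to conclude that $f$ is negative along the whole path $\gamma_{v,x}$ on $[1,e]$, i.e.\ that the segment from $\Log(x)$ to $\Log(y)$ stays inside $\Log(f^{-1}(\R_{<0}))$. You instead normalize the unique negative monomial to the constant term (multiplying by $x^{-\beta}>0$, which leaves the set $f^{-1}(\R_{<0})$ literally unchanged) and identify $\Log(f^{-1}(\R_{<0}))$ with the strict sublevel set $\{y\in\R^n \mid \sum_{\alpha}c_{\alpha}e^{\alpha\cdot y}<-c_{0}\}$ of a convex function, using the classical geometric-programming fact that posynomials are convex in logarithmic coordinates. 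Your argument is more self-contained --- it bypasses the enclosing-vector machinery and the univariate Descartes' rule entirely --- and is closer in spirit to the convexification technique of Section~\ref{Convexification}; it also yields slightly more, exhibiting the region as a sublevel set of a globally defined smooth convex function. What the paper's route buys is uniformity: it places Theorem~\ref{Thm_OneExponent} inside the same separating/enclosing-vector framework that drives Theorems~\ref{Thm_StrictSepConnected} and~\ref{Thm_StripAroundPosExp}. Your treatment of the degenerate cases ($\sigma_{-}(f)=\emptyset$ and $\sigma_{+}(f)=\emptyset$) and the passage from convexity of the log-image to ``at most one negative connected component'' are both fine.
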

\begin{proof}
Let $x, y \in f^{-1}(\R_{<0})$, define $v := \Log(y) - \Log(x)$, and let $e$ denote Euler's number.
Since $f$ has at most one negative coefficient,  $v$ is an enclosing vector, c.f. Definition~\ref{Def_Strip}(ii). Since $f_{v,x}(1) =  f(x)  < 0$ and $f_{v,x}(e) = f(y) < 0$, Lemma \ref{Lemma_BehavOffvx}(i') implies that $f_{v,x}(t) < 0$ for all $t\in [1,e]$ and 
hence $\gamma_{v,x}(t)\in f^{-1}( \R_{<0})$ for $t\in [1,e]$. Applying $\Log$, equality \eqref{eq:Log} gives that 
$\tau_{v,\Log(x)}(s) \in \Log( f^{-1}( \R_{<0}) )$ for all $s\in [0,1]$. As $\tau_{v,\Log(x)}$ in the interval $[0,1]$ is simply the line segment joining $\Log(x)$ and $\Log(y)$, $\Log( f^{-1}( \R_{<0}) )$ is convex. This concludes the proof. 
\end{proof}

We will now show that the existence of one strict separating vector implies that $V^c_{>0}(f)$ has at most one negative connected component, which in addition is contractible. To this end, we need an auxiliary proposition, that states that the existence of 
one very strict separating vector is enough to guarantee that there is a basis of very strict separating vectors. The idea is simply that the property of being a very strict separating vector is robust under small perturbations.

For a finite collection of vectors $w_{1}, \dots ,w_{k} \in \mathbb{R}^{n}$ we write
\begin{align}
\label{Eq_DefCone}
\Cone(w_{1}, \dots, w_{k}) := \Big\{ \sum_{i=1}^{k} \lambda_{i} w_{i} \mid \lambda_{1}, \dots , \lambda_{k} \in \mathbb{R}_{\geq 0} \Big\}
\end{align} 
for the convex cone generated by $w_{1}, \dots ,w_{k}$. If $w_{1}, \dots ,w_{k}$ are linearly independent, then the relative interior of $\Cone(w_{1}, \dots w_{k})$ is given by
\begin{align}
\label{Eq_IntCone}
\Cone^{\circ}(w_{1}, \dots, w_{k}) = \Big\{ \sum_{i=1}^{k} \lambda_{i} w_{i} \mid \lambda_{1}, \dots , \lambda_{k} \in \mathbb{R}_{> 0} \Big\}.
\end{align}

\begin{prop}\label{prop_basis}
Let $f\colon \mathbb{R}^{n}_{>0} \to \mathbb{R}$ be a signomial and $v \in \mathbb{R}^{n}$ a very strict separating vector of $\sigma(f)$. 
Then there exists a basis $\{w_1,\dots,w_n\}$ of $\R^n$ consisting of very strict separating vectors,  and a constant $c \in \mathbb{R}$ such that 
\begin{align}
\label{eq:prop_basis}
&\sigma_-(f) \subseteq \mathcal{H}_{w_i,c}^{+}, \qquad  \sigma_+(f) \subseteq \mathcal{H}_{w_i,c}^{-} \qquad \text{for every} \quad  i \in \{1,\dots,n\}, \\
\label{eq:prop_intcone}
& v \in \Cone^{\circ}(w_{1}, \dots ,w_{n}).
\end{align}
\end{prop}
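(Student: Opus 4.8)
The plan is to exploit the fact that ``very strict separating'' is an open condition. By hypothesis, $v$ satisfies the strict inequality
\[
 a := \max_{\alpha\in \sigma_{+}(f)} v\cdot \alpha \;<\; \min_{\beta\in \sigma_{-}(f)} v\cdot \beta =: b,
\]
since $v$ is very strict (no negative point lies on $\mathcal H_{v,a}$ forces the separating inequality to be strict). Fix $c$ in the open interval $(a,b)$, so that $v\cdot\alpha < c < v\cdot\beta$ for all $\alpha\in\sigma_+(f)$ and $\beta\in\sigma_-(f)$. Now observe that the set
\[
 \mathcal U := \Big\{ w\in\R^n \;\Big|\; \max_{\alpha\in\sigma_+(f)} w\cdot\alpha < c < \min_{\beta\in\sigma_-(f)} w\cdot\beta \Big\}
\]
is nonempty (it contains $v$) and open, being the intersection of finitely many open half-spaces $\{w\mid w\cdot\alpha < c\}$ and $\{w\mid w\cdot\beta > c\}$ indexed by $\alpha\in\sigma_+(f)$ and $\beta\in\sigma_-(f)$. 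Moreover every $w\in\mathcal U$ is a very strict separating vector realizing the constant $c$: indeed $\sigma_+(f)\subseteq\mathcal H_{w,c}^-$, $\sigma_-(f)\subseteq\mathcal H_{w,c}^+$, no negative point lies on $\mathcal H_{w,c}$, and strictness holds because $\sigma_-(f)$ is nonempty (if it were empty, $f$ would have no negative coefficient and $V_{>0}^c(f)$ trivially has no negative component, so we may assume $\sigma_-(f)\neq\emptyset$) and sits strictly inside $\mathcal H_{w,c}^{+,\circ}$.

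Next I would use openness of $\mathcal U$ to produce the desired basis contained in the interior of a simplicial cone around $v$. Since $\mathcal U$ is open and contains $v$, it contains an open ball $B(v,\varepsilon)$. Pick any basis $\{u_1,\dots,u_n\}$ of $\R^n$ with $\|u_i\|$ small enough that $v+u_i\in B(v,\varepsilon)\subseteq\mathcal U$ for each $i$; this is possible by simply scaling the standard basis. Set $w_i := v+u_i$. Then each $w_i\in\mathcal U$, so each $w_i$ is a very strict separating vector realizing the common constant $c$, which gives \eqref{eq:prop_basis}. The vectors $w_1,\dots,w_n$ form a basis of $\R^n$: the matrix with columns $w_i = v + u_i$ differs from the invertible matrix with columns $u_i$ by a rank-one perturbation, and by choosing the $u_i$ small we keep the determinant away from zero — more simply, if one prefers, choose the $u_i$ so that $\{w_1,\dots,w_n\}$ is a small perturbation of a fixed basis, using that $\GL_n(\R)$ is open. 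Finally, $v = \tfrac1n\sum_{i=1}^n w_i - \tfrac1n\sum_{i=1}^n u_i$; a cleaner choice is to take the $u_i$ symmetric, e.g. arrange $w_1,\dots,w_n$ so that $v$ is their barycenter, or just note $v = \sum \lambda_i w_i$ with all $\lambda_i>0$: concretely, since $v\in\mathcal U$ is interior and $\mathcal U$ is open, we can choose $w_1,\dots,w_n\in\mathcal U$ linearly independent with $v$ a strictly positive combination of them, e.g. $w_i = v + \varepsilon e_i$ for $i<n$ and $w_n = v - \varepsilon(e_1+\dots+e_{n-1})$ with $\varepsilon$ small; then $v = \tfrac1n\sum_i w_i\in\Cone^\circ(w_1,\dots,w_n)$ by \eqref{Eq_IntCone}, giving \eqref{eq:prop_intcone}.

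The only genuine subtlety is the bookkeeping needed to get \emph{both} conclusions simultaneously: the $w_i$ must be a basis, must all lie in the single open set $\mathcal U$ (so that they share one constant $c$), and must have $v$ in the open simplicial cone they generate. The construction $w_i = v+\varepsilon e_i$ for $i=1,\dots,n-1$ and $w_n = v - \varepsilon(e_1+\cdots+e_{n-1})$ handles all three at once for $\varepsilon$ small: linear independence is a determinant computation that is nonzero for all $\varepsilon\neq 0$, membership in $\mathcal U$ follows from openness for $\varepsilon$ small, and $v=\tfrac1n\sum w_i$ exhibits $v$ in the relative interior via \eqref{Eq_IntCone}. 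I expect this to be the main (mild) obstacle — not a deep point, just making sure the explicit perturbation does all the work. Passing from ``very strict separating realizing $c$'' to the statement's phrasing is immediate from Definition~\ref{Def_Strip}(i).
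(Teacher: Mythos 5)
Your strategy is the same as the paper's: exploit that the very--strict--separating condition is open and obtain the basis by perturbing $v$ in $n$ directions. The first half of your argument is correct and in fact cleaner than the paper's: introducing the open set $\mathcal U$ attached to a fixed $c\in(a,b)$ immediately gives that all the perturbed vectors are very strict separating vectors sharing the \emph{same} constant $c$, whereas the paper reaches the same conclusion through explicit estimates involving $K=\min_i\min_{\mu}v_i\cdot\mu$ and $L=\max_i\max_{\mu}v_i\cdot\mu$.

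The gap is in the explicit construction of the basis. Take $w_i=v+\varepsilon e_i$ for $i<n$ and $w_n=v-\varepsilon(e_1+\cdots+e_{n-1})$, and expand the determinant of the matrix with columns $w_1,\dots,w_n$ multilinearly: every term containing $v$ in two or more columns vanishes, and the pure $\varepsilon$-term vanishes because $e_1,\dots,e_{n-1},-(e_1+\cdots+e_{n-1})$ are linearly dependent; what remains is $n\,\varepsilon^{n-1}v_n$. So whenever $v_n=0$ --- which the hypotheses do not exclude --- all $n$ vectors $w_i$ lie in the coordinate hyperplane $x_n=0$ and cannot form a basis, and your claim that the determinant ``is nonzero for all $\varepsilon\neq0$'' fails. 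The earlier justification (``rank-one perturbation of an invertible matrix\dots keeps the determinant away from zero'') is also unsound: for small $u_i$ the determinant of $(u_1|\cdots|u_n)$ is itself small, and the rank-one term $v\mathbf{1}^{T}$ can cancel it exactly. The repair is exactly what the paper does: first choose a genuine basis $v_1,\dots,v_n$ of $\R^n$ with $v=\sum_i\lambda_iv_i$ and all $\lambda_i>0$ (possible since $v\neq0$), and set $w_i=v+\epsilon_iv_i$ with the $\epsilon_i>0$ small enough that $w_i\in\mathcal U$. In the coordinates of that basis the matrix of $(w_1,\dots,w_n)$ is $\lambda\mathbf{1}^{T}+\mathrm{diag}(\epsilon)$, whose determinant $\bigl(\prod_i\epsilon_i\bigr)\bigl(1+\sum_i\lambda_i/\epsilon_i\bigr)$ is strictly positive, and solving $v=\sum_i\mu_iw_i$ gives $\mu_i=\lambda_i/\bigl(\epsilon_i(1+\sum_j\lambda_j/\epsilon_j)\bigr)>0$, so $v\in\Cone^{\circ}(w_1,\dots,w_n)$. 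With that replacement your proof is complete.
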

\color{black}

\begin{proof}
Define 
\[  a:= \max\limits_{\alpha \in \sigma_{+}(f)}\  v \cdot \alpha, \qquad b:= \min\limits_{\beta \in \sigma_{-}(f)} v \cdot \beta, \qquad c := \tfrac{a+b}{2}. \]
As $v\in \mathcal{S}^-(f)$, $\sigma_-(f) \subseteq \mathcal{H}_{v,c}^{+}$ and $\sigma_+(f) \subseteq \mathcal{H}_{v,c}^{-}$ by \eqref{eq:Salg}. Since $v$ is very strict, we have $b> c >a$.

Choose a basis $\{v_{1}, \dots ,v_{n} \}$ of $\mathbb{R}^{n}$ such that $v \in \Cone^{\circ}(v_{1},\dots ,v_{n})$. By \eqref{Eq_IntCone}  this is equivalent to the existence of $\lambda_{1}, \dots ,\lambda_{n} \in \mathbb{R}_{>0}$ such that $v = \sum_{i=1}^{n} \lambda_{i} v_{i}$. 
For this basis, we define
\[ K := \min_{i=1,\dots,n}\ \min\limits_{\mu \in \sigma(f)} v_{i} \cdot \mu, \qquad L := \max_{i=1,\dots,n}\ \max\limits_{\mu \in \sigma(f)} v_{i} \cdot \mu.\]

In the following, we show that it is possible to choose $\epsilon_{i} >0$ such that the vectors $w_{i} := v + \epsilon_{i} \, v_{i}$, for $i = 1, \dots , n$, with the given $c$ satisfy \eqref{eq:prop_basis}. 
For  $\beta \in \sigma_{-}(f)$ and  $i \in \{ 1, \dots , n\}$, using that $v_{i} \cdot \beta \geq K$ and $v \cdot \beta\geq b$, it holds that 
\begin{align}\label{eq:beta} w_{i} \cdot \beta = v \cdot \beta + \epsilon_{i} \, (v_{i} \cdot \beta) \geq b + \epsilon_{i} K  \  \begin{cases} 
\geq b > c & \textrm{if } K \geq 0 \text{ and for } \epsilon_{i} >0, \\
> b +  \tfrac{a-b}{2K} K = c  & \textrm{if } K<0 \text{ and for } 0 < \epsilon_{i} < \tfrac{a-b}{2K}.
\end{cases} \end{align}
Similarly, for every $\alpha \in \sigma_{+}(f)$ and  $i \in \{1, \dots , n\}$, it follows that 
\begin{align}\label{eq:alpha} w_{i} \cdot \alpha = v \cdot \alpha + \epsilon_{i}\, (v_{i} \cdot \alpha) \leq a + \epsilon_{i} L  \  \begin{cases} 
\leq a < c & \textrm{if } L \leq 0 \text{ and for } \epsilon_{i}>0, \\
< a +  \tfrac{b-a}{2L} L = c  & \textrm{if } L>0 \text{ and for } 0 < \epsilon_{i} <\tfrac{b-a}{2L}.
\end{cases} \end{align}

Therefore, there exists an $\epsilon >0$ such that $w_{i}$ satisfies \eqref{eq:beta} and \eqref{eq:alpha} for all $0 < \epsilon_{i} < \epsilon$ and $i \in \{1, \dots , n\}$. Hence for sufficiently small $\epsilon_{1}, \dots ,\epsilon_{n}$ the vectors $w_{1}, \dots , w_{n}$ are very strict separating vectors satisfying \eqref{eq:prop_basis}.

To obtain \eqref{eq:prop_intcone}, we specify a choice of $\epsilon_{1}, \dots ,\epsilon_{n}$. For each $i \in \{ 1, \dots , n\}$, choose $p_{i} >0$ such that $\epsilon_{i} := \tfrac{\lambda_{i}}{p_{i}} < \epsilon$ and define $P := \sum_{i=1}^{n} p_{i}$. By construction, we have that
\[ \sum_{i = 1}^{n} \tfrac{p_{i}}{P+1} w_{i} = \sum_{i = 1}^{n} \tfrac{p_{i}}{P+1}(v+\tfrac{\lambda_{i}}{p_{i}}v_{i}) = \tfrac{P}{P+1} v + \tfrac{1}{P+1} \sum_{i = 1}^{n} \lambda_{i}v_{i} = v, \]
which gives that $v\in \Cone^{\circ}(w_{1}, \dots , w_{n}).$

Finally, since $v$ is a positive linear combination of $v_{1}, \dots ,v_{n}$ and $\epsilon_{1}, \dots, \epsilon_{n}$ are positive, an easy linear algebra argument shows that $w_{1}, \dots ,w_{n}$ form a basis of $\mathbb{R}^{n}$.
\end{proof}

\begin{thm}
\label{Thm_StrictSepConnected}
Let $f\colon \mathbb{R}^{n}_{>0} \to \mathbb{R}$ be a signomial. If there exists a strict separating vector of $\sigma(f)$, then 
\begin{itemize}
\item[(i)] $f^{-1}(\R_{<0})$  is non-empty and contractible.
\item[(ii)]  The closure of $f^{-1}(\R_{<0})$ equals $f^{-1}(\R_{\leq 0})$. 
\end{itemize}
In particular,  $V_{>0}^c(f)$ has at most one negative connected component. 
\end{thm}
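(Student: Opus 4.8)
The plan is to expose the strict separating direction as a coordinate axis and thereby present $f^{-1}(\R_{<0})$ as the strict epigraph of a continuous function, which is contractible.

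\emph{Reduction.} If $\sigma_+(f)=\emptyset$ then $f<0$ on all of $\R^n_{>0}$ and the statement is immediate; so assume $\sigma_+(f)\neq\emptyset$. Let $w$ be a strict separating vector of $\sigma(f)$. Since $\sigma_+(f)\neq\emptyset$, strictness forces $w\neq 0$, so we may pick $M\in\GL_n(\R)$ whose $n$-th row is $w^{\top}$. By Lemma~\ref{Lemma_TransInvariant} (with translation vector $0$), replacing $f$ by $F_{M,0,f}$ changes neither the homeomorphism type of $f^{-1}(\R_{<0})$ nor the number of negative connected components, and, using $e_n\cdot M\mu=w\cdot\mu$, transforms $w$ into $e_n=(0,\dots,0,1)$, which is then a strict separating vector of the new support. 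So from now on assume $e_n$ itself is a strict separating vector of $\sigma(f)$.

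\emph{Fibering over the last coordinate.} Write $x=(\bar x,x_n)$ with $\bar x\in\R^{n-1}_{>0}$, and group the monomials of $f$ by the exponent of $x_n$: $f(x)=\sum_{j=1}^k g_j(\bar x)\,x_n^{d_j}$ with $d_1<\dots<d_k$ and each $g_j$ a signomial in $\bar x$. Since $e_n\in\mathcal S^-(f)$, every positive point has $x_n$-exponent at most that of every negative point; thus every level $d_j<\max_{\alpha\in\sigma_+(f)}\alpha_n$ is purely positive ($g_j>0$ on $\R^{n-1}_{>0}$), every level $d_j>\min_{\beta\in\sigma_-(f)}\beta_n$ is purely negative ($g_j<0$), and at most one level lies strictly in between, necessarily equal to $\max_{\alpha}\alpha_n=\min_{\beta}\beta_n$, where $g_j$ may have mixed signs. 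Hence, for every fixed $\bar x$, the coefficient sign sequence $(\operatorname{sgn}g_1(\bar x),\dots,\operatorname{sgn}g_k(\bar x))$ has at most one sign change. Moreover, because $e_n$ is \emph{strict}, some negative point has $x_n$-exponent exceeding $\max_\alpha\alpha_n$, so the top level $d_k$ is purely negative and $g_k(\bar x)<0$ for all $\bar x$. By Descartes' rule of signs (cf.\ Lemma~\ref{Lemma_OneDimSignomial}), the univariate signomial $x_n\mapsto f(\bar x,x_n)$, having negative leading coefficient and at most one sign change, is either negative on all of $(0,\infty)$ or has a unique positive root $\phi(\bar x)$, being positive on $(0,\phi(\bar x))$ and negative on $(\phi(\bar x),\infty)$. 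Setting $\phi(\bar x):=0$ in the first case, we obtain a function $\phi\colon\R^{n-1}_{>0}\to\R_{\geq 0}$ with $f^{-1}(\R_{<0})=\{(\bar x,x_n)\in\R^n_{>0}\mid x_n>\phi(\bar x)\}$ and $f^{-1}(\R_{\leq 0})=\{(\bar x,x_n)\mid x_n\geq\phi(\bar x)\}$.

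\emph{Conclusion, modulo continuity of $\phi$.} If $\phi$ is continuous, then $(\bar x,x_n)\mapsto(\bar x,x_n-\phi(\bar x))$ is a homeomorphism of $f^{-1}(\R_{<0})$ onto $\R^{n-1}_{>0}\times\R_{>0}$; this set is nonempty and convex, giving (i), its closure is $\{x_n\geq\phi(\bar x)\}=f^{-1}(\R_{\leq 0})$, giving (ii), and it is connected, giving the ``in particular''. Pulling back through the monomial change of variables (and, if one prefers to argue in logarithmic coordinates, through $\Log$) — all homeomorphisms — yields the statement for the original $f$. The one point that needs care, and which I expect to be the main obstacle, is the continuity of $\phi$: where $\phi(\bar x)>0$ the root is simple (a single sign change gives, via the Gauss parity refinement of Descartes' rule, exactly one positive root counted with multiplicity), so the implicit function theorem gives local continuity there; the delicate part is the boundary of $\{\phi>0\}$, where one must show $\phi(\bar x')\to 0$ as $\bar x'\to\bar x$ with $\phi(\bar x)=0$ and glue continuously with the region $\{\phi=0\}$. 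I would handle this by establishing (ii) directly — $f^{-1}(\R_{<0})$ is open and $f^{-1}(\R_{\leq 0})$ is closed, and for each $\bar x$ the ``bad set'' $\{x_n>0\mid f(\bar x,x_n)\geq 0\}$ is either empty or bounded (as $g_k(\bar x)<0$ forces $f(\bar x,x_n)\to-\infty$) — and then deducing continuity of $\phi$ from the identities $\overline{\{x_n>\phi(\bar x)\}}=\{x_n\geq\phi(\bar x)\}=f^{-1}(\R_{\leq 0})$; the harmless degenerate cases where some $g_j(\bar x)$ vanishes are absorbed into this, since $g_k(\bar x)$ never vanishes.
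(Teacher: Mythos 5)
Your argument is sound and takes a genuinely different route from the paper. The paper first deletes the negative exponents lying on the separating hyperplane, upgrades the strict separating vector to a whole basis of very strict ones (Proposition~\ref{prop_basis}), and then retracts $\Log(f^{-1}(\R_{<0}))$ onto a translated simplicial cone by flowing along the separating direction; you instead use Lemma~\ref{Lemma_TransInvariant} to make the separating direction a coordinate axis and exhibit $f^{-1}(\R_{<0})$ as the strict epigraph $\{x_n>\phi(\bar x)\}$ of a function on $\R^{n-1}_{>0}$. Your fiberwise analysis is correct: with $e_n$ a separating vector, each level in $x_n$ below $\min_\beta\beta_n$ is purely positive, each level above $\max_\alpha\alpha_n$ is purely negative, only a single coincident level can be mixed, and strictness makes the top level purely negative; hence each fiber signomial has sign pattern $(+\cdots+-\cdots-)$ or $(-\cdots-)$ and, by the univariate Descartes' rule with real exponents, is positive on $(0,\phi(\bar x))$ and negative on $(\phi(\bar x),\infty)$ (with $\phi(\bar x)=0$ allowed). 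Your approach buys a stronger conclusion — $f^{-1}(\R_{<0})$ is homeomorphic to an open convex set — and avoids Proposition~\ref{prop_basis} entirely, at the cost of requiring continuity of $\phi$; the paper's retraction argument never needs such a function but yields only contractibility.

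The one point you leave open, continuity of $\phi$, is a genuine gap in the write-up but closes in a few lines, more directly than via your proposed detour through (ii). Both semicontinuities follow from the strict sign behavior on either side of the root: if $t>\phi(\bar x)$ then $f(\bar x,t)<0$, so $f(\bar y,t)<0$ and hence $\phi(\bar y)<t$ for $\bar y$ near $\bar x$, giving $\limsup_{\bar y\to\bar x}\phi(\bar y)\le\phi(\bar x)$; if $0<t<\phi(\bar x)$ then $f(\bar x,t)>0$, so $f(\bar y,t)>0$ and hence $\phi(\bar y)\ge t$ nearby, giving $\liminf_{\bar y\to\bar x}\phi(\bar y)\ge\phi(\bar x)$ (trivially so when $\phi(\bar x)=0$). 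No appeal to the implicit function theorem or to simplicity of the root is needed. Note also that statement (ii) already follows from the fiber analysis alone, without continuity of $\phi$: any zero $(\bar x,\phi(\bar x))$ is the limit of $(\bar x,\phi(\bar x)+\tfrac1m)\in f^{-1}(\R_{<0})$. Two cosmetic points: when the top exponent $d_k$ is nonpositive one has $f(\bar x,x_n)\to g_k(\bar x)\le 0$ or $0^-$ rather than $-\infty$, but the needed conclusion ($f<0$ for large $x_n$, since $f(\bar x,t)/t^{d_k}\to g_k(\bar x)<0$) survives; and the purely positive levels are exactly those with $d_j<\min_\beta\beta_n$, which is slightly more than the $d_j<\max_\alpha\alpha_n$ you state, though this does not affect the count of sign changes.
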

\begin{proof} 
Let $v \in \mathcal{S}^{-}(f)$ be a strict separating vector. Define 
\[  a:= \max\limits_{\alpha \in \sigma_{+}(f)}\  v \cdot \alpha,  \quad\textrm{and}\quad
M := \{ \beta \in \sigma_{-}(f) \mid v \cdot \beta = a\} = \sigma_-(f) \cap \mathcal{H}_{v,a}.
\]
Since $v$ is a strict separating vector, 
$\sigma_{-}(f) \setminus M\neq \emptyset$. Consider the restriction of $f$ to $\sigma(f) \setminus M$, c.f. \eqref{eq:res}: 
\begin{align*}
\tilde{f} := f_{|\sigma(f) \setminus M}.
\end{align*}
As $\tilde{f}$ is obtained from $f$ only by removing monomials with negative coefficients,   $f(x) \leq \tilde{f}(x)$ for all $x \in \mathbb{R}^{n}_{>0}$ and hence $\tilde{f}^{-1}(\R_{<0}) \subseteq f^{-1}(\R_{<0})$.
By construction 
$\sigma_-(\tilde{f})\neq \emptyset$, and
 $v$ is also a strict separating vector of $\sigma(\tilde{f})$, which additionally satisfies
\[   \max\limits_{\alpha \in \sigma_{+}(\tilde{f})}\  v \cdot \alpha < \min\limits_{\beta \in \sigma_{-}(\tilde{f})} v \cdot \beta. \]
Hence, $v$ is a very strict separating vector of $\sigma(\tilde{f})$. 
Note that for any $x\in \mathbb{R}^{n}_{>0}$, 
 the leading coefficient of 
$\tilde{f}_{v,x}$ is negative  by Lemma~\ref{Lemma_BehavOffvx}(ii), and hence $\tilde{f}^{-1}(\R_{<0}) \neq  \emptyset$. It follows that $f^{-1}(\R_{<0}) \neq  \emptyset$ as well.

We show that $f^{-1}(\R_{<0})$  is  contractible, by showing that this is the case for $\Log(f^{-1}(\R_{<0}))$.
First, note that by Proposition~\ref{prop_basis}, there exists a basis $\{w_1,\dots,w_n\}$ of $\R^n$, consisting of very strict separating vectors of $\sigma(\tilde{f})$ such that $v$ can be written as 
\begin{equation}\label{eq:vcone}
v = \sum_{i=1}^{n} \lambda_{i} w_{i} \qquad \text{for some} \quad \lambda = (\lambda_{1}, \dots , \lambda_{n}) \in \mathbb{R}_{>0}^{n}.\end{equation}
To show that $\Log(f^{-1}(\R_{<0}))$ is contractible, we will show that for any $\xi   \in \Log(\tilde{f}^{-1}(\mathbb{R}_{<0}))$, it holds  that $\xi + \Cone(w_{1}, \dots, w_{n})$ is a strong deformation retract of $\Log(f^{-1}(\mathbb{R}_{<0}))$. 
As $\xi + \Cone(w_{1}, \dots w_{n})$ is contractible, this will conclude the proof of (i), c.f. \cite{Hatcher_book}.

To this end, fix $x\in\tilde{f}^{-1}(\mathbb{R}_{<0})$ and let $\xi = \Log(x)$. 
For $w\in \mathcal{S}^{-}(\tilde{f})$, the path $\gamma_{w,x}$ is contained in $\tilde{f}^{-1}( \mathbb{R}_{<0})$ by Lemma~\ref{Lemma_BehavOffvx}(ii'). Hence, by equality \eqref{eq:Log}, the path $\tau_{w,\xi}$ is contained in $\Log(\tilde{f}^{-1}(\mathbb{R}_{<0}))$. 
 In particular, it holds that  $\xi+w \in \Log( \tilde{f}^{-1}( \mathbb{R}_{<0}) )$ for all $w \in \mathcal{S}^{-}(\tilde{f})$.
As $\mathcal{S}^{-}(\tilde{f})$ is a convex cone and contains $w_{1}, \dots , w_{n}$, we have $\Cone(w_{1}, \dots, w_{n})\subseteq \mathcal{S}^{-}(\tilde{f})$ \cite[Ch. 1]{Ziegler_book}. It follows that $\xi + \Cone(w_{1}, \dots, w_{n}) \subseteq \Log(\tilde{f}^{-1}(\mathbb{R}_{<0}))\subseteq \Log(f^{-1}(\mathbb{R}_{<0}))$.

\smallskip
We now construct a homotopy map giving that $\xi + \Cone(w_{1}, \dots, w_{n})$ is a strong deformation retract of 
$ \Log(f^{-1}(\mathbb{R}_{<0}))$. 
To this end, we consider the map $s^*\colon \mathbb{R}^{n} \to \mathbb{R}_{\geq 0}$ defined by
\[ s^*(\zeta)=\min \{s\in \mathbb{R}_{\geq 0} \mid \zeta + s\, v \ \in \,   \xi +  \Cone(w_{1}, \dots , w_{n}) \}. \] 
To see that $s^*$ is well defined and continuous, we note that 
\[  s^*(\zeta) =   \max \Big\{0,-\tfrac{ (W^{-1}(\zeta- \xi))_1}{\lambda_{1}},\dots,-\tfrac{(W^{-1}(\zeta- \xi))_n}{\lambda_{n}}\Big\},\]
where $W\in \R^{n\times n}$ is the matrix of the linear isomorphism that sends the $i$-th standard basis vector of $\mathbb{R}^{n}$ to $w_{i}$, and $\lambda_{1}, \dots ,\lambda_{n} >0$ are from \eqref{eq:vcone}.

Consider the following continuous map
\begin{align}
\rho  \colon [0,1] \times \Log(f^{-1}(\mathbb{R}_{<0})) \to \Log(f^{-1}(\mathbb{R}_{<0})), \quad (t,\zeta) \mapsto \zeta + t \, s^*(\zeta) \, v.
\end{align}
Since $v$ is a strict separating vector of $\sigma(f)$, from  Lemma~\ref{Lemma_BehavOffvx}(ii') follows that $\rho(t,\zeta) \in \Log(f^{-1}(\mathbb{R}_{<0}))$ for all $(t,\zeta)  \in [0,1] \times \Log(f^{-1}(\mathbb{R}_{<0}))$. 
Clearly,  $\rho(0,\cdot)$ is the identity map, and by definition of $s^*$,  $\rho(1,\zeta) \in \xi + \Cone(w_{1}, \dots , w_{n})$
for all $\zeta \in \Log(f^{-1}(\mathbb{R}_{<0}))$. Furthermore, if $\zeta \in \xi + \Cone(w_{1}, \dots , w_{n})$, then $s^*(\zeta)=0$ and $\rho(t,\zeta) = \zeta$ for all $t\in [0,1]$. 

We conclude that $\rho$ is a homotopy showing that  $\xi + \Cone(w_{1}, \dots , w_{n})$ is a strong deformation retract of $\Log(f^{-1}(\mathbb{R}_{<0}))$. This implies (i).

 \smallskip 
Finally, we show statement (ii). Let $x \in f^{-1}(\{0\})$. Since $v \in \mathcal{S}^{-}(f)$ and strict,   Lemma~\ref{Lemma_BehavOffvx}(ii) gives that $f_{v,x}(t) < 0$ for all $t > 1$. 
Thus the sequence $\big( \gamma_{v,x}(1+\tfrac{1}{n})\big)_{n \in \mathbb{N}}$ belongs to $f^{-1}( \R_{<0} )$. As $\gamma_{v,x}$ is continuous and $\gamma_{v,x}(1) = x$, the sequence $\big( \gamma_{v,x}(1+\tfrac{1}{n})\big)_{n \in \mathbb{N}}$ converges to $x$. So  each $x \in f^{-1}( \R_{\leq 0})$ is the limit of a convergent sequence in $f^{-1}( \R_{<0} )$. Hence $f^{-1}( \R_{\leq 0}) \subseteq \overline{f^{-1}( \R_{<0} )}$. The other inclusion is clear by the continuity of $f$. 
\end{proof}

\begin{figure}
\centering
\begin{minipage}[h]{0.3\linewidth}

\centering
\includegraphics[scale=0.5]{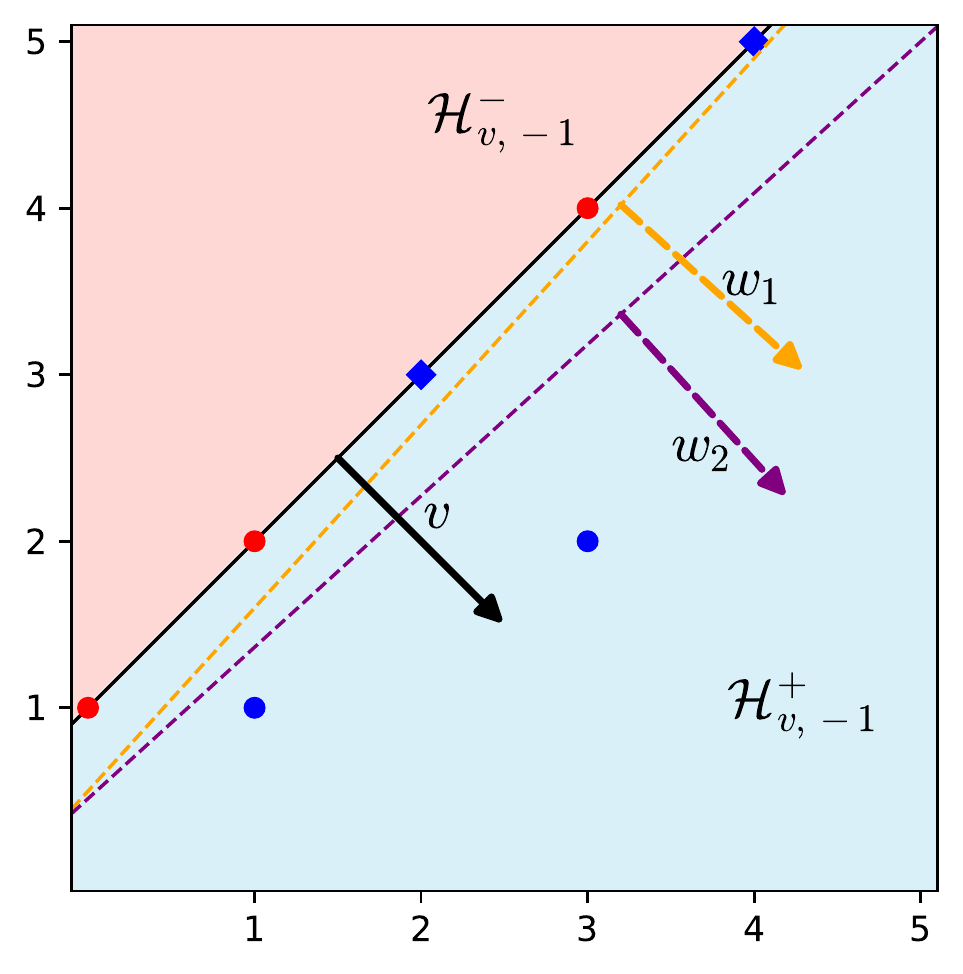}

{\small (a)}

\end{minipage}

\begin{minipage}[h]{0.45\linewidth}

\centering
\includegraphics[scale=0.5]{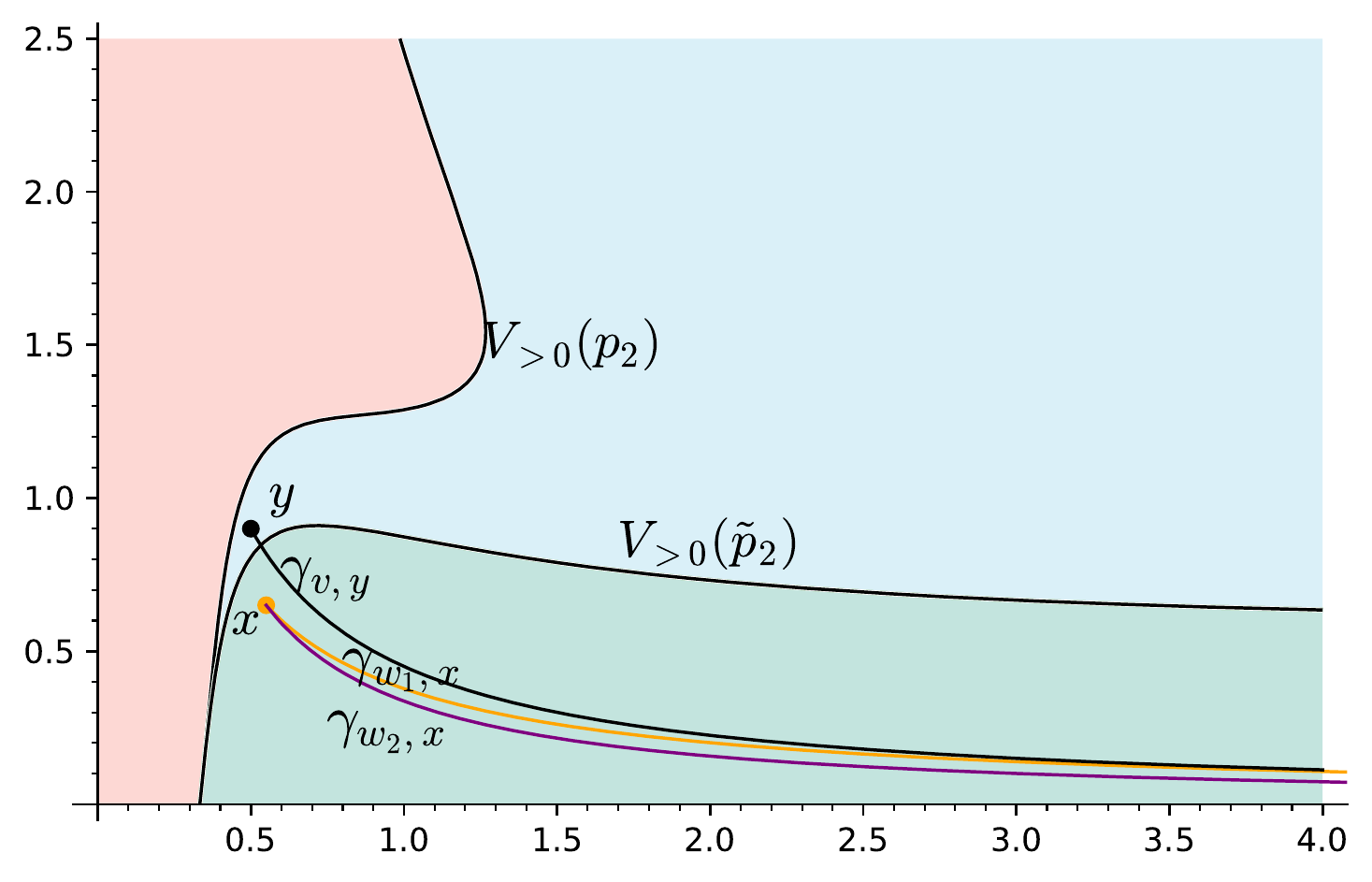}

{\small (b)}
\end{minipage}
\begin{minipage}[h]{0.45\linewidth}

\centering
\includegraphics[scale=0.5]{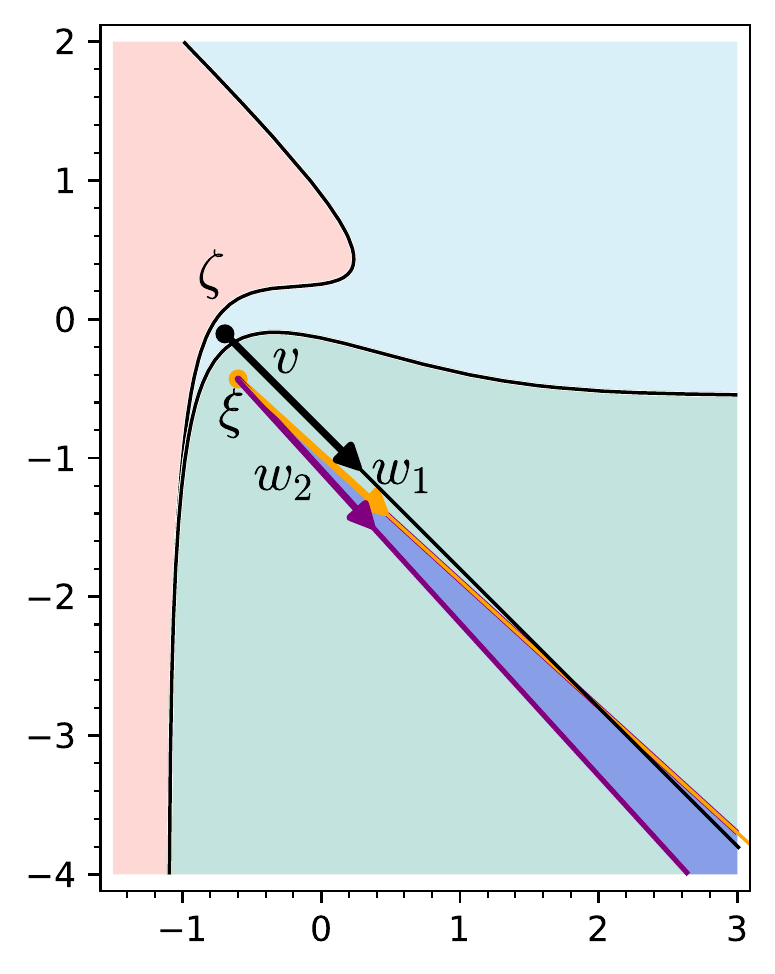}

{\small (c)}

\end{minipage}

\caption{{\small Graphical representation of Example~\ref{Ex_Thm10}.  (a) $v = (1,-1)\in \mathcal{S}^{-}(p_2)$ is a strict separating vector, the vectors $w_{1} = (1.1,-1)$ and  $w_{2} = (1,-1.1)$ are very strict separating vectors of the support of  $\tilde{p}_{2}(x_{1},x_{2})$ and form a basis of $\R^2$. (b) $p_2^{-1}(\R_{<0})$ shown in blue and its subset $\tilde{p}_2^{-1}(\R_{<0})$ shown in green. (c) The half-line $\Log(\gamma_{v,y})$  intersects the cone generated by $w_{1}, w_{2}$ with apex $\xi = \Log(x)$. 
}}  \label{FIG3}
\end{figure}

\begin{ex}
\label{Ex_Thm10}
Consider the signomial 
 \[ p_{2}(x_{1},x_{2}) = -x_{1}^{4}x_{2}^{5} + 3x_{1}^{3}x_{2}^{4} - x_{1}^{3}x_{2}^{2} - x_{1}^{2}x_{2}^{3} + x_{1}x_{2}^{2} - 3x_{1}x_{2} + x_{2}. \]
Then $v = (1,-1)\in \mathcal{S}^{-}(p_2)$ is strict, see Fig.~\ref{FIG3}(a), and  by Theorem~\ref{Thm_StrictSepConnected}, $V_{>0}^c(p_2)$ has one negative connected component which is a contractible set.

Fig.~\ref{FIG3} displays the idea of the proof of  Theorem~\ref{Thm_StrictSepConnected}. First, one considers the signomial obtained by removing the negative monomials on the separating hyperplane $\mathcal{H}_{v,-1}$ from  Fig.~\ref{FIG3}(a): 
\[ \tilde{p}_{2}(x_{1},x_{2}) = 3x_{1}^{3}x_{2}^{4} - x_{1}^{3}x_{2}^{2} + x_{1}x_{2}^{2} - 3x_{1}x_{2} + x_{2}.\]
Using Proposition \ref{prop_basis}, one can find strict separating vectors $w_{1} = (1.1,-1)$ and $w_{2}= (1,-1.1)$ of $\sigma(\tilde{p}_{2})$ such that $v\in \Cone(w_1,w_2)$. For a fixed $x \in \tilde{p}_{2}^{-1}(\mathbb{R}_{<0})$, the paths $\gamma_{w_{1},x}, \gamma_{w_{2},x}$ turn into half-lines with start point $\xi = \Log(x)$ under the coordinate-wise logarithm map (see Fig.~\ref{FIG3} (b,c)). For each point $\zeta = \Log(y) \in \Log(p_{2}^{-1}(\mathbb{R}_{<0}))$, the half-line with start point $\zeta$ and direction vector $v$ intersects $\Cone(w_1,w_2)$. By sending $\zeta$ to the first such intersection point, we obtain that $\Cone(w_1,w_2)$ is a strong deformation retract  of $\Log(p_{2}^{-1}(\mathbb{R}_{<0}))$.
\end{ex}

The results provided so far  guarantee that  $V_{>0}^{c}(f)$ has at most one negative connected component. 
With analogous techniques, the existence of strict enclosing vectors of $\sigma(-f)$ gives that $V_{>0}^{c}(f)$ has at most two negative connected components.  Note that a strict enclosing vector of $\sigma(-f)$ defines two parallel hyperplanes such that the positive points of $\sigma(f)$ are between them,  and the negative points of $\sigma(f)$ are on the other side of these hyperplanes.

\begin{thm}
\label{Thm_StripAroundPosExp}
Let $f\colon \mathbb{R}^{n}_{>0} \to \mathbb{R}$ be a signomial. If there exists a strict enclosing vector of $\sigma(-f)$, then $V_{>0}^{c}(f)$ has at most two negative connected components. 

\end{thm}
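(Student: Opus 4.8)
The plan is to mimic the argument in Theorem~\ref{Thm_StrictSepConnected}, but working with the two parallel hyperplanes attached to a strict enclosing vector $v$ of $\sigma(-f)$. Recall that such a $v$ satisfies $\sigma_-(f)\subseteq \R^n\setminus(\mathcal{H}_{v,a}^{+,\circ}\cap\mathcal{H}_{v,b}^{-,\circ})$ and $\sigma_+(f)\subseteq \mathcal{H}_{v,a}^{+}\cap\mathcal{H}_{v,b}^{-}$ for suitable $a\le b$, with the strictness giving negative points strictly on both sides. The key observation is that along any path $\gamma_{v,x}$, the one-variable signomial $(-f)_{v,x} = -f_{v,x}$ has its exponents ordered so that the negative coefficients of $f_{v,x}$ (i.e.\ the positive coefficients of $-f_{v,x}$) are grouped in a middle block — this is exactly Lemma~\ref{Lemma_BehavOffvx} applied to $-f$. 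Concretely, I would first observe that $f_{v,x}$ has coefficient sign sequence $(-\dots- +\dots+ -\dots-)$, possibly with some blocks absent, and when $v$ is strict both outer negative blocks are present, so the leading and trailing coefficients of $f_{v,x}$ are negative.

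Next I would analyze the behavior of $f_{v,x}$ when $f(x)<0$. Since $-f_{v,x}$ has at most two sign changes with positive leading coefficient (Lemma~\ref{Lemma_BehavOffvx}(i) for $-f$), and $-f_{v,x}(1)=-f(x)>0$, Lemma~\ref{Lemma_OneDimSignomial}(i) applied to $-f_{v,x}$ would give at most one root $\rho\in(1,\infty)$ of $-f_{v,x}$ beyond $1$; but because the trailing (smallest-degree) coefficient of $-f_{v,x}$ is negative we instead need to handle the reversed picture. The cleaner route: apply the same Descartes analysis to $t\mapsto f_{v,x}(1/t)$ or, equivalently, split into ``moving in direction $v$'' and ``moving in direction $-v$'' from the start point. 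The upshot I want is: if $f(x)<0$ then the set $\{t\in(0,\infty)\mid f_{v,x}(t)<0\}$ is an open interval $(\rho_-,\rho_+)$ containing $1$ (with $\rho_-$ possibly $0$ and $\rho_+$ possibly $\infty$), i.e.\ $f^{-1}(\R_{<0})$ is ``star-shaped in the two $v$-directions''. Each negative connected component, viewed in $\Log$-coordinates, is then swept out by lines in direction $v$ and is in particular connected ``in the $v$-direction''.

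Then the counting argument: I would pass to $\Log$-coordinates and project along the line $\R v$. Let $\pi\colon\R^n\to\R^n/\R v$ be the quotient. The analysis above shows each fiber $\pi^{-1}(\bar\zeta)\cap\Log(f^{-1}(\R_{<0}))$ is either empty or a single open interval, so $\Log(f^{-1}(\R_{<0}))$ maps injectively-on-components into $\pi$ of it; more precisely, two negative connected components cannot have the same image under $\pi$ is false in general, so I instead need to bound the image. The real point is that $\pi(\sigma_+(f))$ lies ``between'' the two images $\pi(\mathcal{H}_{v,a})$, $\pi(\mathcal{H}_{v,b})$ in a way that forces at most two components. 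Here I would decompose $f = f_{|\text{middle}} + g$ where $g=f_{|\sigma_-(f)}$ carries the negative monomials split into an ``upper'' part $g^+$ (on $\mathcal{H}_{v,b}^{+}$) and a ``lower'' part $g^-$ (on $\mathcal{H}_{v,a}^{-}$); by Theorem~\ref{Thm_StrictSepConnected} applied separately to the signomial with only the upper negative block and to the one with only the lower negative block, each contributes at most one contractible negative region, and a point where $f<0$ must lie in (the union of) these. The hard part will be rigorously gluing this local ``one interval per $v$-line'' structure with the two-sided hyperplane geometry to conclude exactly two components rather than $O(1)$ — i.e.\ ruling out that the two ``one-sided'' negative regions each split further once the opposite negative block is reintroduced. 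I expect to control this by a deformation-retract argument onto $\xi + \Cone(w_1,\dots,w_n)$-type sets as in Theorem~\ref{Thm_StrictSepConnected}, carried out separately for the direction $v$ and $-v$, with Proposition~\ref{prop_basis} supplying bases of separating vectors for the two auxiliary signomials.
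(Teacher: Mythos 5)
Your overall architecture is the paper's: split $\sigma_-(f)$ into the block $M$ strictly above the positive points and the block $N$ strictly below, apply Theorem~\ref{Thm_StrictSepConnected} to $f_{|M\cup\sigma_+(f)}$ and $f_{|N\cup\sigma_+(f)}$ (for which $v$ and $-v$ are strict separating vectors), and then attach every point of $f^{-1}(\R_{<0})$ to one of these two connected sets by moving in the direction $v$ or $-v$. However, there are two concrete errors and the decisive step is left open. First, the claimed ``upshot'' that $\{t\in(0,\infty)\mid f_{v,x}(t)<0\}$ is a single open interval $(\rho_-,\rho_+)$ containing $1$ is false: you yourself note that the leading and the smallest-degree coefficients of $f_{v,x}$ are both negative, so this set contains neighbourhoods of both $0$ and $\infty$ and is generically a union of \emph{two} unbounded intervals. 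Consequently the fibers of your projection $\pi$ are not single intervals and the ``star-shaped in the two $v$-directions'' picture collapses. The statement you actually need (and which the paper proves) is the dichotomy for the \emph{component of $t=1$}: since $f_{v,x}$ has at most two sign changes and is negative at both ends and at $t=1$, either $f_{v,x}(t)<0$ for all $t\geq 1$ or $f_{v,x}(t)<0$ for all $t\leq 1$ --- otherwise one would need at least four positive roots.

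Second, the claim that ``a point where $f<0$ must lie in (the union of)'' the two auxiliary negative regions is also false; only the reverse inclusions $\tilde f_M^{-1}(\R_{<0})\subseteq f^{-1}(\R_{<0})$ and $\tilde f_N^{-1}(\R_{<0})\subseteq f^{-1}(\R_{<0})$ hold (because passing from $f$ to the restrictions only deletes negative monomials). What one shows instead is that $x$ is \emph{joined by a path inside $f^{-1}(\R_{<0})$} to one of these sets: in the first branch of the dichotomy, $\gamma_{v,x}$ stays in $f^{-1}(\R_{<0})$ for all $t\ge 1$ and, since $(\tilde f_M)_{v,x}$ has negative leading coefficient, $\gamma_{v,x}(t_x)\in\tilde f_M^{-1}(\R_{<0})$ for some large $t_x$; symmetrically with $\gamma_{-v,x}$ and $\tilde f_N$ in the second branch. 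This elementary path argument is all that is needed; the deformation-retract machinery and Proposition~\ref{prop_basis} that you propose to invoke for the ``hard part'' are unnecessary here, and as written your proof does not close that gap.
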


\begin{proof}
Let $v \in \mathcal{E}^{-}(-f)$ be a strict enclosing vector. Then  for $\beta\in \sigma_{+}(-f)=\sigma_{-}(f)$, it holds that either
\[  v \cdot \beta \leq \min\limits_{\alpha \in \sigma_{+}(f)} v \cdot \alpha\quad \text{  or  } \quad \max\limits_{\alpha \in \sigma_{+}(f)} v \cdot \alpha \leq v \cdot \beta. \]
As $v$ is strict, the following sets are 
non-empty:
\[ M := \{ \beta \in \sigma_{-}(f) \mid  \max\limits_{\alpha \in \sigma_{+}(f)} v \cdot \alpha < v \cdot \beta \},\qquad N := \{ \beta \in \sigma_{-}(f) \mid v \cdot \beta < \min\limits_{\alpha \in \sigma_{+}(f)} v \cdot \alpha\}.\] 
Consider the restriction of $f$ to the   sets $M\cup  \sigma_{+}(f)$ and $N\cup  \sigma_{+}(f)$: 
\begin{align*}
\tilde{f}_{M} & := f_{|M\cup  \sigma_{+}(f)}  &  \tilde{f}_{N} &:= f_{|N\cup  \sigma_{+}(f)}.  
\end{align*}
By construction, see \eqref{eq:Salg}, $v$ and $-v$ are strict separating vectors of $\sigma(\tilde{f}_{M})$ and 
$\sigma(\tilde{f}_{N})$ respectively.
Hence $\tilde{f}^{-1}_{N}( \R_{<0})$ and $\tilde{f}^{-1}_{M}( \R_{<0})$ are path connected by Theorem \ref{Thm_StrictSepConnected}. 
Additionally, as the sets of negative points in $\sigma(\tilde{f}_{M})$ and $\sigma(\tilde{f}_{N})$ are included in $\sigma_-(f)$, it holds $f(x) \leq \tilde{f}_{N}(x)$ and $f(x) \leq \tilde{f}_{M}(x)$ for all $x \in \mathbb{R}^{n}_{>0}$ and hence 
\[\tilde{f}^{-1}_{M}( \R_{<0} ) \subseteq f^{-1}( \R_{<0} ), \qquad 
  \tilde{f}^{-1}_{N}( \R_{<0})  \subseteq f^{-1}( \R_{<0} ). \]

With this in place, if we show that for every $x \in f^{-1}( \R_{<0} )$ there is a continuous path to a point in $\tilde{f}^{-1}_{M}( \R_{<0} )$  or to a point in $\tilde{f}^{-1}_{N}( \R_{<0} )$ and this path is contained in $f^{-1}( \R_{<0} )$, then   the number of connected components of $f^{-1}( \R_{<0})$ is at most $2$.

Fix $x \in f^{-1}( \R_{<0})$.  As $v$ is a strict separating vector of $\sigma(\tilde{f}_{M})$ 
and $-v$ of $\sigma(\tilde{f}_{N})$,  there exist $t_x,d_x>1$ such that $\gamma_{v,x}(t_{x}) \in \tilde{f}_{M}^{-1}(\R_{<0} )$ and $\gamma_{-v,x}(d_{x}) \in \tilde{f}_{N}^{-1}(\R_{<0})$
by Lemma~\ref{Lemma_BehavOffvx}(ii). 

By Lemma~\ref{Lemma_BehavOffvx}(i), $f_{v,x}$ has negative leading and smallest degree coefficients, and the coefficient sign sequence has at most two sign changes. 
Hence either $f_{v,x}(t)<0$ for all $t\geq 1$ or $f_{v,x}(t)<0$  for all $t\leq 1$. 
If $f_{v,x}(t)=f(\gamma_{v,x}(t))<0$ for all $t\geq 1$, then the path $\gamma_{v,x}$ connects $x$ to a point in 
$\tilde{f}_{M}^{-1}(\R_{<0} )$. If $f_{v,x}(t)<0$ for all $t\leq 1$, then 
$f_{-v,x}(t)=f_{v,x}(t^{-1})<0$ for all $t\geq 1$. Hence 
the path $\gamma_{-v,x}$ connects $x$ to a point in 
$\tilde{f}_{N}^{-1}(\R_{<0} )$.
This concludes the proof. 
\end{proof}

\begin{ex}
\label{Ex_PosStrip}
Consider the signomial 
\[ p_{3}(x_{1},x_{2}) = x_{1}^{3}x_{2}^{5} - x_{1}^{2}x_{2}^{5} + x_{1}^{4}x_{2}^{2} + x_{1}^{3}x_{2}^{3} - x_{1}^{5} - x_{1}x_{2}^{4} - x_{1}^{3}x_{2} + 3x_{1}^{2}x_{2}^{2} - x_{1}x_{2}^{3} + x_{1}x_{2}.\]
The vector $v = (1,-1)$ is a strict enclosing vector of $-p_3$, see Fig.~\ref{FIG4}(a).  Hence, the number of negative connected components of $V_{>0}^{c}(p_{3})$ is at most two by Theorem \ref{Thm_StripAroundPosExp}.

In Fig.~\ref{FIG4}(b), the idea of the proof of Theorem \ref{Thm_StripAroundPosExp} is illustrated. 
The following two signomials are considered
\begin{align*}
\tilde{p}_{3,M}(x_{1},x_{2})  & =  x_{1}^{3}x_{2}^{5} + x_{1}^{4}x_{2}^{2} + x_{1}^{3}x_{2}^{3} - x_{1}^{5} + 3x_{1}^{2}x_{2}^{2} + x_{1}x_{2}, \\ 
\tilde{p}_{3,N}(x_{1},x_{2}) &= x_{1}^{3}x_{2}^{5} - x_{1}^{2}x_{2}^{5} + x_{1}^{4}x_{2}^{2} + x_{1}^{3}x_{2}^{3} - x_{1}x_{2}^{4} + 3x_{1}^{2}x_{2}^{2}  + x_{1}x_{2}.
\end{align*}
For each of these signomials,  the pre-image of $\R_{<0}$ is path connected and  contained in $p_{3}^{-1}( \R_{<0})$. Using the paths $\gamma_{v,x}$ or $\gamma_{-v,x}$, any point $x \in p_{3}^{-1}( \R_{<0})$ is connected to one of these two connected sets.
\end{ex}

\begin{figure}[t]
\centering
\begin{minipage}[h]{0.45\textwidth}

\centering
\includegraphics[scale=0.5]{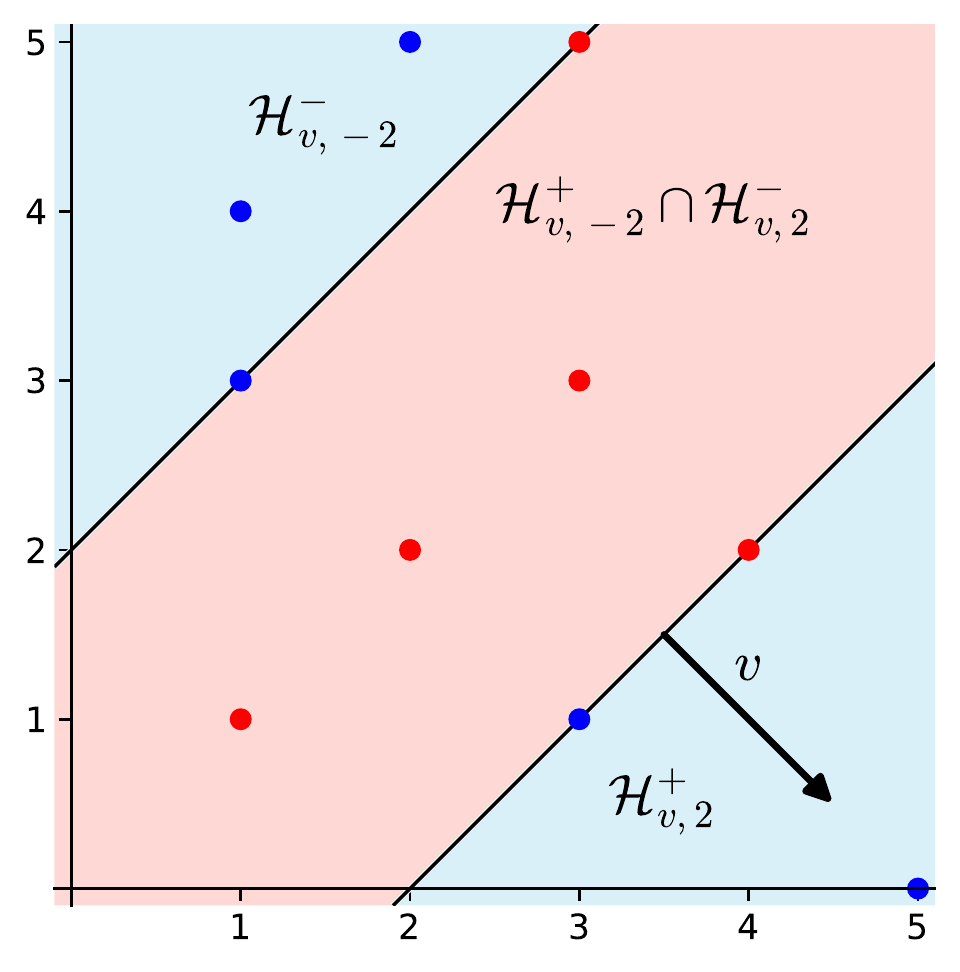}

{\small (a)}

\end{minipage}
\begin{minipage}[h]{0.45\textwidth}

\centering
\includegraphics[scale=0.5]{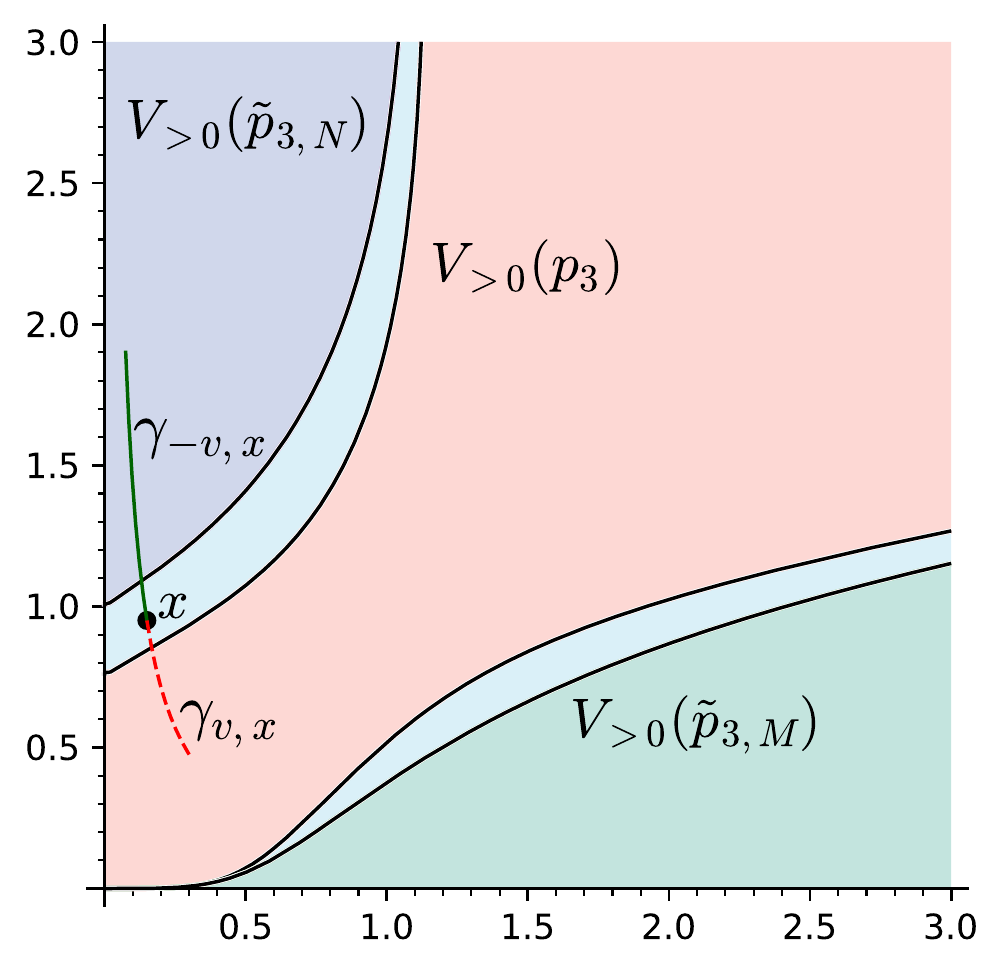}

{\small (b)}
\end{minipage}

\caption{
{\small Illustration of Example~\ref{Ex_PosStrip}.  (a) A strict enclosing vector for $-p_3$ is shown.  (b) The positive connected component of $V_{>0}^{c}(p_{3})$ is shown in red, the negative connected components of $V_{>0}^{c}(p_{3})$ are shown in blue, the subset $\tilde{p}_{3,M}^{-1}(\R_{<0})$ is shown in green, 
and the subset $\tilde{p}_{3,N}^{-1}(\R_{<0})$ is shown in purple. 
The path $\gamma_{v,x}$  from  $x = (0.15,0.95)$ to $\tilde{p}_{3,M}^{-1}( \R_{<0})$, shown dashed in red, is not contained in  $p_3^{-1}( \R_{<0})$. The path $\gamma_{-v,x}$, shown in solid green, connects $x$ with $\tilde{p}_{3,N}^{-1}( \R_{<0})$ and does not leave $p_3^{-1}( \R_{<0})$.  }} \label{FIG4}
\end{figure}

\begin{remark}
The conditions of Theorems \ref{Thm_StrictSepConnected} and \ref{Thm_StripAroundPosExp} can be checked computationally using linear programming. Finding a separating vector of $\sigma(f)$ corresponds to finding a solution of the linear inequality system
\begin{align}
\label{Eq_CondSepVector}
v \cdot \alpha \leq a, \quad \alpha \in \sigma_+(f), \qquad v \cdot \beta \geq a, \qquad \beta \in \sigma_-(f),
\end{align}
where $v \in \mathbb{R}^{n}, a \in \mathbb{R}$ are treated as unknown variables. Existing software like SageMath \cite{sagemath}, Polymake \cite{polymake:2000} and other linear programming software can find a solution to \eqref{Eq_CondSepVector} even for large number of variables and of inequalities.

Finding an enclosing hyperplane as in Theorem \ref{Thm_StripAroundPosExp} can be more demanding computationally. A naive approach is to consider all partitions of $\sigma_-(f)$ into two sets $\sigma_{-,1}(f),\sigma_{-,2}(f)$ and for each partition decide the feasibility of the system of linear inequalities
\begin{align*}
v \cdot \beta \leq a, \quad \beta \in \sigma_{-,1}(f) \qquad  a \leq     v \cdot \alpha \leq b, \quad \alpha \in \sigma_+(f), \qquad v \cdot \beta \leq b, \quad \beta \in \sigma_{-,2}(f) .
\end{align*}
\end{remark}
\color{black}

\begin{remark}
One might be tempted to believe that in the situation of Theorem~\ref{Thm_StripAroundPosExp},  $V_{>0}^c(f)$ has at most one positive connected component. However, Example~\ref{Example_2_1} gives a counter example, as  $V_{>0}^c(p_1)$ has two positive connected components, and the vector $v=(0,1)$ satisfies the hypotheses of Theorem~\ref{Thm_StripAroundPosExp}, see Fig.~\ref{FIG_Example_2_1}.
\end{remark}

A direct consequence of Theorems~ \ref{Thm_StrictSepConnected}  and \ref{Thm_StripAroundPosExp} applies to the case where the positive points of $\sigma(f)$ belong to a hyperplane that does not contain all   the negative points of $\sigma(f)$. 

\begin{cor}
\label{Cor_PosOnHyperplane} 
Let $f\colon \mathbb{R}^{n}_{>0} \to \mathbb{R}$ be a signomial. If  for some $v\in \R^n$ and $a\in \R$
\[ \sigma_+(f)\subseteq \mathcal{H}_{v,a} \qquad \textrm{and}\qquad \sigma_-(f) \nsubseteq
\mathcal{H}_{v,a},\] 
then $V_{>0}^c(f)$ has at most two negative connected components.
\end{cor}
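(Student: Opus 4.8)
The plan is to derive the statement from Theorems~\ref{Thm_StrictSepConnected} and~\ref{Thm_StripAroundPosExp} through a short case distinction on where the negative points of $\sigma(f)$ lie relative to the hyperplane $\mathcal{H}_{v,a}$. We may assume $\sigma_+(f)\neq\emptyset$, as otherwise $f(x)<0$ for every $x\in\R^n_{>0}$ and $V_{>0}^c(f)$ is a single negative connected component; then $a=v\cdot\alpha$ for every $\alpha\in\sigma_+(f)$. Introduce
\[ B^+:=\{\beta\in\sigma_-(f)\mid v\cdot\beta>a\},\qquad B^-:=\{\beta\in\sigma_-(f)\mid v\cdot\beta<a\},\]
so that the hypothesis $\sigma_-(f)\nsubseteq\mathcal{H}_{v,a}$ is exactly the statement that $B^+\cup B^-\neq\emptyset$.

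First I would handle the case where only one of $B^+,B^-$ is nonempty. After replacing $v$ by $-v$ (and $a$ by $-a$) if needed, assume $B^-=\emptyset$ and $B^+\neq\emptyset$. Then $v\cdot\beta\geq a=\max_{\alpha\in\sigma_+(f)}v\cdot\alpha$ for all $\beta\in\sigma_-(f)$, so $v\in\mathcal{S}^{-}(f)$ by~\eqref{eq:Salg}; and any $\beta_0\in B^+$ witnesses $\max_{\alpha\in\sigma_+(f)}v\cdot\alpha<v\cdot\beta_0$, so $v$ is a \emph{strict} separating vector of $\sigma(f)$. By Theorem~\ref{Thm_StrictSepConnected}, $V_{>0}^c(f)$ then has at most one negative connected component, which is certainly at most two.

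It remains to treat the case $B^+\neq\emptyset$ and $B^-\neq\emptyset$, in which I would check that $v$ is a strict enclosing vector of $\sigma(-f)$ and conclude with Theorem~\ref{Thm_StripAroundPosExp}. Since $\sigma_-(-f)=\sigma_+(f)\subseteq\mathcal{H}_{v,a}$, both the minimum and the maximum of $v\cdot(\cdot)$ over $\sigma_-(-f)$ equal $a$, so the defining condition~\eqref{eq:Ealg} for $v\in\mathcal{E}^{-}(-f)$ — that each $\beta\in\sigma_+(-f)=\sigma_-(f)$ satisfies $v\cdot\beta\leq a$ or $v\cdot\beta\geq a$ — holds automatically. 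Thus $v$ is an enclosing vector of $\sigma(-f)$, and it is strict because there are negative points of $f$ strictly on either side of $\mathcal{H}_{v,a}$, namely the nonempty sets $B^+$ and $B^-$ (these are precisely the sets $M$ and $N$ appearing in the proof of Theorem~\ref{Thm_StripAroundPosExp}). Hence that theorem applies and gives at most two negative connected components, which finishes the argument.

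The proof is essentially bookkeeping, so I do not anticipate a real obstacle; the only point that needs attention is matching the strictness clauses of Definition~\ref{Def_Strip}, and the conditions actually invoked inside the proofs of Theorems~\ref{Thm_StrictSepConnected} and~\ref{Thm_StripAroundPosExp}, to the (non)emptiness of $B^+$ and $B^-$.
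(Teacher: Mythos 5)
Your proof is correct and follows the same route as the paper's (one-sentence) proof: split according to whether the negative points lie on one or on both sides of $\mathcal{H}_{v,a}$, and invoke Theorem~\ref{Thm_StrictSepConnected} or Theorem~\ref{Thm_StripAroundPosExp} accordingly. Your write-up is in fact more precise than the paper's, which asserts that $v$ or $-v$ is a strict separating vector of $\sigma(-f)$ where it should say $\sigma(f)$, exactly as you identify it.
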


\begin{proof}
The conditions imply that either $v$ is a strict enclosing vector of $\sigma(-f)$, or either $v$ or $-v$ is a strict separating vector of $\sigma(-f)$. The statement then follows from 
Theorem \ref{Thm_StripAroundPosExp} or 
Theorem \ref{Thm_StrictSepConnected}.
\end{proof}

\begin{cor}
\label{Cor_FewPosExp}
Let $f\colon \mathbb{R}^{n}_{>0} \to \mathbb{R}$ be a signomial. If 
\[ \# \sigma_{+}(f) \leq \dim \N(f),\] 
then $V_{>0}^c(f)$ has at most two negative connected components.
\end{cor}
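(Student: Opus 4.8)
The plan is to reduce Corollary~\ref{Cor_FewPosExp} to Corollary~\ref{Cor_PosOnHyperplane} by exhibiting a hyperplane through all positive points of $\sigma(f)$ that does not contain all the negative points. The key observation is that $\#\sigma_+(f) \leq \dim\N(f)$ is a dimension count: a set of at most $k$ points in $\R^n$ spans an affine subspace of dimension at most $k-1$, hence is contained in an affine hyperplane of $\R^n$ as soon as $k-1 < n$, i.e. $\#\sigma_+(f) \leq n$. So under the hypothesis the positive points lie on some hyperplane $\mathcal{H}_{v,a}$; the content is to choose this hyperplane so that it misses at least one negative point.

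First I would dispose of trivial cases: if $\sigma_-(f)=\emptyset$, then Theorem~\ref{Thm_OneExponent} (or directly $f>0$) gives zero negative components, and if $\dim\N(f)=0$ then $\sigma(f)$ is a single point, forcing $\sigma_-(f)=\emptyset$. So assume $d:=\dim\N(f)\geq 1$ and $\sigma_-(f)\neq\emptyset$. Next, using Lemma~\ref{Lemma_TransInvariant} I would apply an affine-monomial change of variables (a matrix $M\in\GL_n(\R)$ together with a translation $v$) so that $\N(f)$ is full-dimensional, i.e. reduce to the case $d=n$; this does not change $\#\sigma_+(f)$ nor the number of negative connected components, and it turns the hypothesis into $\#\sigma_+(f)\leq n$. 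Then the positive points number at most $n$, so their affine hull $\Aff(\sigma_+(f))$ has dimension at most $n-1$ and is contained in some hyperplane $\mathcal{H}_{v,a}$.

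The main step is to arrange that $\sigma_-(f)\not\subseteq\mathcal{H}_{v,a}$. I would argue as follows: since $\N(f)=\Conv(\sigma_+(f)\cup\sigma_-(f))$ is $n$-dimensional but $\Aff(\sigma_+(f))$ has dimension $\leq n-1$, the set $\sigma_-(f)$ cannot be contained in $\Aff(\sigma_+(f))$ (otherwise all of $\sigma(f)$, and hence $\N(f)$, would lie in that proper affine subspace, contradicting full-dimensionality). Pick $\beta_0\in\sigma_-(f)\setminus\Aff(\sigma_+(f))$. Now $\Aff(\sigma_+(f))$ has dimension $\leq n-2$ or $=n-1$; in the latter case it is itself a hyperplane $\mathcal{H}_{v,a}$ and automatically $\beta_0\notin\mathcal{H}_{v,a}$, so we are done by Corollary~\ref{Cor_PosOnHyperplane}. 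In the former case ($\dim\Aff(\sigma_+(f))\leq n-2$), I would choose the hyperplane $\mathcal{H}_{v,a}\supseteq\Aff(\sigma_+(f))$ avoiding $\beta_0$: the family of hyperplanes containing a fixed affine subspace of dimension $\leq n-2$ has positive-dimensional freedom (the normal directions $v$ range over a subspace of dimension $\geq 2$, modulo scaling a space of dimension $\geq 1$), so only finitely many points can obstruct, and in particular one can avoid the single point $\beta_0$. Concretely: among all hyperplanes through $\Aff(\sigma_+(f))$, the ones through $\beta_0$ form a proper sub-family, so a generic choice works. Then Corollary~\ref{Cor_PosOnHyperplane} applies and gives at most two negative connected components.

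The step I expect to require the most care is the last one — checking that when $\dim\Aff(\sigma_+(f))\leq n-2$ one really can slide the hyperplane off $\beta_0$ — but this is elementary linear algebra: writing $\Aff(\sigma_+(f))=p+U$ with $\dim U\leq n-2$, hyperplanes containing it correspond to nonzero $v\in U^\perp$ (with $a=v\cdot p$), $\dim U^\perp\geq 2$, and the condition $v\cdot\beta_0=v\cdot p$ cuts out a proper subspace of $U^\perp$, so a vector $v\in U^\perp$ with $v\cdot(\beta_0-p)\neq 0$ exists. One small subtlety worth stating explicitly is that after the $\GL_n$-reduction one should double-check that "full-dimensional Newton polytope" is the right normalization — alternatively, one can avoid the reduction entirely and work directly inside $\Aff(\sigma(f))$, replacing $\R^n$ by this affine hull throughout and applying the same dimension count there, which is perhaps cleaner.
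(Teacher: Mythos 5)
Your proof is correct and follows essentially the same route as the paper's: the positive points lie in an affine subspace of dimension at most $\dim \N(f)-1$, which cannot contain all of $\sigma(f)$, so one extends it to a hyperplane missing a negative point and invokes Corollary~\ref{Cor_PosOnHyperplane}; you merely make explicit the hyperplane-extension step (and an unnecessary reduction to full-dimensional $\N(f)$) that the paper leaves implicit. One harmless slip in your trivial case: when $\dim\N(f)=0$ the hypothesis forces $\sigma_{+}(f)=\emptyset$ rather than $\sigma_{-}(f)=\emptyset$, so $f<0$ everywhere and $f^{-1}(\R_{<0})=\R^{n}_{>0}$ is a single negative component, which still satisfies the bound.
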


\begin{proof}
Since $\# \sigma_{+}(f) \leq \dim \N(f) \leq n$, 
the points $\sigma_{+}(f)$ lie on an affine subspace of dimension at most $\dim \N(f)-1$.  Necessarily, this subspace cannot contain all points of $\sigma(f)$. 
Hence, there exists an affine hyperplane $\mathcal{H}_{v,a}$  containing $\sigma_{+}(f)$ and not containing 
$\sigma_{-}(f)$. 
Now, the statement follows from Corollary~\ref{Cor_PosOnHyperplane}.
\end{proof}

\begin{remark}
The techniques used in this section rely on the observation that the paths \eqref{Nota_Path} become half-lines at the logarithmic scale. Studying images of algebraic sets under the coordinate-wise logarithm map has a rich history. In 1994, Gelfand et al. \cite{gelfand1994discriminants} introduced the \emph{amoeba} of a Laurent polynomial $f \in \mathbb{C}[x_{1}^{\pm 1}, \dots , x_{n}^{\pm 1}]$ which is the image of the set $\{ z \in (\mathbb{C}^{*})^{n} \mid f(z) = 0 \}$ under the map $(\mathbb{C}^{*})^{n} \to \mathbb{R}^{n}, (z_{1},\dots,z_{n}) \mapsto (\log(|z_{1}|),\dots,\log( |z_{n} | ))$. Since then, many results have been proved about the structure of the connected components of the complement of the amoeba. It is known that these connected components are convex \cite[Corollary 1.6]{gelfand1994discriminants}, their number is at least equal to the number of vertices of the Newton polytope $\N(f)$ and at most equal to the total number of integer points in $\N(f) \cap \mathbb{Z}^{n}$ \cite[Theorem 2.8]{Forsberg2000LaurentDA}. Furthermore, if the polynomial is maximally sparse (i.e. every exponent of $f$ is a vertex of $\N(f)$), then the number of connected components of the complement of the amoeba is equal to the number of vertices of $\N(f)$ \cite{Nisse2008maximally}, and each of these components is unbounded \cite[Corollary 1.8]{gelfand1994discriminants}.

The logarithmic image of $V_{>0}(f)$ can be seen as the “positive real part” of the amoeba of $f$. Therefore, one might hope that statements about amoebas can be translated directly to answer Problem \ref{TheProblem}. However, logarithmic images of $V_{>0}(f)$ have been studied in \cite{Alessandrini2007LogarithmicLS}, where the author concluded that, in general, it is not possible to use properties of the amoeba to understand the logarithmic image of $V_{>0}(f)$ \cite[Section 5.1]{Alessandrini2007LogarithmicLS}. To illustrate that the amoeba of $f$ and the logarithmic image can behave differently, we recall the following example \cite[Example 2.6]{rojas2017adiscriminants}. 
Consider the maximally sparse polynomial $f = 1 - x_{1} - x_{2} + \tfrac{6}{5}  x_{1}^4 x_{2} + \tfrac{6}{5} x_{1}x_{2}^{4}$. The complement of the amoeba of $f$ has $5$ connected components, which are convex and unbounded. However, it is easy to see that the complement of $\Log(V_{>0}(f))$ has a bounded connected component, which is contained in the amoeba of $f$.
\end{remark}
\color{black}

\section{Convexification of signomials}
\label{Convexification}
In Section \ref{Section_Path_on_log_paper}, we used continuous paths (\ref{Nota_Path}), which are half-lines on logarithmic scale, to derive bounds for the number of negative connected components of $V_{>0}^{c}(f)$, where $f$ is a signomial function. In this section, we take a different approach to bound the number of negative connected components of $V_{>0}^{c}(f)$. We use the almost trivial observation that every sublevel set of a convex function is a convex set (see e.g. \cite[Theorem 4.6.]{Rockafellar}). Therefore, $V_{>0}^{c}(f)$ has at most one negative connected component, if $f$ is a convex function. With this in mind,  we investigate what signomials can be transformed into a convex function using Lemma \ref{Lemma_TransInvariant}.

From \cite[Theorem 7]{Maranas_Floudas}, one can easily derive a sufficient condition for convexity of signomials.

\begin{lemma}
\label{Lemma_ConvexCit}
A signomial $f\colon \mathbb{R}^{n}_{>0} \to \mathbb{R}$ is a convex function if the following holds:
\begin{itemize}
\item[(a)]  For each $\alpha \in \sigma_{+}(f)$, it holds that
\begin{itemize}
\item[(i)] $\alpha_{i} \leq 0$ for all $i = 1, \dots ,n$, or
\item[(ii)] there exists  $j \in \{ 1, \dots ,n \}$ such that $\alpha_{i} \leq 0$ for all $i \neq j$ and 
$(1, \dots ,1) \cdot \alpha \geq 1$,
\end{itemize}
\item[(b)] For each $\beta \in \sigma_{-}(f)$, it holds that $\beta_{i} \geq 0$ for all $i = 1, \dots ,n$ and $(1, \dots ,1)\cdot\beta \leq 1$.
\end{itemize} 
\end{lemma}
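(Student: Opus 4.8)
The plan is to prove convexity by examining each monomial of $f$ separately and showing it is a convex function on $\mathbb{R}^n_{>0}$ under the hypotheses; since a sum of convex functions is convex, this suffices. So the real task is: (a) for $\alpha \in \sigma_+(f)$, the monomial $c_\alpha x^\alpha$ with $c_\alpha>0$ is convex whenever $\alpha$ satisfies (a)(i) or (a)(ii); (b) for $\beta \in \sigma_-(f)$, the monomial $c_\beta x^\beta$ with $c_\beta<0$ is convex whenever $\beta$ satisfies (b). Equivalently, for a positive coefficient I need $x^\alpha$ convex, and for a negative coefficient I need $x^\beta$ concave (so that $c_\beta x^\beta$, a negative multiple of a concave function, is convex).

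Concretely, I would first recall (citing \cite{Maranas_Floudas} as the excerpt suggests, or reproving directly) the standard fact characterizing when a single generalized monomial $x^\gamma = x_1^{\gamma_1}\cdots x_n^{\gamma_n}$ is convex or concave on the positive orthant. The clean way is to work in logarithmic coordinates: write $g(y) = \exp(\gamma \cdot y)$ where $y = \Log(x)$, but that only gives log-convexity, which is not quite what is needed; instead I would directly compute the Hessian of $x^\gamma$ in the original coordinates. A short computation gives $\partial_i \partial_j x^\gamma = x^\gamma \cdot h_{ij}(x)$ where $h_{ij} = \gamma_i\gamma_j/(x_ix_j)$ for $i\ne j$ and $h_{ii} = \gamma_i(\gamma_i-1)/x_i^2$. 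Factoring, the Hessian equals $x^\gamma D (\gamma\gamma^T - \mathrm{diag}(\gamma)) D$ where $D = \mathrm{diag}(1/x_1,\dots,1/x_n)$, so up to the positive scalar $x^\gamma$ and the congruence by the invertible $D$, positive semidefiniteness reduces to that of the constant matrix $\gamma\gamma^T - \mathrm{diag}(\gamma)$. One then checks: this matrix is positive semidefinite iff either all $\gamma_i \le 0$, or exactly one $\gamma_j \ge 0$ with $\gamma_i \le 0$ for $i \ne j$ and $\sum_i \gamma_i \ge 1$ — matching condition (a). Similarly $\mathrm{diag}(\gamma) - \gamma\gamma^T$ is positive semidefinite (i.e.\ $x^\gamma$ concave) iff all $\gamma_i \ge 0$ and $\sum_i \gamma_i \le 1$ — matching condition (b).

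The verification of these two semidefiniteness equivalences is the technical heart. For the "$x^\gamma$ concave" case: if all $\gamma_i \ge 0$ and $\sum \gamma_i \le 1$, then for any $z \in \mathbb{R}^n$, $z^T(\mathrm{diag}(\gamma) - \gamma\gamma^T)z = \sum_i \gamma_i z_i^2 - (\sum_i \gamma_i z_i)^2$, and by Cauchy--Schwarz (or Jensen, treating $\gamma_i/(\sum\gamma_i)$ as weights) $(\sum \gamma_i z_i)^2 \le (\sum \gamma_i)(\sum \gamma_i z_i^2) \le \sum \gamma_i z_i^2$, so this is $\ge 0$. For the "$x^\gamma$ convex" case with the single positive index $j$: reduce to showing $\gamma_j z_j^2 - \gamma_j^2 z_j^2 + \sum_{i\ne j}(-\gamma_i)z_i^2 \cdot(\text{something}) \ge$ \dots — more carefully, with $\gamma_i = -\delta_i \le 0$ for $i \ne j$, one expands and uses $\sum_i \gamma_i \ge 1$ to dominate the cross terms; the all-nonpositive case is easier since then $-\gamma_i \ge 0$ and the Cauchy--Schwarz argument applies to $-\gamma$ after noting a sign bookkeeping.

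The main obstacle I anticipate is the case analysis in condition (a)(ii): handling the mixed-sign exponent vector (one nonnegative coordinate, the rest nonpositive) requires a slightly delicate quadratic-form estimate rather than a one-line Cauchy--Schwarz, and one must use the constraint $(1,\dots,1)\cdot\alpha \ge 1$ in exactly the right place. A cleaner alternative, which I would likely adopt to sidestep the case analysis entirely, is simply to invoke \cite[Theorem 7]{Maranas_Floudas} as a black box — as the sentence preceding the lemma explicitly licenses — and merely explain that summing the convex monomials yields convexity of $f$; in that case the proof collapses to one or two lines. Given the phrasing ``one can easily derive,'' I expect the intended proof is the short one: cite the single-monomial convexity criterion from \cite{Maranas_Floudas}, note conditions (a) and (b) are precisely that criterion applied monomial-by-monomial to the positive and negative parts respectively, and conclude that $f = f_{|\sigma_+(f)} + f_{|\sigma_-(f)}$ is a sum of convex functions, hence convex.
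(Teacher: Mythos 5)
Your proposal is correct and its final recommended route --- cite \cite[Theorem 7]{Maranas_Floudas} for term-by-term convexity of each monomial (convexity of $x^\alpha$ for positive coefficients, concavity of $x^\beta$ for negative ones) and then sum convex functions --- is exactly the paper's proof. The additional Hessian computation you sketch is a correct self-contained verification of the cited monomial criterion, but it is not needed and not part of the paper's argument.
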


\begin{proof}
By \cite[Theorem 7]{Maranas_Floudas}, hypotheses (a) and (b) imply that each term $c_{\alpha}x^{\alpha}$, $\alpha \in \sigma_{+}(f)$ and $c_{\beta}x^{\beta}$, $\beta \in \sigma_{-}(f)$ is convex. The result follows from the fact that the sum of convex functions is convex.
\end{proof}

We proceed to interpret the conditions in Lemma \ref{Lemma_ConvexCit}   geometrically.

\begin{definition}
Given an $n$-simplex $P \subseteq \mathbb{R}^{n}$ with vertices $\mu_{0}, \dots , \mu_{n}$, we define for $k\in \{0,\dots,n\}$   the \textit{negative vertex cone} at the vertex $\mu_{k}$ as
\begin{align*}
P^{-,k} & :=  \mu_{k} + \Cone( \mu_k - \mu_0, \dots , \mu_k - \mu_n ) \\ 
&= \left\{ \sum\limits_{i = 0}^{n} \lambda_{i}\mu_{i} \mid \sum\limits_{i = 0}^{n} \lambda_{i} = 1,\  \lambda_{i} \leq 0 \text{ for all } i \neq k \right\}.
\end{align*} 
We write $P^{-} = \bigcup_{k=0}^n P^{-,k}$.
\end{definition}

Note that it follows that  $\lambda_{k} > 0$ in the definition of $P^{-,k}$. 
The name 'negative vertex cone' comes from \cite{VertexCone_Brion, VertexCone_Sottile}, where the authors refer to the \textit{vertex cone} as the pointed convex cone with apex $\mu_{k}$ and generators the edge directions pointing out of $\mu_k$.
Fig.~\ref{FIG5}(a) shows an example of the negative vertex cones in the plane. 

The next proposition  provides another geometric interpretation of negative vertex cones. First recall that every $n$-simplex $P \subseteq \mathbb{R}^{n}$ has $n+1$ facets, each facet $F$ is  supported on a hyperplane $\mathcal{H}_{v_{F},a_{F}}$, and it holds that $P = \bigcap_{F\subseteq P \text{ facet}} \mathcal{H}^{-}_{v_{F},a_{F}}$ \cite[Section 4.1]{grunbaum2003convex}.

\begin{prop}
\label{Prop_GeomInt_Pout}
Let $P = \Conv(\mu_{0}, \dots ,\mu_{n})  \subseteq \mathbb{R}^{n}$ be an $n$-simplex. A point $\alpha \in \mathbb{R}^{n}$ belongs to $P^{-,k}$ for $k\in \{0,\dots,n\}$, if and only if 
$\alpha \in \mathcal{H}^{+}_{v_{F},a_{F}}$ for all facets $F$ of $P$ containing $\mu_k$. In that case, it holds  $\alpha \in \mathcal{H}^{-}_{v_{F},a_{F}}$ for the facet $F$ not containing $\mu_k$.
\end{prop}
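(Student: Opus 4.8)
The plan is to set up barycentric coordinates on the simplex $P$ and match them to the facet inequalities. Label the facets of $P$ by their missing vertex: for $j\in\{0,\dots,n\}$, let $F_j$ be the facet spanned by $\{\mu_i : i\neq j\}$, supported on $\mathcal{H}_{v_{F_j},a_{F_j}}$ with $P\subseteq \mathcal{H}^-_{v_{F_j},a_{F_j}}$. First I would record the well-known fact that, for a point $\alpha$ written in affine (barycentric) coordinates $\alpha=\sum_{i=0}^n\lambda_i\mu_i$ with $\sum_i\lambda_i=1$ (these exist and are unique since $\mu_0,\dots,\mu_n$ are affinely independent), the $j$-th barycentric coordinate is an affine function of $\alpha$ that vanishes exactly on $\mathcal{H}_{v_{F_j},a_{F_j}}$ and is positive on the open side containing $\mu_j$. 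Concretely, $\lambda_j(\alpha)$ is a positive scalar multiple of $a_{F_j}-v_{F_j}\cdot\alpha$ (using the sign convention $P\subseteq\mathcal H^-_{v_{F_j},a_{F_j}}$ and $\lambda_j(\mu_j)=1>0$). Hence $\alpha\in\mathcal H^+_{v_{F_j},a_{F_j}}$ iff $\lambda_j(\alpha)\le 0$, and $\alpha\in\mathcal H^-_{v_{F_j},a_{F_j}}$ iff $\lambda_j(\alpha)\ge 0$.

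With this translation in hand the proposition is essentially a restatement. The facets of $P$ containing the vertex $\mu_k$ are exactly $F_j$ for $j\neq k$. So "$\alpha\in\mathcal H^+_{v_F,a_F}$ for all facets $F$ containing $\mu_k$" translates to "$\lambda_j(\alpha)\le 0$ for all $j\neq k$", which together with $\sum_i\lambda_i(\alpha)=1$ is precisely the second description of $P^{-,k}$ given in the definition. This proves the equivalence. For the last sentence: the unique facet not containing $\mu_k$ is $F_k$, and if $\lambda_j(\alpha)\le 0$ for all $j\neq k$ then $\lambda_k(\alpha)=1-\sum_{j\neq k}\lambda_j(\alpha)\ge 1>0$, so in particular $\lambda_k(\alpha)\ge 0$, i.e. $\alpha\in\mathcal H^-_{v_{F_k},a_{F_k}}$.

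The only genuinely non-routine point is justifying that the $j$-th barycentric coordinate, as a function of $\alpha\in\mathbb R^n$, coincides up to a positive multiple with the affine functional $a_{F_j}-v_{F_j}\cdot\alpha$ cutting out the facet $F_j$. I would argue this by noting both are affine functions on $\mathbb R^n$ that vanish on the affine hyperplane spanned by $F_j$ (for the barycentric coordinate: it vanishes at each $\mu_i$, $i\neq j$, hence on their affine span, which is $n-1$-dimensional, hence equals $\mathcal H_{v_{F_j},a_{F_j}}$), so they are proportional; and the proportionality constant is positive because both take a value of the same sign ($\lambda_j(\mu_j)=1>0$ and $a_{F_j}-v_{F_j}\cdot\mu_j>0$ since $\mu_j\notin F_j$ and $P\subseteq\mathcal H^-_{v_{F_j},a_{F_j}}$). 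Everything else is bookkeeping with the constraint $\sum_i\lambda_i=1$.
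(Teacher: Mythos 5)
Your proposal is correct and follows essentially the same route as the paper's proof: both translate the facet inequalities into signs of barycentric coordinates, the only cosmetic difference being that you establish the identity $\lambda_j(\alpha)=c_j\,(a_{F_j}-v_{F_j}\cdot\alpha)$ with $c_j>0$ via proportionality of affine functionals vanishing on the same hyperplane, whereas the paper obtains the same identity by directly expanding $v_{F_j}\cdot\alpha=\sum_i\lambda_i\,(v_{F_j}\cdot\mu_i)$.
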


\begin{proof}
Denote by $F_{i}$ the facet of $P$ that does not contain $\mu_{i}$ and 
$\mathcal{H}_{v_{i},a_{i}}$ a supporting hyperplane. 
In particular  it holds that
\begin{equation}\label{eq:vc}
v_{j}\cdot\mu_{i} = a_{j} \quad \textrm{for }\quad i\neq j \qquad\textrm{and}\qquad v_{i}\cdot\mu_{i} < a_{i}, \qquad \textrm{for } \quad i=0,\dots,n.
\end{equation}

The condition in the statement is equivalent to  the existence of $k \in \{ 0 , \dots , n \}$ such that 
\begin{align}
\label{Align_Cond}
 \quad v_{i} \cdot \alpha \geq a_{i} \quad \text{    for    } \quad i \neq k.
\end{align}
Write $\alpha = \sum_{j=0}^{n} \lambda_{j}\mu_{j}$ for 
$\lambda_{0}, \dots, \lambda_{n} \in \mathbb{R}$ such that   $\sum_{j=0}^{n} \lambda_{j} = 1$. Then
\begin{equation}\label{eq:ci}
\begin{aligned}
v_{i} \cdot \alpha & = \sum_{j=0}^{n} \lambda_{j}  (v_i \cdot \mu_j) = \lambda_i (v_i \cdot \mu_i) + \sum_{j=0, j\neq i}^{n} \lambda_{j}  a_i  \\ & =  \lambda_i (v_i \cdot \mu_i) + (1-\lambda_i) a_i= a_i + \lambda_i(v_i \cdot \mu_i - a_i).
\end{aligned}
  \end{equation}
Using this, condition  \eqref{Align_Cond} holds for some $k$ if and only if 
\[  
\lambda_i(v_i \cdot \mu_i - a_i)\geq 0 \quad \text{    for    } \quad i \neq k.\]
By \eqref{eq:vc}, this holds if and only if 
$\lambda_{i} \leq 0$ for $i\neq k$, that is, if and only if  $\alpha \in P^{-,k} \subseteq P^{-}$.   As then, $\lambda_k\geq 0$, \eqref{eq:ci} gives that $v_k\cdot \alpha<a_k$ and hence $\alpha\in \mathcal{H}^{-}_{v_{k},a_{k}}$. 
\end{proof}

 We write $\Delta_{n} := \Conv(e_{0},e_{1}, \dots ,e_{n})$ for the standard $n$-simplex in $\mathbb{R}^{n}$, where $e_{1}, \dots , e_{n}$ are the standard basis vectors of $\mathbb{R}^{n}$ and $e_{0}$ denotes the zero vector.

\begin{lemma}\label{Lemma_ConvCondOut}
Let $f\colon \mathbb{R}^{n}_{>0} \to \mathbb{R}$ be a signomial. 
If $\sigma_{-}(f) \subseteq \Delta_{n}$ and $\sigma_{+}(f) \subseteq \Delta_{n}^{-}$, then  $f$ is a convex function.
\end{lemma}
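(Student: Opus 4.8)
The plan is to reduce Lemma~\ref{Lemma_ConvCondOut} to the sufficient convexity criterion of Lemma~\ref{Lemma_ConvexCit} by checking that the hypotheses $\sigma_-(f)\subseteq\Delta_n$ and $\sigma_+(f)\subseteq\Delta_n^-$ translate precisely into conditions (a) and (b) there. The key is to use the explicit description of the facets of the standard simplex $\Delta_n=\Conv(e_0,\dots,e_n)$ with $e_0=0$: its $n+1$ supporting facet hyperplanes are $\{x_i=0\}$ for $i=1,\dots,n$ (the facet not containing the vertex $e_i$) and $\{(1,\dots,1)\cdot x=1\}$ (the facet not containing $e_0$). Correspondingly $\Delta_n=\mathcal{H}^+_{e_1,0}\cap\dots\cap\mathcal{H}^+_{e_n,0}\cap\mathcal{H}^-_{(1,\dots,1),1}$ up to the sign conventions in Definition~\ref{Def_Strip}.

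First I would handle the negative points. If $\beta\in\sigma_-(f)\subseteq\Delta_n$, then membership in $\Delta_n$ means $\beta$ satisfies all the defining inequalities of the simplex, i.e.\ $\beta_i\ge 0$ for $i=1,\dots,n$ and $(1,\dots,1)\cdot\beta\le 1$. This is exactly condition (b) of Lemma~\ref{Lemma_ConvexCit}, so every negative term $c_\beta x^\beta$ is convex.

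Next I would handle the positive points using Proposition~\ref{Prop_GeomInt_Pout} applied to $P=\Delta_n$. If $\alpha\in\sigma_+(f)\subseteq\Delta_n^-$, then $\alpha\in\Delta_n^{-,k}$ for some $k\in\{0,\dots,n\}$, and by the proposition $\alpha\in\mathcal{H}^+_{v_F,a_F}$ for every facet $F$ containing $\mu_k=e_k$, and $\alpha\in\mathcal{H}^-_{v_F,a_F}$ for the one facet not containing $e_k$. I would now split into the two cases $k=0$ and $k\ge 1$. If $k=0$: the facets containing $e_0$ are exactly $\{x_i=0\}$, $i=1,\dots,n$, whose outer-pointing normals give $\alpha_i\le 0$ for all $i$ — this is case (a)(i). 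If $k=j\ge 1$: the facets containing $e_j$ are $\{x_i=0\}$ for $i\ne j$ together with $\{(1,\dots,1)\cdot x=1\}$; from the first set we get $\alpha_i\le 0$ for all $i\ne j$, and from the second (which for $e_j$, $j\ge1$, is a facet \emph{containing} $e_j$) we get $(1,\dots,1)\cdot\alpha\ge 1$ — this is case (a)(ii). In either case the term $c_\alpha x^\alpha$ is convex. I must be a little careful about which inequality orientation ($\ge$ vs $\le$) corresponds to $\mathcal{H}^+$ for each facet of $\Delta_n$; for the coordinate facets the half-space containing the simplex is $x_i\ge 0$, so the ``$+$'' half-space (the one $\alpha$ lies in, on the far side) is $x_i\le 0$, whereas for the top facet the simplex lies in $(1,\dots,1)\cdot x\le 1$ so the ``$+$'' side is $(1,\dots,1)\cdot x\ge 1$; getting these signs straight is the only real bookkeeping step.

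With both (a) and (b) verified, Lemma~\ref{Lemma_ConvexCit} immediately gives that $f$ is a convex function, concluding the proof. I do not expect a genuine obstacle here — the content is entirely in Proposition~\ref{Prop_GeomInt_Pout}, which is already established — so the main thing to get right is the explicit identification of the facet normals of $\Delta_n$ and their orientations, and the clean case split $k=0$ versus $k\ge1$ matching (a)(i) versus (a)(ii).
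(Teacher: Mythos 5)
Your proof is correct and follows essentially the same route as the paper: reduce to Lemma~\ref{Lemma_ConvexCit} by matching its conditions (a) and (b) with membership in $\Delta_n$ and $\Delta_n^{-}$, splitting the positive points into the cases $k=0$ and $k\ge 1$. The only cosmetic difference is that you invoke Proposition~\ref{Prop_GeomInt_Pout} to translate $\Delta_n^{-,k}$ into facet half-space inequalities, whereas the paper carries out the equivalent barycentric-coordinate computation directly; your sign bookkeeping for the facet normals of $\Delta_n$ is right.
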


\begin{proof}
We show that the conditions in Lemma~\ref{Lemma_ConvexCit} are equivalent to $\sigma_{-}(f) \subseteq \Delta_{n}$ and $\sigma_{+}(f) \subseteq \Delta_{n}^{-}$.
For $\beta \in \R^n$, find  the unique $\lambda_{0}, \dots , \lambda_{n} \in \mathbb{R}$ such that $\sum_{i = 0}^{n}\lambda_{i}e_{i} = \beta$ and $\sum_{i = 0}^{n}\lambda_{i}=1$. 
Note that $(1, \dots , 1) \cdot \beta = \sum_{i = 1}^{n}\lambda_{i} =1-\lambda_0$, which is at most $1$ if and only if $\lambda_0\geq 0$.

Lemma \ref{Lemma_ConvexCit}(b)  holds if and only if 
$\lambda_{i}\geq 0$ for all $i=1,\dots,n$ and $\sum_{i = 1}^{n}\lambda_{i} \leq 1$. Equivalently, 
$\lambda_{i}\geq 0$ for all $i=1,\dots,n$ and $\lambda_0\geq 0$, that is, $\beta\in \Delta_n$. 

We show now that $\beta \in \Delta^{-}_n$ if and only if Lemma~\ref{Lemma_ConvexCit}(a)  holds. 
By definition, $\beta \in \Delta^{-}_n$ if and only if   for some $k$,
\begin{equation}\label{eq:cond1}
 \lambda_{i} \leq 0\quad  \textrm{ for }\quad i \neq k.
\end{equation}

For $k= 0$, \eqref{eq:cond1} holds if and only if $\beta_i\leq 0$ for all $i$, thus  Lemma \ref{Lemma_ConvexCit}(a,i) holds. For $k > 0$,  
  \eqref{eq:cond1} holds, if and only if all but the $k$-th coordinate of $\beta$ are non-positive, and 
$\lambda_0\leq 0$, equivalently $(1, \dots , 1) \cdot \beta \geq  1$, which is Lemma \ref{Lemma_ConvexCit}(a,ii). 
 This concludes the proof.
 \end{proof}

We next look into what signomials can be transformed into a convex signomial using the transformations from Lemma \ref{Lemma_TransInvariant}.  It is well known that any two $n$-simplices are affinely isomorphic \cite{Ziegler_book}. The next lemma shows that the negative vertex cones are preserved under such an affine transformation. 
 
\begin{lemma}
\label{Lemma_PoutQout}
Let $P,Q \subseteq \mathbb{R}^{n}$ be $n$-simplices. For every  $B \subseteq P$ and $A \subseteq P^{-}$, there exist an invertible matrix $M \in \GL_{n}(\mathbb{R})$ and a vector $v \in \mathbb{R}^{n}$ such that $M B + v \subseteq Q$ and $MA+v \subseteq Q^{-}$.
\end{lemma}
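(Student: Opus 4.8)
The plan is to reduce the statement to the single fact that any two $n$-simplices are affinely isomorphic, and that an affine isomorphism carrying one simplex onto another necessarily matches up their vertex structure (hence their negative vertex cones). First I would fix an ordering $\mu_0,\dots,\mu_n$ of the vertices of $P$ and $\nu_0,\dots,\nu_n$ of the vertices of $Q$. Since the vectors $\mu_1-\mu_0,\dots,\mu_n-\mu_0$ form a basis of $\R^n$ (as $P$ is an $n$-simplex), and likewise $\nu_1-\nu_0,\dots,\nu_n-\nu_0$, there is a unique $M\in\GL_n(\R)$ with $M(\mu_i-\mu_0)=\nu_i-\nu_0$ for $i=1,\dots,n$; setting $v:=\nu_0-M\mu_0$ gives an affine map $T(x)=Mx+v$ with $T(\mu_k)=\nu_k$ for all $k=0,\dots,n$. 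Because $T$ is affine and bijective, it maps convex hulls to convex hulls, so $T(P)=\Conv(\nu_0,\dots,\nu_n)=Q$, which already gives $MB+v=T(B)\subseteq T(P)=Q$ for any $B\subseteq P$.

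It remains to check $T(P^{-,k})=Q^{-,k}$ for each $k$, which yields $T(A)\subseteq T(P^{-})=Q^{-}$ for $A\subseteq P^{-}$. This is immediate from the barycentric description in the definition of the negative vertex cone: a point $\alpha=\sum_{i=0}^n\lambda_i\mu_i$ with $\sum_i\lambda_i=1$ satisfies $T(\alpha)=\sum_{i=0}^n\lambda_i T(\mu_i)=\sum_{i=0}^n\lambda_i\nu_i$, again an affine combination with the same coefficients $\lambda_i$; since the condition ``$\lambda_i\le 0$ for all $i\neq k$'' is phrased purely in terms of these coefficients, $\alpha\in P^{-,k}$ if and only if $T(\alpha)\in Q^{-,k}$. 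Taking the union over $k$ gives $T(P^{-})=Q^{-}$, and combining with the previous paragraph finishes the proof.

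There is no real obstacle here; the only thing to be slightly careful about is the claim that an affine isomorphism respects barycentric coordinates, i.e. that $\sum\lambda_i=1$ implies $T(\sum\lambda_i\mu_i)=\sum\lambda_i T(\mu_i)$ — this uses $T(x)=Mx+v$ and the normalization $\sum_i\lambda_i=1$ to absorb the translation $v$ correctly, and it is exactly why the negative vertex cones, being defined via affine combinations with prescribed sign pattern on the coefficients, transform equivariantly. One could alternatively argue via Proposition~\ref{Prop_GeomInt_Pout}, noting that $T$ sends the facet hyperplanes of $P$ to those of $Q$ and preserves the relevant half-spaces, but the direct barycentric computation is shorter and cleaner.
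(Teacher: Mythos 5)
Your proposal is correct and follows essentially the same route as the paper: construct the affine isomorphism $T(x)=Mx+v$ matching vertices, then observe that $T$ preserves barycentric coordinates, so the sign conditions defining $P$ and $P^{-,k}$ transfer directly to $Q$ and $Q^{-,k}$. The paper's proof is exactly this computation, so nothing further is needed.
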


\begin{proof}
Denote by $\{p_{0}, \dots , p_{n} \}$ and $\{q_{0}, \dots , q_{n} \}$ the vertex sets of $P$ and $Q$ respectively. Since $P$ and $Q$ are simplices, there is an invertible matrix $M \in \GL_{n}(\mathbb{R})$ such that $M(p_{i} - p_{0}) = q_{i} - q_{0}$ for $i = 1, \dots , n$. Define $v := -Mp_{0} + q_{0}$. By construction, it holds that $Mp_{i} + v = q_{i}$ for every $i = 0, \dots, n$.

For each $\mu \in \mathbb{R}^{n}$, write $\mu = \sum_{i=0}^{n}\lambda_{i}p_{i}$ with $\sum_{i=0}^{n}\lambda_{i} = 1$. It holds that 
\[ M\mu + v = \sum_{i=0}^{n}\lambda_{i}Mp_{i} + \sum\limits_{i=0}^{n}\lambda_{i} v = \sum_{i=0}^{n}\lambda_{i}(Mp_{i} +v) = \sum_{i=0}^{n}\lambda_{i}q_{i}.\]
That is, the coordinates of $\mu$ according to $P$ and those of $M\mu + v$ according to $Q$ are the same. 
From this the statement follows.
\end{proof}

\begin{thm}
\label{Thm_Convexification}
Let $f\colon \mathbb{R}^{n}_{>0} \to \mathbb{R}$ be a signomial. If there exists an $n$-simplex $P$ such that 
\[ \sigma_{-}(f) \subseteq P,\qquad \textrm{and}\qquad  \sigma_{+}(f) \subseteq P^{-},\]
 then $f^{-1}( \R_{<0})$ is  either empty or contractible. In particular, $V_{>0}^{c}(f)$ has at most one negative connected component.
\end{thm}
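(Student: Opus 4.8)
The plan is to reduce Theorem~\ref{Thm_Convexification} to the special case already handled by Lemma~\ref{Lemma_ConvCondOut} via the monomial change of variables of Lemma~\ref{Lemma_TransInvariant}. First I would invoke Lemma~\ref{Lemma_PoutQout} with $P$ the given $n$-simplex, $Q = \Delta_n$ the standard simplex, $B = \sigma_-(f)$ and $A = \sigma_+(f)$: this yields $M \in \GL_n(\R)$ and $v \in \R^n$ such that $M\sigma_-(f) + v \subseteq \Delta_n$ and $M\sigma_+(f) + v \subseteq \Delta_n^-$. Then I would consider the transformed signomial $g := F_{M,v,f}$ from Lemma~\ref{Lemma_TransInvariant}. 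By that lemma, $\sigma_+(g) = M\sigma_+(f) + v$ and $\sigma_-(g) = M\sigma_-(f) + v$, so $g$ satisfies exactly the hypotheses of Lemma~\ref{Lemma_ConvCondOut}, namely $\sigma_-(g) \subseteq \Delta_n$ and $\sigma_+(g) \subseteq \Delta_n^-$.

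Applying Lemma~\ref{Lemma_ConvCondOut}, $g$ is a convex function on $\R^n_{>0}$. Since $g^{-1}(\R_{<0})$ is a strict sublevel set of a convex function, it is a convex subset of $\R^n_{>0}$ (this is the ``almost trivial observation'' recalled at the start of Section~\ref{Convexification}; see \cite[Theorem 4.6]{Rockafellar}), hence either empty or contractible. Finally, Lemma~\ref{Lemma_TransInvariant} provides a homeomorphism between the negative connected components of $V_{>0}^c(f)$ and those of $V_{>0}^c(g)$; since $f^{-1}(\R_{<0})$ and $g^{-1}(\R_{<0})$ are (disjoint unions of) the negative connected components, they are homeomorphic, so $f^{-1}(\R_{<0})$ is empty or contractible as well. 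In particular $V_{>0}^c(f)$ has at most one negative connected component.

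I do not anticipate a serious obstacle here, since all the technical work has been front-loaded into the lemmas. The one point requiring a little care is the passage from ``$g$ is convex'' to ``$g^{-1}(\R_{<0})$ is contractible'': the domain $\R^n_{>0}$ is itself convex, so a strict sublevel set of a convex function $g\colon \R^n_{>0} \to \R$ is genuinely convex as a subset of $\R^n$ (not merely relatively convex), and a nonempty convex set is contractible. A second small point is to make sure the homeomorphism from Lemma~\ref{Lemma_TransInvariant} really identifies $f^{-1}(\R_{<0})$ with $g^{-1}(\R_{<0})$ as topological spaces — but this is immediate since that lemma's homeomorphism is built from $h_M$, which is a homeomorphism $\R^n_{>0} \to \R^n_{>0}$ carrying $g^{-1}(\R_{<0})$ onto $f^{-1}(\R_{<0})$ by the identity $g(x) = x^v f(h_M(x))$ and positivity of $x^v$.
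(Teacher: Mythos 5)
Your proposal is correct and follows exactly the paper's own argument: apply Lemma~\ref{Lemma_PoutQout} to map $P$ to $\Delta_n$, use Lemma~\ref{Lemma_TransInvariant} to track the supports of $F_{M,v,f}$, conclude convexity from Lemma~\ref{Lemma_ConvCondOut}, and transfer the convex (hence empty or contractible) sublevel set back via the homeomorphism. The two extra points you flag (convexity of the sublevel set on the convex domain $\R^n_{>0}$, and the identification of $f^{-1}(\R_{<0})$ with $g^{-1}(\R_{<0})$ under $h_M$) are handled implicitly in the paper and your treatment of them is sound.
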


\begin{proof}
By Lemma \ref{Lemma_PoutQout} with $B=\sigma_{-}(f)$ and $A=\sigma_{+}(f)$, there exists  $M \in \GL_{n}(\mathbb{R})$ and   $v \in \mathbb{R}^{n}$ such that $M \sigma_{-}(f) + v \subseteq \Delta_{n}$ and $M\sigma_{+}(f)+v \subseteq \Delta_{n}^{-}$.
By Lemma~\ref{Lemma_TransInvariant}, $\sigma_{+}(F_{M,v,f}) = M \sigma_{+}(f)+v$ and $\sigma_{-}(F_{M,v,f}) = M \sigma_{-}(f)+v$. Hence by Lemma~\ref{Lemma_ConvCondOut}, $F_{M,v,f}$ is a convex function and thus $F_{M,v,f}^{-1}(\R_{<0})$ is  either empty or contractible. By Lemma~\ref{Lemma_TransInvariant}
again, $f^{-1}(\R_{<0})$ is homeomorphic to $F_{M,v,f}^{-1}(\R_{<0})$, and the statement of the theorem follows.
\end{proof}

In view of Theorem~\ref{Thm_Convexification}, understanding $P^{-}$ for a simplex $P$ allows us to determine whether $f$ can be transformed to a convex function. 

\begin{ex}\label{Ex_Simplex}
Consider the signomial 
 \[ p_{4}(x_{1},x_{2}) = x_{1}^{5}x_{2}^{2} + x_{1}x_{2}^{5} - 2x_{1}^{3}x_{2}^{2} - 3x_{1}^{2}x_{2}^{2} + x_{1}x_{2}^{3} + x_{2}^{4} - x_{1}x_{2} + 1\]
and the simplex $P = \Conv( (1,1),(4,2),(1,3))$. We have $\sigma_-(p_4) \subseteq P$ and $\sigma_+(p_4) \subseteq P^{-}$, see Fig.~\ref{FIG5}. 
 By Theorem \ref{Thm_Convexification}, the set $p_{4}^{-1}( \R_{<0})$ is contractible, since $p_4(1,1) = -1$.
\end{ex}

\begin{figure}[t]
\centering
\begin{minipage}[h]{0.45\textwidth}

\centering
\includegraphics[scale=0.55]{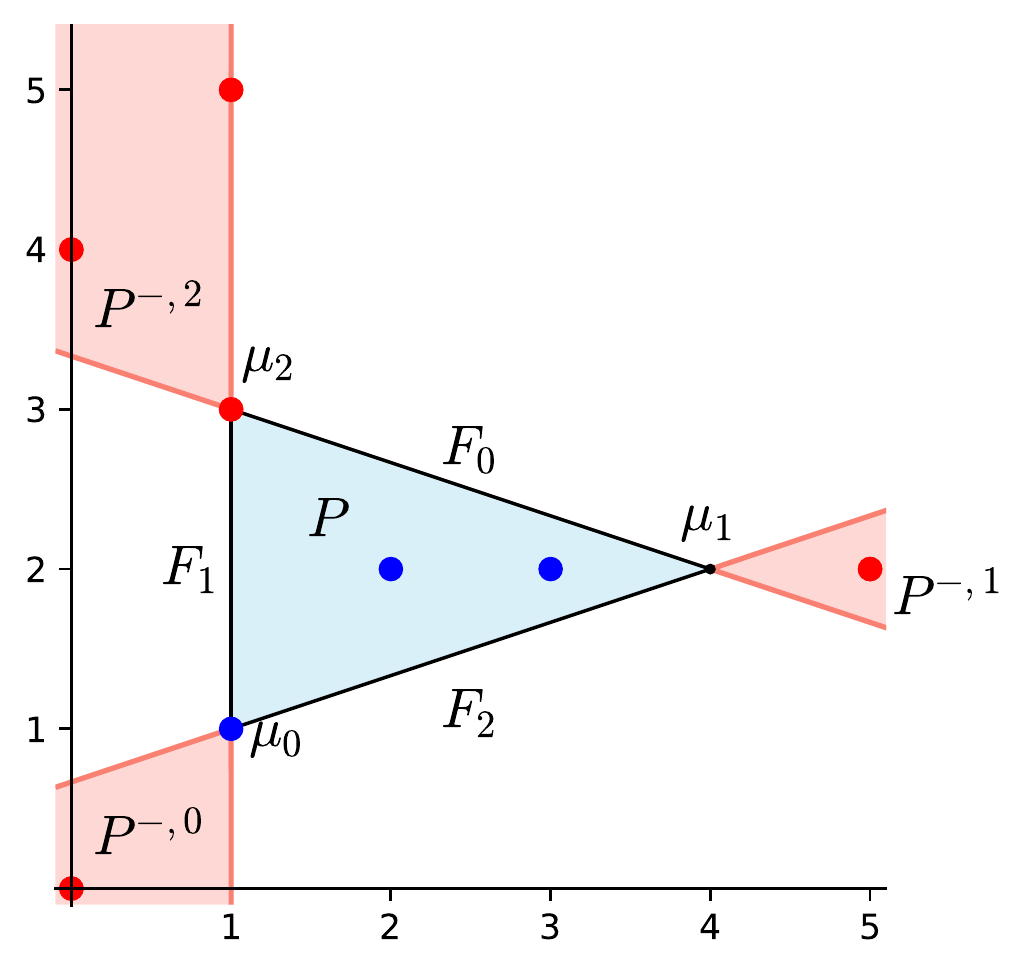}

{\small (a)}

\end{minipage}
\begin{minipage}[h]{0.45\textwidth}

\centering
\includegraphics[scale=0.55]{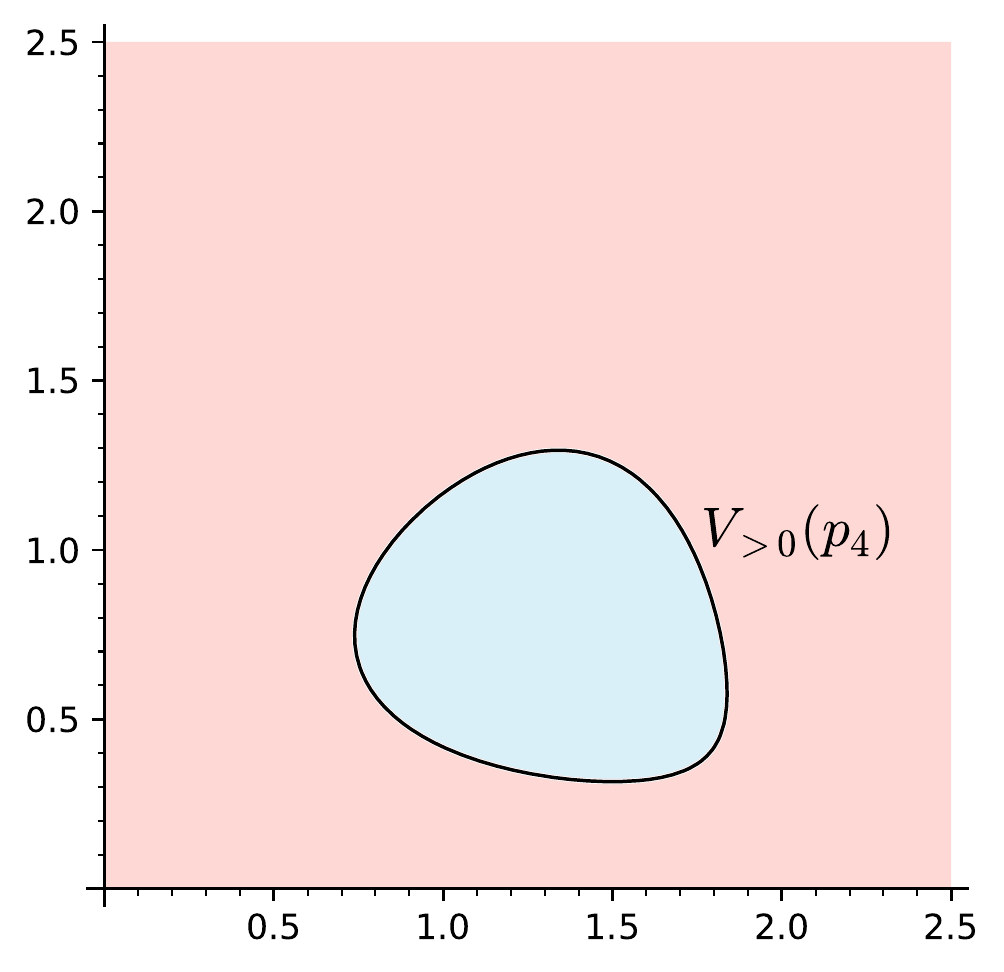}

{\small (b)}
\end{minipage}
\caption{{\small Illustration of Example~\ref{Ex_Simplex}. (a) A $2$-simplex $P$, its negative cones $P^{-}$ and the support  of $p_{4}(x_{1},x_{2})$. (b) The set $p_4^{-1}( \R_{<0})$ is shown in blue.} }\label{FIG5}
\end{figure} 

A direct consequence of Theorem~\ref{Thm_Convexification} states that if all positive points of $\sigma(f)$ are vertices of the Newton polytope and this is a simplex, then $f^{-1}( \R_{<0})$ is  either empty or contractible.
Let  $\Vertex(\N(f))$ denote the set of vertices of $\N(f)$.

\begin{cor}
\label{Cor_SimplexNewtonPoly}
Let $f\colon \mathbb{R}^{n}_{>0} \to \mathbb{R}$
be a signomial. If  $\sigma_{+}(f) \subseteq \Vertex( \N(f))$ and 
$\N(f)$ is a simplex, then $f^{-1}( \R_{<0})$ is  either empty or contractible.
\end{cor}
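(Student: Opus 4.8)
The plan is to invoke Theorem~\ref{Thm_Convexification} with the simplex $P$ taken to be $\N(f)$ itself. First suppose $\dim \N(f) = n$, so that $P := \N(f)$ is an $n$-simplex. Since $\N(f) = \Conv(\sigma(f))$, we automatically have $\sigma_-(f) \subseteq \N(f) = P$, so one of the two hypotheses of Theorem~\ref{Thm_Convexification} is free. The only thing to check is that $\sigma_+(f) \subseteq P^-$, and here the key observation is that every vertex of $P$ already lies in $P^-$: if $\mu_k$ is the $k$-th vertex of $P$, then its barycentric coordinates are $\lambda_k = 1$ and $\lambda_i = 0$ for $i \neq k$, so in particular $\lambda_i \le 0$ for all $i \neq k$, which means $\mu_k \in P^{-,k} \subseteq P^-$. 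Hence $\Vertex(\N(f)) \subseteq P^-$, and the hypothesis $\sigma_+(f) \subseteq \Vertex(\N(f))$ gives $\sigma_+(f) \subseteq P^-$. Theorem~\ref{Thm_Convexification} then yields that $f^{-1}(\R_{<0})$ is empty or contractible.

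It remains to handle the case $d := \dim \N(f) < n$, which I would reduce to the full-dimensional case by a monomial change of variables. Since $\Aff(\sigma(f))$ is a $d$-dimensional affine subspace of $\R^n$, one can pick $M \in \GL_n(\R)$ and $v \in \R^n$ with $M\sigma(f) + v \subseteq \R^d \times \{0\}^{n-d}$. By Lemma~\ref{Lemma_TransInvariant}, the signomial $F_{M,v,f}$ has all exponent vectors supported on the first $d$ coordinates, so it factors as $F_{M,v,f}(x) = g(x_1,\dots,x_d)$ for a signomial $g\colon \R^d_{>0} \to \R$. The map $M$, followed by the projection onto the first $d$ coordinates, is an affine isomorphism, so $\N(g)$ is a $d$-simplex, full-dimensional in $\R^d$; moreover, by the support identities in Lemma~\ref{Lemma_TransInvariant}, $\sigma_+(g)$ is carried to the vertex set of $\N(F_{M,v,f})$, and hence $\sigma_+(g) \subseteq \Vertex(\N(g))$. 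Applying the first part to $g$, the set $g^{-1}(\R_{<0})$ is empty or contractible. Since $F_{M,v,f}^{-1}(\R_{<0}) = g^{-1}(\R_{<0}) \times \R^{n-d}_{>0}$ and $\R^{n-d}_{>0}$ is contractible, $F_{M,v,f}^{-1}(\R_{<0})$ is empty or contractible, and so is $f^{-1}(\R_{<0})$ by the homeomorphism of Lemma~\ref{Lemma_TransInvariant}.

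I expect the main obstacle to be the reduction step rather than the full-dimensional case: one must check carefully that after the affine transformation the signomial genuinely becomes a cylinder over a $d$-variable signomial, and that ``the Newton polytope is a simplex'' passes correctly to ``$\N(g)$ is a full-dimensional simplex with positive support among its vertices.'' If the statement is read as implicitly assuming $\N(f)$ is full-dimensional (an $n$-simplex), then the first paragraph is already a complete proof.
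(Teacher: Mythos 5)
Your proposal is correct and follows essentially the same route as the paper: reduce to the full-dimensional case via a monomial change of variables (Lemma~\ref{Lemma_TransInvariant}), apply Theorem~\ref{Thm_Convexification} with $P=\N(f)$ using the fact that $\Vertex(\N(f))\subseteq \N(f)^{-}$, and lift contractibility back to $\R^n_{>0}$ by noting the preimage under the coordinate projection of a contractible set is contractible. Your explicit barycentric-coordinate check that each vertex lies in $P^{-}$ is a detail the paper takes for granted, but otherwise the arguments coincide.
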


\begin{proof}
Let $d := \dim \N(f)$ and denote by $e_{1}, \dots ,e_{d}$ the first $d$ standard basis vectors of  $\mathbb{R}^{n}$. Without loss of generality, we can assume that $\sigma(f)$ belongs to the linear subspace generated by $e_{1}, \dots ,e_{d}$ in $\mathbb{R}^{n}$, as this can be achieved via a change of variables as in Lemma \ref{Lemma_TransInvariant}. Hence $f$ depends only on the variables $x_1,\dots,x_d$, and can be seen as a signomial in $\R^d_{>0}$ with full dimensional Newton polytope. 
Viewing $V^c_{>0}(f)$ in $\R^d_{>0}$, the statement   follows from Theorem~\ref{Thm_Convexification}, since $\sigma_{+}(f) \subseteq \Vertex( \N(f) ) \subseteq \N(f)^{-}$ and $\sigma_{-}(f) \subseteq \N(f)$.
 
 The proof is  completed noticing that the pre-image of a  contractible subset of $\mathbb{R}^{d}_{>0}$ under the projection map
$(x_{1}, \dots ,x_{n}) \mapsto (x_{1}, \dots , x_{d})$ is contractible in $\mathbb{R}^{n}_{>0}$.
\end{proof}

\begin{remark}
Finding a simplex $P$ that satisfies the conditions of Theorem \ref{Thm_Convexification} might be challenging even in low dimensions. For a partition of $\sigma_+(f)$ into $n+1$ sets $\sigma_{+,0}(f), \dots, \sigma_{+,n}(f)$, Proposition \ref{Prop_GeomInt_Pout} give rise to  a system of linear inequalities that the normal vectors of the facets of $P$ need to satisfy to ensure that $\sigma_{-}(f) \subseteq P$ and $\sigma_{+,i}(f) \subseteq P^{-,i}$ for $i = 0, \dots ,n$. To verify that a solution of this system gives indeed an $n$-simplex, one can employ Lemma~\ref{Lemma_HyperplaneSimplex} below, whose proof is given for completeness.

Using these observations, the existence of a simplex $P$ satisfying the conditions of Theorem \ref{Thm_Convexification} can be established by verifying the feasibility of a system of polynomial inequalities. This can be for example achieved using quantifier elimination \cite{quan_eli}; see \cite{redlog} for an implementation.
\end{remark}

\begin{lemma}
\label{Lemma_HyperplaneSimplex}
Let $\{\mathcal{H}_{w_{0},a_{0}}, \dots, \mathcal{H}_{w_{n},a_{n}}\}$ be a set of hyperplanes of $\mathbb{R}^{n}$ such that:
\begin{itemize}
\item[$(i)$] Every proper subset of  $\{ w_{0}, \dots ,w_{n} \}$ is linearly independent.
\item[$(ii)$] For every $i \in \{0, \dots , n \}$ it holds that $\bigcap_{j=0,j\neq i}^{n} \mathcal{H}_{w_{j},a_{j}} \subseteq \mathcal{H}^{-,\circ}_{w_{i},a_{i}}$.
\end{itemize}
Then $\bigcap_{j=0}^{n} \mathcal{H}^{-}_{w_{j},a_{j}}$ is an $n$-simplex.
 \end{lemma}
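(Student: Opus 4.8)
The plan is to exhibit explicit candidate vertices $\mu_0,\dots,\mu_n$ and show that $Q:=\bigcap_{j=0}^n \mathcal H^-_{w_j,a_j}$ coincides with $\Conv(\mu_0,\dots,\mu_n)$, which will be an $n$-simplex because its $n+1$ defining vertices turn out to be affinely independent. First I would use hypothesis $(i)$: for each fixed $k\in\{0,\dots,n\}$, the $n$ vectors $\{w_j : j\neq k\}$ are linearly independent, so the affine system $\{w_j\cdot x = a_j : j\neq k\}$ has a unique solution; call it $\mu_k$. Thus $\mu_k = \bigcap_{j\neq k}\mathcal H_{w_j,a_j}$, a single point. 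By hypothesis $(ii)$, $\mu_k\in\mathcal H^{-,\circ}_{w_k,a_k}$, i.e.\ $w_k\cdot\mu_k < a_k$, so in fact $\mu_k\in Q$ and even lies in the relative interior of the facet-candidate $Q\cap\mathcal H_{w_j,a_j}$ for every $j\neq k$.

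Next I would prove the two inclusions $Q = \Conv(\mu_0,\dots,\mu_n)$. For ``$\supseteq$'': each $\mu_k$ satisfies $w_j\cdot\mu_k \le a_j$ for all $j$ (with equality for $j\neq k$ and strict inequality for $j=k$), so $\mu_k\in Q$; since $Q$ is an intersection of half-spaces it is convex, hence $\Conv(\mu_0,\dots,\mu_n)\subseteq Q$. For ``$\subseteq$'': the key linear-algebra fact I would establish is that $\{\mu_0,\dots,\mu_n\}$ is affinely independent, equivalently that $\{\mu_1-\mu_0,\dots,\mu_n-\mu_0\}$ is a basis of $\R^n$. Suppose not; then there is a nonzero affine dependence $\sum_k \lambda_k\mu_k = 0$ with $\sum_k\lambda_k = 0$. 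Pairing with $w_j$ and using $w_j\cdot\mu_k = a_j$ for $k\neq j$ together with $\sum_k\lambda_k=0$ gives $\lambda_j(w_j\cdot\mu_j - a_j) = 0$ for every $j$; since $w_j\cdot\mu_j - a_j < 0$ by $(ii)$, every $\lambda_j = 0$, a contradiction. So the $\mu_k$ are affinely independent, and $S:=\Conv(\mu_0,\dots,\mu_n)$ is a genuine $n$-simplex with the $n+1$ supporting hyperplanes of its facets being exactly $\mathcal H_{w_0,a_0},\dots,\mathcal H_{w_n,a_n}$ (the facet opposite $\mu_k$ lies in $\mathcal H_{w_k,a_k}$, and $S$ lies on the side $\mathcal H^-_{w_k,a_k}$ because $\mu_k$ does). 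A full-dimensional simplex equals the intersection of the half-spaces supporting its facets, as recalled before Proposition~\ref{Prop_GeomInt_Pout}; hence $S = \bigcap_{j=0}^n \mathcal H^-_{w_j,a_j} = Q$, finishing the proof.

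The main obstacle I expect is being careful about \emph{orientations}: a priori each $\mathcal H^-_{w_j,a_j}$ is a closed half-space, and one must check that the $\mu_k$ fall on the correct ($\le a_j$) side for all $j$, not merely that they satisfy the $n$ equalities. This is precisely where hypothesis $(ii)$ is used, and it also rules out the degenerate situation where the half-spaces would intersect in an unbounded polyhedron or a lower-dimensional set: once affine independence of the $\mu_k$ is in hand, $S$ is automatically a bounded full-dimensional simplex, and the equality $S=Q$ leaves no room for extra points. A secondary, purely bookkeeping point is to confirm that no two of the hyperplanes coincide and that each $\mathcal H_{w_j,a_j}$ really supports a facet (rather than a lower-dimensional face) of $S$ — both follow from $(i)$ and the strict inequalities in $(ii)$, so I would mention them only briefly.
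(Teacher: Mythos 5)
Your proof is correct and follows essentially the same route as the paper's: both identify the candidate vertices $\mu_k=\bigcap_{j\neq k}\mathcal{H}_{w_j,a_j}$ (well defined by $(i)$), use $(ii)$ to place $\mu_k$ strictly inside the opposite half-space, deduce affine independence of the $\mu_k$, and conclude via the fact that a full-dimensional simplex equals the intersection of the half-spaces supporting its facets. The only flaw is the parenthetical claim that $\mu_k$ lies in the \emph{relative interior} of $Q\cap\mathcal{H}_{w_j,a_j}$ for $j\neq k$ --- it is a vertex of that facet, not an interior point --- but nothing in your argument relies on this aside.
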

\begin{proof}
First, note that (ii) implies
\[ (ii') \qquad \bigcap_{j=0}^{n} \mathcal{H}_{w_{j},a_{j}} = \emptyset.\]
As a finite intersection of closed half-spaces,  $P := \bigcap_{j=0}^{n} \mathcal{H}^{-}_{w_{j},a_{j}}$ is a convex polyhedron. Each face of $P$ has the form 
\[ P_{I} = P \cap H_{I}, \qquad H_{I} = \bigcap_{i\in I}   \mathcal{H}_{w_{i},a_{i}},\] 
for some non-empty  subset $I \subseteq \{0, \dots ,n\}$.  By (i) and (ii'), $H_{I}$ is zero dimensional if and only if 
 $I$ has $n$ elements.
By (ii), for $ I =\{0, \dots ,n\} \setminus \{i\}$, $P_I \neq \emptyset$ and hence $P_I$ is a vertex of $P$, denoted by $\mu_i$. 
Furthermore, the points $\mu_0,\dots,\mu_n$ are affinely independent. This follows from (ii'), as for each $k$, $\mu_{i}\in \mathcal{H}_{w_{k},a_{k}}$ for $i\neq k$ and $\mu_{k}\notin \mathcal{H}_{w_{k},a_{k}}$.
Hence $\Conv(\mu_{0},\dots, \mu_{n})$ is  an $n$-simplex.
Finally, $P=\Conv(\mu_{0},\dots, \mu_{n})$ as $\mathcal{H}_{w_{k},a_{k}}^{-,\circ}$ contains a vertex for each $k$. 
\end{proof}

We conclude the section with Proposition~\ref{Lemma_n-1nonstrict}, which states that if there are $n-1$ linearly independent non-strict separating vectors and the convex hull of the negative points does not contain positive points, then a simplex satisfying the conditions of Theorem~\ref{Thm_Convexification} exists. This case, together with the scenario with one negative point in Theorem~\ref{Thm_OneExponent} or the existence of a strict separating vector in Theorem~\ref{Thm_StrictSepConnected}, conform the situations where one can effectively conclude that $V_{>0}^c(f)$ has at most one negative connected component. 
\color{black}

\begin{prop}
\label{Lemma_n-1nonstrict}
Let $f\colon \mathbb{R}^{n}_{>0} \to \mathbb{R}$ be a signomial,
such that $\sigma(f)$ has at least two negative points. 
Assume that there exist $n-1$ linearly independent  separating vectors of $\sigma(f)$, which are not strict and that $\Conv(\sigma_{-}(f)) \cap \sigma_{+}(f) = \emptyset$.  
Then there exists an $n$-simplex $P$ such that $\sigma_{-}(f) \subseteq P$ and $\sigma_{+}(f) \subseteq P^{-}$.
\end{prop}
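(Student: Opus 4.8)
The plan is to use the $n-1$ linearly independent non-strict separating vectors to produce $n-1$ of the $n+1$ facet hyperplanes of the desired simplex $P$, and then to construct the remaining two facets by hand, using the hypothesis $\Conv(\sigma_-(f))\cap\sigma_+(f)=\emptyset$. After a monomial change of variables as in Lemma~\ref{Lemma_TransInvariant}, I would first normalize the setup: let $v_1,\dots,v_{n-1}$ be the given linearly independent non-strict separating vectors. Because each $v_i$ is a separating but \emph{non-strict} vector, \eqref{eq:Salg} gives $\max_{\alpha\in\sigma_+(f)}v_i\cdot\alpha=\min_{\beta\in\sigma_-(f)}v_i\cdot\beta=:a_i$, so \emph{all} of $\sigma(f)$ lies in the slab between $\mathcal H_{v_i,a_i}$ on the $\sigma_-$ side, and in fact $\sigma_+(f)\subseteq \mathcal H_{v_i,a_i}^{-}$, $\sigma_-(f)\subseteq\mathcal H_{v_i,a_i}^{+}$, with equality forced on the extremal points. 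It is convenient to choose coordinates (via $M\in\GL_n(\R)$) so that $v_i=e_i$ for $i=1,\dots,n-1$ and $a_i=0$; then every point of $\sigma(f)$ has its first $n-1$ coordinates $\geq 0$ for negative points and $\leq 0$ for positive points, and the last coordinate is free. I would then argue that $\sigma_-(f)$ actually lies in the affine subspace where the first $n-1$ coordinates vanish — no, that is too strong; rather, $\sigma_-(f)$ lies in the orthant-like region $\{x_1\geq 0,\dots,x_{n-1}\geq 0\}$ and, since each $v_i$ is non-strict, the \emph{minimum} of each $x_i$ over $\sigma_-(f)$ is $0$.

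Next I would build the simplex. The $n-1$ hyperplanes $\mathcal H_{e_i,0}$, $i=1,\dots,n-1$, will serve as $n-1$ of the facet hyperplanes of $P$; their common intersection is the line $L=\{x_1=\dots=x_{n-1}=0\}$, which is one edge of the simplex-to-be, and on this edge lie two of the vertices $\mu_{n-1},\mu_n$ (say). The negative points $\sigma_-(f)$ lie on the correct ($+$) side of each $\mathcal H_{e_i,0}$, while positive points lie on the ($-$) side; by Proposition~\ref{Prop_GeomInt_Pout} this is exactly the condition that, for the simplex $P=\bigcap_{j=0}^n\mathcal H^-_{w_j,a_j}$ we are constructing, the positive points $\alpha$ with $\alpha_i<0$ strictly for some $i$ will land in the appropriate negative vertex cone $P^{-,k}$ for a vertex $\mu_k$ \emph{not} on the facet $\mathcal H_{e_i,0}$. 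The two remaining facets must be chosen so that (a) together with the $\mathcal H_{e_i,0}$ they bound a genuine $n$-simplex (I would verify this via Lemma~\ref{Lemma_HyperplaneSimplex}), (b) all of $\sigma_-(f)$ is inside, and (c) every $\alpha\in\sigma_+(f)$ lands in \emph{some} negative vertex cone. For (b) and the remaining positive points — those with all first $n-1$ coordinates equal to $0$, which live on $L$ — I would use the hypothesis $\Conv(\sigma_-(f))\cap\sigma_+(f)=\emptyset$: restricted to the line $L$, the negative points project to a segment (or point) and the positive points lying on $L$ avoid it, so a pair of parallel hyperplanes transverse to $L$ can be slid in to sandwich $\Conv(\sigma_-(f))$ while throwing every $L$-positive point into one of the two cones $P^{-,n-1}$ or $P^{-,n}$ at the two endpoints $\mu_{n-1},\mu_n$ of the edge $L$. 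One then tilts these two hyperplanes slightly (keeping them transverse to $L$) to simultaneously catch, in the appropriate vertex cones, the positive points that are off $L$, using that those points are bounded away from $\Conv(\sigma_-(f))$ in the $e_i$-directions. A compactness/finiteness argument (finitely many points, strict inequalities) guarantees that a small enough tilt works for all of them at once.

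The main obstacle I anticipate is step (c) combined with the simplex-verification: ensuring that the \emph{two} constructed facets, after tilting, still bound a simplex (condition (i)–(ii) of Lemma~\ref{Lemma_HyperplaneSimplex}) \emph{and} that every positive point — both those off $L$ and those on $L$ — simultaneously lands in a valid negative vertex cone. The subtlety is that a positive point $\alpha$ off $L$ already satisfies $v_i\cdot\alpha\leq 0$ for the "slab" facets, i.e. it is on the $\mathcal H^+$ side of each such facet \emph{as seen from the vertex cone}; Proposition~\ref{Prop_GeomInt_Pout} then tells us $\alpha\in P^{-,k}$ provided $\alpha$ is on the $\mathcal H^+$ side of \emph{all} facets through some vertex $\mu_k$, which forces $\alpha$ to be on the $\mathcal H^-$ side of exactly one facet. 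So I must choose the two extra facets so that each positive point violates exactly one of the $n+1$ facet inequalities, and never two. This is where the hypothesis $\Conv(\sigma_-(f))\cap\sigma_+(f)=\emptyset$ is essential and where the bulk of the careful (but elementary) estimates will go; I would organize it by first handling the generic positive points via the $e_i$-facets, and then choosing the two transverse facets by a limiting argument from a "thin slab" around $\Conv(\sigma_-(f))\cap L$. Once $P$ is built, the conclusion $\sigma_-(f)\subseteq P$, $\sigma_+(f)\subseteq P^-$ is immediate and Theorem~\ref{Thm_Convexification} applies.
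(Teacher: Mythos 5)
Your overall strategy is the one the paper itself follows: keep the $n-1$ hyperplanes $\mathcal{H}_{w_i,a_i}$ coming from the non-strict separating vectors as facet hyperplanes, add two further hyperplanes transverse to the line $L=\bigcap_{i=1}^{n-1}\mathcal{H}_{w_i,a_i}$, tilt them so that Lemma~\ref{Lemma_HyperplaneSimplex} applies and the positive points off $L$ fall into negative vertex cones via Proposition~\ref{Prop_GeomInt_Pout}, and invoke $\Conv(\sigma_{-}(f))\cap\sigma_{+}(f)=\emptyset$ only for the positive points lying on $L$. There is, however, one genuine gap, and it occurs exactly where you hesitate. For a separating vector $v$ with $a=\max_{\alpha\in\sigma_{+}(f)}v\cdot\alpha$, being separating forces $v\cdot\beta\geq a$ for every $\beta\in\sigma_{-}(f)$, and \emph{strict} means some negative point satisfies $v\cdot\beta>a$; hence \emph{non-strict} means $v\cdot\beta=a$ for \emph{all} negative points, i.e.\ $\sigma_{-}(f)\subseteq\mathcal{H}_{w_i,a_i}$ for each $i$, so $\sigma_{-}(f)$ lies on the line $L$. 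This is precisely the statement you dismiss as ``too strong''. Your replacement (``the minimum of each $x_i$ over $\sigma_{-}(f)$ is $0$'') is strictly weaker and does not support the later steps: if the negative points merely sat in $\{x_1\geq0,\dots,x_{n-1}\geq0\}$, a positive point on $L$ could lie in the projection of $\Conv(\sigma_{-}(f))$ onto $L$ without lying in $\Conv(\sigma_{-}(f))$, and the hypothesis would no longer let you sandwich all negative points between the two transverse hyperplanes while expelling every on-$L$ positive point into one of the two vertex cones along $L$.

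Once $\sigma_{-}(f)\subseteq L$ is restored, your plan closes essentially as in the paper: $\Conv(\sigma_{-}(f))$ is a nondegenerate segment $[\beta_0,\beta_1]$ of $L$ (this is where the hypothesis of at least two negative points enters; your parenthetical ``(or point)'' would degenerate the construction if the two transverse hyperplanes are taken through the endpoints), and the two extra facet hyperplanes are taken through $\beta_0$ and $\beta_1$ with normals $\pm v$ tilted by small distinct positive multiples $\lambda>\mu$ of a direction $z\notin\mathrm{span}(w_1,\dots,w_{n-1})$, where $v=\sum_i w_i$. The admissible size of the tilt is controlled by the positive gap $d-\max\{v\cdot\alpha \mid \alpha\in\sigma_{+}(f),\ v\cdot\alpha\neq d\}$ with $d=\sum_i a_i$, which is the precise form of your ``bounded away in the $e_i$-directions'' estimate, and $\lambda\neq\mu$ is what makes condition (ii) of Lemma~\ref{Lemma_HyperplaneSimplex} checkable. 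So the architecture is correct and matches the paper; the proof stands or falls on reading ``non-strict'' correctly.
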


\begin{proof} 
Let $w_{1},\dots,w_{n-1}$ be non-strict separating vectors. 
Then with   $a_{i} := \max\{w_{i}\cdot \alpha \mid \alpha \in \sigma_{+}(f)\}$, 
it holds 
\begin{align}
\label{Align_IntersWi}
\sigma_{+}(f) \subseteq  \bigcap\limits_{i = 1}^{n-1} \mathcal{H}^{-}_{w_{i},a_{i}} \quad \text{and}  \quad \sigma_{-}(f) \subseteq  L \qquad \textrm{with} \quad L:=\bigcap\limits_{i = 1}^{n-1} \mathcal{H}_{w_{i},a_{i}}.
\end{align}

If $\sigma_{-}(f) \subseteq  L$, then any simplex $P$ having as edge $\Conv(\sigma_{-}(f))$ satisfies the statement. 
Hence, we assume that this is not the case. We prove the proposition by applying Lemma \ref{Lemma_HyperplaneSimplex}. We introduce the following:
\begin{align*}
v &:= \sum_{i=1}^{n-1}w_{i} \in \R^{n-1},&   d &:= \sum_{i=1}^{n-1}a_{i}\in \R , & 
K &:= \max\, \{ v\cdot \alpha \mid \alpha \in \sigma_{+}(f),\ v\cdot\alpha \neq d\} \in \R.
\end{align*}
By assumption, $\epsilon := d - K >0$ and we have $\sigma_-(f)\subseteq \mathcal{H}_{v,d}$. 
 Let $z \in \mathbb{R}^{n}$   such that $z,w_{1},\dots,w_{n-1}$ are linearly independent, and denote by $\beta_{0}$,  $\beta_1$ the vertices of $\Conv( \sigma_{-}(f))$ where the linear form induced by $z$ attains its minimum  and its maximum respectively.  These vertices are different, otherwise each $\beta \in \Conv(\sigma_{-}(f))$ would be the  unique solution of $z \cdot \beta  = z \cdot \beta_{0} $, $w_{i} \cdot \beta = a_{i}$, $i=1,\dots ,n-1$. This would be a contradiction, since $\sigma_{-}(f)$ contains at least two points.

We let $M := \max\{ z \cdot \alpha \mid \alpha \in \sigma_{+}(f)\}$, choose $\lambda > \mu$ positive real numbers such that 
\begin{align} \label{eq:lambdamu}
\lambda(M-z\cdot \beta_{0}) \leq \epsilon &= d - K,  & 
\mu(M-z\cdot \beta_1) \leq \epsilon &= d - K,
\end{align}
and define $w_{0} := v+\lambda z$,  $w_{n} := -v-\mu z$,  $a_{0} := d + \lambda (z\cdot \beta_{0})$,  and $a_{n} := -d - \mu (z\cdot \beta_1)$. By construction, $\beta_0\in \mathcal{H}_{-w_{0},-a_{0}}$ and $\beta_1\in \mathcal{H}_{-w_{n},-a_{n}}$.

We show that $P := \bigcap_{i=0}^{n} \mathcal{H}^{-}_{-w_{i},-a_{i}}$ is an $n$-simplex using Lemma \ref{Lemma_HyperplaneSimplex}, and  satisfies the hypotheses of the statement. Lemma \ref{Lemma_HyperplaneSimplex}(i) holds by construction.
To show Lemma \ref{Lemma_HyperplaneSimplex}(ii), we consider first $i \in \{0,n\}$. As 
\begin{equation}\label{eq:0n}
\bigcap_{j=0}^{n-1} \mathcal{H}_{-w_{j},-a_{j}} = \{ \beta_{0} \}, \qquad \bigcap_{j=1}^{n} \mathcal{H}_{-w_{j},-a_{j}} = \{ \beta_1 \},
\end{equation} it suffices to show that  $\beta_{0} \in \mathcal{H}^{-,\circ}_{-w_{n},-a_{n}}$  and $\beta_1 \in \mathcal{H}^{-,\circ}_{-w_{0},-a_{0}}$. For each $ \beta \in \sigma_{-}(f)$, it holds that 
\begin{align}
w_{n} \cdot \beta & = -v\cdot \beta - \mu (z \cdot \beta) \geq -d -\mu (z \cdot \beta_1 )= a_{n}, \quad \text{and} \label{Align_WnBetaCn}\\
w_{0} \cdot \beta & = v\cdot \beta + \lambda (z \cdot \beta) \geq d + \lambda (z \cdot \beta_{0}) = a_{0}  \label{Align_WnBetaC0}
\end{align}
as  $z$ attains its minimum resp. its maximum on $\Conv( \sigma_{-}(f))$ at $\beta_{0}$ resp. at $\beta_1$ and $\lambda, \mu > 0$. From these we get that $\beta_{0} \in \mathcal{H}^{-,\circ}_{-w_{n},-a_{n}}$  and $\beta_1 \in \mathcal{H}^{-,\circ}_{-w_{0},-a_{0}}$, since $z \cdot \beta_{1} > z \cdot \beta_{0}$ and hence the inequalities in \eqref{Align_WnBetaCn} and \eqref{Align_WnBetaC0} are strict.   

Consider now $i \in \{ 1, \dots , n-1 \}$ and $x \in \bigcap_{j=0,j\neq i}^{n} \mathcal{H}_{-w_{i},-a_{i}}$. 
In particular, $x \in \mathcal{H}_{w_{0},a_{0}} \cap \mathcal{H}_{w_{n},a_{n}}$. 
Solving the linear system $w_{0} \cdot x = v \cdot x + \lambda (z \cdot x) = a_{0}$ and $w_{n} \cdot x = -v \cdot x - \mu (z \cdot x) = a_{n}$ for $v \cdot x$ and $z \cdot x$ and using the definition of $a_0,a_n$, we obtain
\[ z \cdot x = \tfrac{a_{0} + a_{n}}{\lambda - \mu},\qquad v \cdot x = a_{0} -  \lambda \cdot \tfrac{a_{0} + a_{n}}{\lambda - \mu} = 
d +  \tfrac{\lambda \mu}{\lambda - \mu} ( z \cdot \beta_1 - z \cdot \beta_{0} ) > d,\] 
as $\lambda,\mu, \lambda - \mu, z \cdot \beta_1 - z \cdot \beta_{0} > 0$.
Hence 
\[ \sum\limits_{j=1}^{n-1} w_{j} \cdot x = v  \cdot x > d = \sum\limits_{j=1}^{n-1} a_{j}.\]
From this follows that $w_{i} \cdot x > a_{i}$, since $w_{j} \cdot x = a_{j}$ for $j \neq i$. Therefore $x \in \mathcal{H}^{-,\circ}_{-w_{i},-a_{i}}$ and Lemma \ref{Lemma_HyperplaneSimplex}(ii) holds. 
We conclude that  $P$ is an $n$-simplex. 
 
Finally, we show that  $\sigma_{-}(f) \subseteq P$ and  $\sigma_{+}(f) \subseteq P^{-}$. The inclusion $\sigma_{-}(f) \subseteq P$ follows  from \eqref{Align_IntersWi}, \eqref{Align_WnBetaCn} and \eqref{Align_WnBetaC0}.
 
Let $\alpha \in \sigma_{+}(f)$ and  assume that $v\cdot \alpha < d$. By \eqref{eq:lambdamu},
\[ w_{0} \cdot \alpha = v\cdot \alpha + \lambda (z \cdot \alpha) \leq K + \lambda M = d- \epsilon +\lambda M \leq d + \lambda (z\cdot \beta_{0}) = a_{0},\] 
which implies $\alpha \in \mathcal{H}^{+}_{-w_{0},-a_{0}}$. 
This together with (\ref{Align_IntersWi}) imply that  $\alpha \in P^{-}$ by  Proposition \ref{Prop_GeomInt_Pout}.
 
Now, consider the case $v\cdot \alpha = d$.  In this case,  (\ref{Align_IntersWi}) implies that $w_{i}\cdot \alpha = a_{i}$ for  each $i=1,\dots,n-1$. Thus, $\alpha \in L $ and recall $\alpha\notin \Conv(\sigma_{-}(f) )$. Hence $\alpha\in L\setminus  \Conv(\sigma_{-}(f)) \subseteq P^{-}$, where the last inclusion follows from the fact that the supporting hyperplanes of each cone  $P^{-,k}$ are supporting hyperplanes of $P$. 
\end{proof}

\begin{figure}[t]
\begin{center}

\begin{minipage}[b]{0.5\linewidth}
\centering
\includegraphics[scale=0.5]{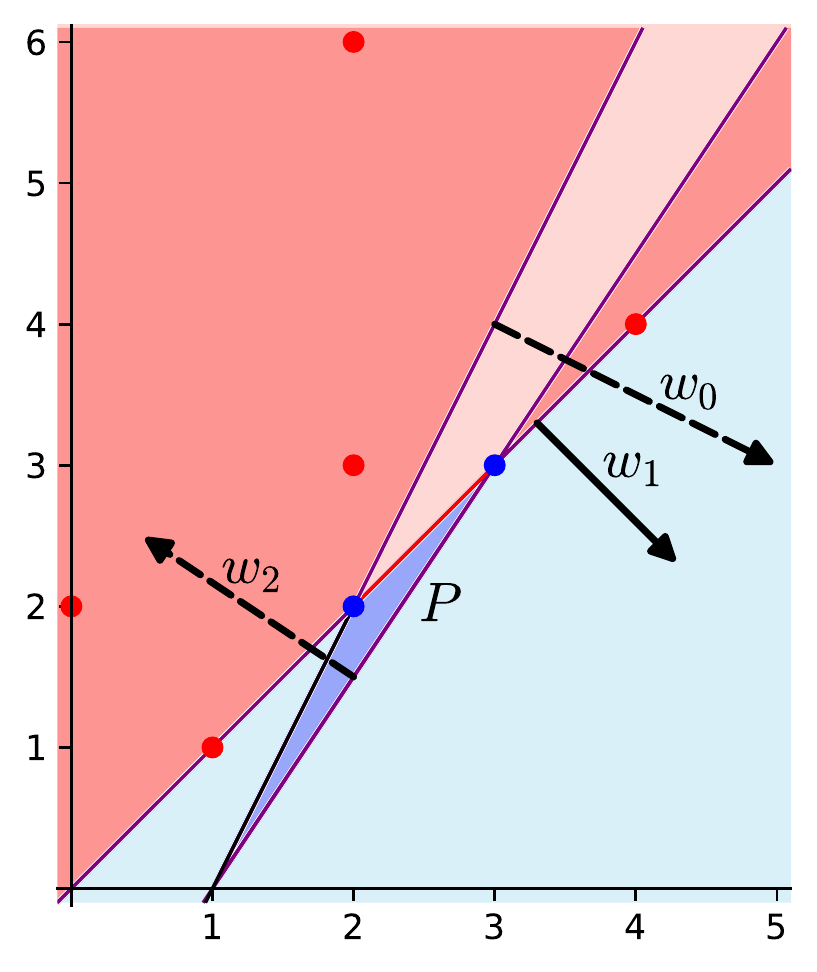}

{\small (a)}
\end{minipage}
\begin{minipage}[b]{0.3\linewidth}
\centering
\includegraphics[scale=0.4]{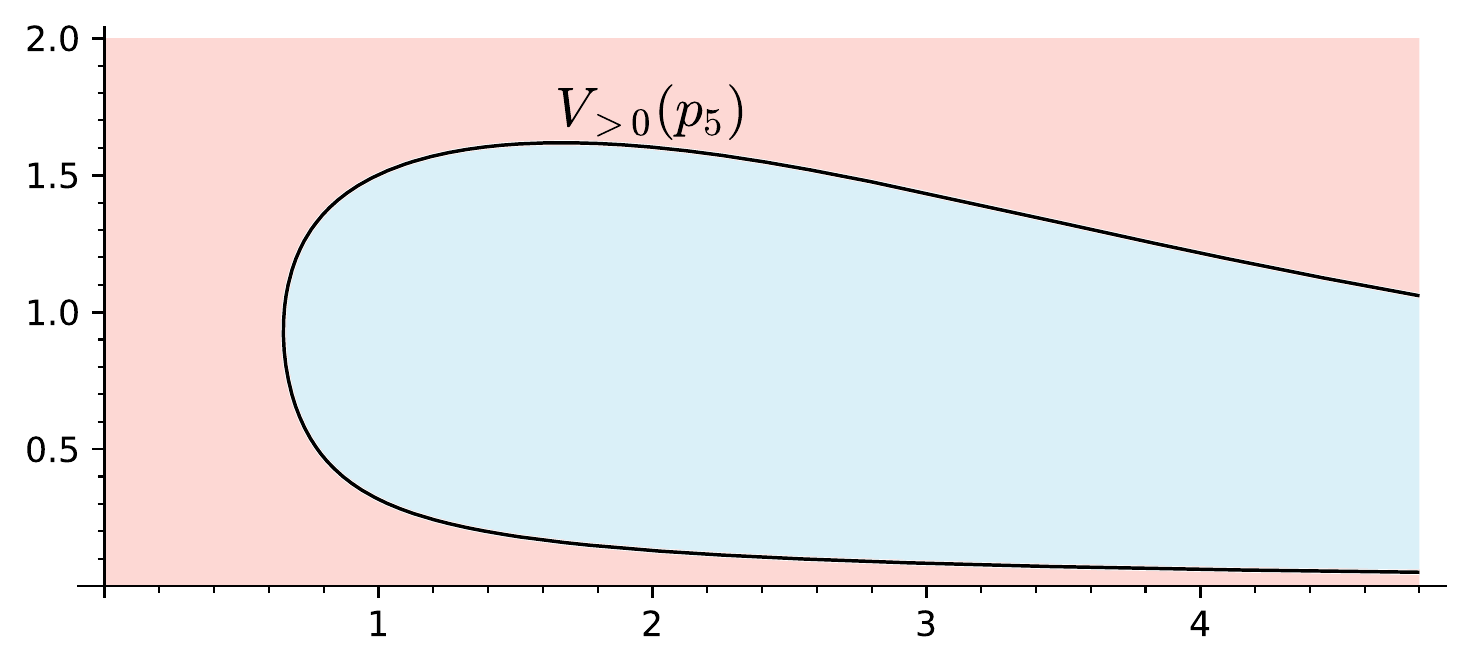}

{\small (b)}
\end{minipage}
\end{center}
\caption{{\small Illustration of Example \ref{Ex_Thm6}.  (a) Shows $\sigma(p_5)$ with blue indicating negative points and red positive points. The vector $w_{1} = (1,-1)$ is a non-strict separating vector of the support of $p_{5}$. (b) The negative connected component of $V_{>0}^{c}(p_{5})$ is shown in blue. 
}}\label{FIG2}
\end{figure} 

\begin{ex}
\label{Ex_Thm6}
Consider the signomial 
\[ p_{5}(x_{1},x_{2}) = x_{1}^{4}x_{2}^{4} + x_{1}^{2}x_{2}^{6}+ x^2 y^3 - 5x_{1}^{3}x_{2}^{3} - 3x_{1}^{2}x_{2}^{2} + x_{1}x_{2} + x_{2}^{2},\]
with $\sigma(p_5)$ depicted in Fig.~\ref{FIG2}(a). 
The vector $w_{1} = (1,-1)$ is a separating vector of $\sigma(p_5)$. The convex hull of $\sigma_{-}(p_{5})$ does not intersect $\sigma_{+}(p_{5})$ as we can see from Fig.~\ref{FIG2}(a). Hence, we can use Proposition~\ref{Lemma_n-1nonstrict} to conclude that there exists a simplex $P$ such that $\sigma_{-}(p_{5}) \subset P$ and $\sigma_{+}(p_{5}) \subset P^{-}$. In fact, the proof Proposition~\ref{Lemma_n-1nonstrict} is constructive, the corresponding $P$ is depicted also in Fig.~\ref{FIG2}(a). Now, we can apply Theorem \ref{Thm_Convexification} to conclude that $f^{-1}(\mathbb{R}_{<0})$ is contractible.
\end{ex}

\section{Comparing the different approaches}
\label{Section_Discussion}

Theorems  \ref{Thm_OneExponent}, \ref{Thm_StrictSepConnected}, \ref{Thm_StripAroundPosExp}, \ref{Thm_Convexification} cover some  cases of a generalization of Descartes' rule of signs to hypersurfaces. 
In particular, we have shown that $f^{-1}(\R_{<0})$ is   contractible in the following relevant cases:
\begin{itemize}
\item $f$ has at most one negative point in $\sigma(f)$.
\item There exists a strict separating vector of $\sigma(f)$.
\item There exists a simplex $P$ such that negative points of $\sigma(f)$ belong to $P$ and positive points to $P^{-}$; in particular if all positive points are vertices of the Newton polytope and this is a simplex, or if there are $n-1$ linearly independent non-strict separating vectors and the convex hull of the negative points does not contain positive points.
\end{itemize}

The techniques to study the case where $f^{-1}(\R_{<0})$ is path connected could also be used to derive a condition for $f^{-1}(\R_{<0})$ having at most two connected components:
\begin{itemize}
\item  There exists a strict enclosing vector of $\sigma(-f)$; in particular if the positive points belong to a hyperplane that does not contain all negative points, or if the number of positive points is smaller than $\dim \N(f)$. 
\end{itemize}

Theorem \ref{Thm_Convexification} covers all the cases where the classical Descartes' rule guarantees that the number of negative connected components of $V^{c}_{>0}(f)$ is at most one. These are the cases when the coefficients of the one-variable signomial $f$ has one of the following sign patterns:
\begin{align*}
(-\dots -&+ \dots +) &
(+ \dots +&-\dots - ) &
(+ \dots +-&\dots -+ \dots +) \, .
\end{align*}

\medskip
Although Theorem \ref{Thm_StrictSepConnected},  and \ref{Thm_Convexification} build apparently   on different techniques, we show in this section that they are equivalent in some situations.  Computationally, checking whether Theorem~\ref{Thm_StrictSepConnected} applies is less demanding  than to verifying that the conditions of Theorem~\ref{Thm_Convexification} hold.
 
\medskip

We start by noting that Theorem~\ref{Thm_Convexification} applies for the signomial $p_{4}$ in Example~\ref{Ex_Simplex},  but $\sigma(p_{4})$ does not have any separating vector.
However, under some assumptions,  the existence of an $n$-simplex as in Theorem \ref{Thm_Convexification} implies the existence of a  separating vector.
 
\begin{prop}
Let $f\colon \mathbb{R}^{n}_{>0} \to \mathbb{R}$ be a signomial and let $P \subseteq \mathbb{R}^{n}$ be an $n$-simplex such that $\sigma_{-}(f) \subseteq P$ and $\sigma_{+}(f) \subseteq P^{-}$. If there exists $k \in \{0,\dots,n\}$ such that  $ P^{-,k}\cap \sigma_{+}(f)=\emptyset$,  then $\sigma(f)$ has a separating vector. Moreover, there is a strict separating vector if there is a negative point in $P\setminus F_{k}$, where $F_{k}$ denotes the facet of $P$ opposite to $P^{-,k}$.
\end{prop}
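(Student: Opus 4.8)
The plan is to produce an explicit separating vector of $\sigma(f)$, taken as minus one of the facet normals of $P$. Following the proof of Proposition~\ref{Prop_GeomInt_Pout}, I would write $\mu_0,\dots,\mu_n$ for the vertices of $P$, let $F_i$ be the facet of $P$ not containing $\mu_i$, supported on $\mathcal{H}_{v_i,a_i}$ with $P\subseteq\mathcal{H}_{v_i,a_i}^{-}$, so that $v_j\cdot\mu_i=a_j$ for $i\neq j$, $v_i\cdot\mu_i<a_i$, and $P=\bigcap_{i=0}^{n}\mathcal{H}_{v_i,a_i}^{-}$. Since $F_k$ is the facet opposite $P^{-,k}$, the claim will be that $v:=-v_k$ and $a:=-a_k$ give a separating vector in the sense of Definition~\ref{Def_Strip}(i); one may assume $\sigma_+(f)\neq\emptyset$, since otherwise both assertions are immediate.

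First I would check the two defining inclusions. Every $\beta\in\sigma_-(f)$ lies in $P\subseteq\mathcal{H}_{v_k,a_k}^{-}$, hence $(-v_k)\cdot\beta\ge-a_k$, so $\sigma_-(f)\subseteq\mathcal{H}_{-v_k,-a_k}^{+}$. For $\alpha\in\sigma_+(f)$, the hypotheses $\sigma_+(f)\subseteq P^{-}$ and $P^{-,k}\cap\sigma_+(f)=\emptyset$ force $\alpha\in P^{-,j}$ for some $j\neq k$, so Proposition~\ref{Prop_GeomInt_Pout} yields $\alpha\in\mathcal{H}_{v_i,a_i}^{+}$ for all $i\neq j$, and in particular $v_k\cdot\alpha\ge a_k$ since $k\neq j$; thus $\sigma_+(f)\subseteq\mathcal{H}_{-v_k,-a_k}^{-}$. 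Equivalently, in the language of \eqref{eq:Salg}, $\max_{\alpha\in\sigma_+(f)}(-v_k)\cdot\alpha\le-a_k\le\min_{\beta\in\sigma_-(f)}(-v_k)\cdot\beta$, so $-v_k\in\mathcal{S}^{-}(f)$, which settles the first assertion.

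For the strictness clause I would use the characterization that a separating vector $v$ is strict precisely when some $\beta_0\in\sigma_-(f)$ satisfies $\max_{\alpha\in\sigma_+(f)}v\cdot\alpha<v\cdot\beta_0$; for $v=-v_k$ this reads $v_k\cdot\beta_0<\min_{\alpha\in\sigma_+(f)}v_k\cdot\alpha$. The bound obtained above gives $\min_{\alpha\in\sigma_+(f)}v_k\cdot\alpha\ge a_k$, so it is enough to find a negative point $\beta_0$ with $v_k\cdot\beta_0<a_k$. Since $F_k=P\cap\mathcal{H}_{v_k,a_k}$ and $P\subseteq\mathcal{H}_{v_k,a_k}^{-}$, one has $P\setminus F_k=\{x\in P\mid v_k\cdot x<a_k\}$, so any negative point of $\sigma(f)$ lying in $P\setminus F_k$ works, and then $-v_k$ is a strict separating vector.

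I do not expect a genuine obstacle here: the whole argument is a translation through Proposition~\ref{Prop_GeomInt_Pout} and the algebraic description \eqref{eq:Salg} of $\mathcal{S}^{-}(f)$. The only thing to be careful about is the sign bookkeeping---the negative points, lying in $P$, sit on the $\mathcal{H}_{v_k,a_k}^{-}$ side of $F_k$ while the positive points sit on the $\mathcal{H}_{v_k,a_k}^{+}$ side, so one must separate with $-v_k$ rather than $v_k$ to match the orientation fixed in Definition~\ref{Def_Strip}(i)---together with the degenerate case $\sigma_+(f)=\emptyset$, dealt with at the outset.
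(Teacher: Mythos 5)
Your proposal is correct and follows essentially the same route as the paper's proof: take $-v_k$, the negated outer normal of the facet $F_k$ opposite $P^{-,k}$, note that $\sigma_-(f)\subseteq P\subseteq\mathcal{H}^{-}_{v_k,a_k}$ while Proposition~\ref{Prop_GeomInt_Pout} together with $P^{-,k}\cap\sigma_+(f)=\emptyset$ forces $\sigma_+(f)\subseteq\mathcal{H}^{+}_{v_k,a_k}$, and obtain strictness from a negative point off $F_k$. The only difference is that you spell out the intermediate steps (and the trivial case $\sigma_+(f)=\emptyset$) in more detail than the paper does.
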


\begin{proof}
Let $ \mathcal{H}_{v_{k},a_{k}}$ be a supporting hyperplane for  the facet $F_{k}$. 
By hypothesis and from Proposition \ref{Prop_GeomInt_Pout} we obtain $\sigma_{+}(f) \subseteq \mathcal{H}^{+}_{v_{k},a_{k}}$. 
 By hypothesis we also have that 
  $\sigma_{-}(f) \subseteq P \subseteq \mathcal{H}_{v_{k},a_{k}}^{-}$. 
  Therefore, $-v_{k}$ is a separating vector of $\sigma(f)$. 
 If there is a negative point $\beta \notin F_{k}$, then $v_{k} \cdot \beta < a_{k}$ giving that  $-v_{k}$ is strict.
\end{proof}

We inspect now whether or when Theorem  \ref{Thm_StrictSepConnected} follows from  Theorem \ref{Thm_Convexification}, in which case we obtain the additional information that $f$ can be transformed into a convex signomial.
The existence of a strict separating vector does not imply the existence of an $n$-simplex satisfying the condition in Theorem \ref{Thm_Convexification}. To see this, we consider the signomial $p_{2}$ in Example \ref{Ex_Thm10}. The positive point $(3,4)$ lies in $\Conv( \sigma_{-}(p_{2}))$, and is not a vertex. Therefore, there is no $n$-simplex $P$ such that $\sigma_{-}(p_{2}) \subseteq P$ and $(3,4) \in P^{-}$.

However, if there exists a very strict separating vector, then   there is an $n$-simplex satisfying the conditions in Theorem~\ref{Thm_Convexification} and Theorem~\ref{Thm_StrictSepConnected} follows from it. For an example, see Fig.~\ref{FIG6}.

\begin{figure}[t]
\centering
\includegraphics[width=0.4\textwidth]{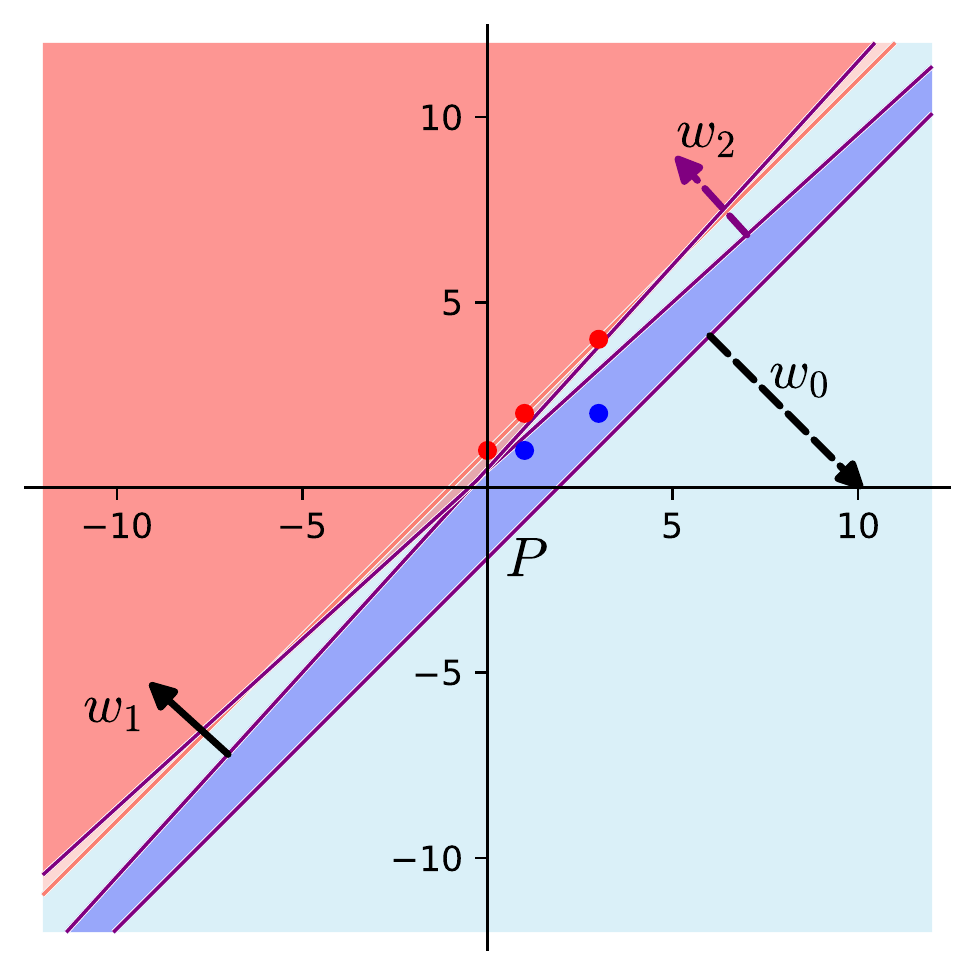}
\caption{{\small The support of the signomial  $\tilde{p}_{2}$ in Example \ref{Ex_Thm10} has a very strict  separating vector as in Proposition~\ref{Lemma_VeryStrictSepandPout}, namely $v = (1,-1)$. The $2$-simplex $P$ shown in blue is constructed following the proof of Proposition~\ref{Lemma_VeryStrictSepandPout} with the choice $v_{1}, = (1,0), v_{2} = (0,-1)$, $a_{0} = 4$.}} \label{FIG6}
\end{figure}

\begin{prop}
\label{Lemma_VeryStrictSepandPout}
Let $f\colon \mathbb{R}^{n}_{>0} \to \mathbb{R}$ be a signomial.
If there is a very strict separating vector $v \in \mathbb{R}^{n}$ of $\sigma(f)$,
then there exists an $n$-simplex $P$ such that $\sigma_{-}(f) \subseteq P$ and $\sigma_{+}(f) \subseteq P^{-}$. 
\end{prop}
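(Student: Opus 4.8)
The plan is to construct the $n$-simplex explicitly from the very strict separating vector. First I would set up the separation: since $v$ is very strict, by the description \eqref{eq:Salg} and Definition~\ref{Def_Strip}(i) I may fix a real number $a$ with
\[ \max_{\alpha\in\sigma_{+}(f)} v\cdot\alpha \;<\; a \;<\; \min_{\beta\in\sigma_{-}(f)} v\cdot\beta . \]
Here $\sigma_{-}(f)\neq\emptyset$ because strictness forces a negative point lying strictly above $\max_{\alpha} v\cdot\alpha$; and if $\sigma_{+}(f)=\emptyset$ the statement is trivial (any $n$-simplex containing the finite set $\sigma_{-}(f)$ works), so I may also assume $\sigma_{+}(f)\neq\emptyset$. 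Now pick any $\mu_{0}\in\R^{n}$ with $v\cdot\mu_{0}=a$. Then every vector of the finite set
\[ S:=\{\,\beta-\mu_{0}\mid\beta\in\sigma_{-}(f)\,\}\cup\{\,\mu_{0}-\alpha\mid\alpha\in\sigma_{+}(f)\,\} \]
is nonzero and has strictly positive inner product with $v$. This is the single point where \emph{very} strictness (not just strictness) is essential: if a negative point lay on $\mathcal{H}_{v,a}$ it could not be kept strictly on the side of $\mathcal{H}_{v,a}$ opposite to the positive points.

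The geometric heart of the argument is to enclose $S$ in a simplicial cone lying in the half-space $\mathcal{H}^{+}_{v,0}$. Rescaling each $y\in S$ by the positive factor $1/(v\cdot y)$, which does not change membership in a cone, I may assume $S$ lies in the affine hyperplane $\{x\in\R^{n}\mid v\cdot x=1\}$. Being finite, $S$ is then contained in some $(n-1)$-simplex $T=\Conv(d_{1},\dots,d_{n})$ inside that hyperplane. The points $d_{1},\dots,d_{n}$ are affinely independent and lie off $\ker v$, hence are linearly independent in $\R^{n}$, so $C:=\Cone(d_{1},\dots,d_{n})$ is a simplicial cone, it contains $S$, and $C\subseteq\mathcal{H}^{+}_{v,0}$. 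In particular $\beta-\mu_{0}\in C$ for every $\beta\in\sigma_{-}(f)$ and $\mu_{0}-\alpha\in C$ for every $\alpha\in\sigma_{+}(f)$.

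Finally I would inflate $C$ to a simplex. For a parameter $t>0$ set $\mu_{i}:=\mu_{0}+t\,d_{i}$ for $i=1,\dots,n$ and $P:=\Conv(\mu_{0},\mu_{1},\dots,\mu_{n})$. Since $d_{1},\dots,d_{n}$ are linearly independent, $P$ is an $n$-simplex for every $t>0$, and its negative vertex cone at $\mu_{0}$ is
\[ P^{-,0}=\mu_{0}+\Cone(\mu_{0}-\mu_{1},\dots,\mu_{0}-\mu_{n})=\mu_{0}+\Cone(-d_{1},\dots,-d_{n})=\mu_{0}-C . \]
Thus $\sigma_{+}(f)\subseteq P^{-,0}\subseteq P^{-}$ follows immediately from $\mu_{0}-\alpha\in C$, and crucially this does \emph{not} depend on $t$. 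For the inclusion $\sigma_{-}(f)\subseteq P$, expand $\beta-\mu_{0}=\sum_{i=1}^{n}\lambda_{i}^{(\beta)}d_{i}$ with $\lambda_{i}^{(\beta)}\geq 0$ (uniquely, since the $d_{i}$ are a basis); the sums $\sum_{i}\lambda_{i}^{(\beta)}$ are bounded over the finite set $\sigma_{-}(f)$, so choosing $t$ strictly larger than that bound makes each $\beta$ a convex combination of $\mu_{0},\dots,\mu_{n}$, i.e.\ $\beta\in P$. With that choice of $t$ the simplex $P$ has the required properties.

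The construction is elementary once organized in this order; there is no deep obstruction. The only real care is the final choice of $t$, and the one step that genuinely uses the hypothesis is the strict separation of $S$ from $\ker v$ in the first paragraph — without very strictness the cone $C$ could not be kept inside an open half-space and the argument would collapse. (One may alternatively phrase the verification $\sigma_{+}(f)\subseteq P^{-,0}$ through Proposition~\ref{Prop_GeomInt_Pout}, but the direct computation of $P^{-,0}$ above is shorter.)
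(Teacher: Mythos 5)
Your proof is correct, and it reaches the simplex by a noticeably different route than the paper. The paper argues dually, via facet normals: it invokes Proposition~\ref{prop_basis} to perturb $v$ into $n$ linearly independent very strict separating vectors, takes the corresponding hyperplanes through a common level $c$ as the facets at one vertex of $P$, caps the resulting cone with a far-away hyperplane $\mathcal{H}_{w_0,a_0}$ with $w_0=-\sum w_i$, and then needs Lemma~\ref{Lemma_HyperplaneSimplex} to certify that the intersection of half-spaces is an $n$-simplex and Proposition~\ref{Prop_GeomInt_Pout} to place $\sigma_+(f)$ in $P^-$. You argue primally, via cone generators: you place the apex $\mu_0$ on a strictly separating hyperplane, normalize the difference vectors $\beta-\mu_0$ and $\mu_0-\alpha$ onto the slice $\{v\cdot x=1\}$, enclose them in an $(n-1)$-simplex there to get a simplicial cone $C$, and truncate $\mu_0+C$ at depth $t$. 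This makes $P^{-,0}=\mu_0-C$ a one-line computation from the definition, cleanly decouples the two inclusions (the positive one is $t$-independent, the negative one fixes $t$), and avoids the perturbation argument of Proposition~\ref{prop_basis} and the two auxiliary lemmas entirely; the trade-off is that the paper's version reuses machinery it needs elsewhere anyway. Your edge-case handling ($\sigma_+(f)=\emptyset$ trivial, $\sigma_-(f)\neq\emptyset$ by strictness, very strictness needed exactly to keep the negative points off $\mathcal{H}_{v,a}$ so that all elements of $S$ have strictly positive pairing with $v$) is right, and the linear independence of $d_1,\dots,d_n$ from affine independence off a hyperplane through the origin is standard. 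The only step stated without proof is that a finite subset of an $(n-1)$-dimensional affine hyperplane lies in some $(n-1)$-simplex of that hyperplane; this is standard and acceptable, but a one-line justification (a bounded set sits inside a sufficiently dilated fixed simplex) would make the argument self-contained.
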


\begin{proof}
By Proposition~\ref{prop_basis} 
there exist  $n$ linearly independent very strict separating vectors $-w_1,\dots,-w_n$, and $c\in \R^n$ such that
\begin{align}
\label{Align_Alpha}
\sigma_{-}(f) \subseteq \bigcap_{i=1}^{n} \mathcal{H}^{-}_{w_{i},c} \quad  \text{and} \quad \sigma_{+}(f) \subseteq \bigcap_{i=1}^{n} \mathcal{H}^{+}_{w_{i},c}. 
\end{align}
We consider minus the basis in Proposition~\ref{prop_basis}, as separating vectors leave the negative points on the positive side of the hyperplane, while the simplex $P$ leaves them on the negative side of the defining hyperplanes. 

We define $w_{0} := -\sum_{i=1}^{n}w_{i}$, choose $a_{0} \in \mathbb{R}$ such that $a_{0} > \max_{\mu \in \sigma(f)} w_{0} \cdot \mu$ and define 
\[P :=  \mathcal{H}^{-}_{w_{0},a_{0}} \cap \bigcap_{i=1}^{n} \mathcal{H}^{-}_{w_{i},c}.\]
 It then holds that  $\sigma_{-}(f)$ and $\sigma_{+}(f)$ belong to $\mathcal{H}^{-}_{w_{0},a_{0}}$. Thus, $\sigma_{-}(f) \subseteq P$, and  $\sigma_{+}(f) \subseteq P^{-}$ by Proposition~\ref{Prop_GeomInt_Pout}.

All that is left is to show that $P$ is an $n$-simplex. To this end, we apply Lemma~\ref{Lemma_HyperplaneSimplex}.
It is clear that every subset of $\{w_{0} , \dots , w_{n} \}$ with $n$ elements is linearly independent, so  Lemma~\ref{Lemma_HyperplaneSimplex}(i) holds.
From \eqref{Align_Alpha} follows that
\begin{align}
\label{eq:azero}
n\, (-c) \leq  \max\limits_{\beta \in \sigma_{-}(f)} \sum\limits_{i=1}^{n} -w_{i} \cdot \beta = \max\limits_{\beta \in \sigma_{-}(f)} w_{0} \cdot \beta  \leq  \max\limits_{\mu \in \sigma(f)} w_{0} \cdot \mu  < a_{0}. 
\end{align} 
For  $x \in \bigcap_{j = 1}^{n} \mathcal{H}_{w_{j},c}$, we obtain  $w_{0} \cdot x =  - n  \, c < a_{0}$, so $x \in \mathcal{H}^{-,\circ}_{w_{0},a_{0}}$. If $x \in   \mathcal{H}_{w_{0},a_{0}}  \cap \bigcap_{j=1, j\neq i}^{n} \mathcal{H}_{w_{j},c}$,  again by \eqref{eq:azero}   we have that 
\[ w_{i} \cdot x = - w_0 \cdot x  - \sum_{j=1,j\neq i}^{n}w_{j} \cdot x  = - a_{0} - (n-1)c <  n\, c - (n-1)\,c =c  .\] Hence $x \in \mathcal{H}^{-,\circ}_{w_{i},c}$ for each $i \in \{ 1, \dots , n \}$. We conclude that Lemma \ref{Lemma_HyperplaneSimplex}(ii) holds, so $P$ is an $n$-simplex and this completes the proof.
\end{proof}

In the scenario where $f$ has exactly one negative point neither the existence of a separating hyperplane nor the existence of a simplex satisfying the conditions of Theorem~\ref{Thm_Convexification} are guaranteed. In fact, if $f$ has one negative point, then a strict separating hyperplane exists if and only if the negative point is a vertex of the Newton polytope of $f$. The following example illustrates a scenario where a simplex as in Theorem~\ref{Thm_Convexification} does not exist, and $f$ has only one negative point.

\begin{ex}
Let $f\colon \mathbb{R}^{2}_{>0} \to \mathbb{R}$ be a signomial with only one negative point $\beta_{0}\in \sigma(f)$.  If $\sigma_{+}(f)$ is equal to the vertex set of a regular $m$-gon for some $m \geq 7$ with circumcenter $\beta_{0}$, then there does not exist a simplex $P$ such that $\sigma_{-}(f) \subseteq P$ and $\sigma_{+}(f) \subseteq P^{-}$.

 To see this, assume that such a simplex exists and write $P = \mathcal{H}^{-}_{w_{0},b_{0}}  \cap \mathcal{H}^{-}_{w_{1},b_{1}} \cap  \mathcal{H}^{-}_{w_{2},b_{2}}$, with $w_{0} ,w_{1},w_{2} \in \mathbb{R}^{2} $,   and $b_{0} ,b_{1},b_{2} \in \mathbb{R}$. For $a_i :=  w_{i}\cdot \beta_{0}$, $i=0,1,2$, the three lines $ \mathcal{H}_{w_{0},a_0}$,  $\mathcal{H}_{w_{1},a_1}$, and $\mathcal{H}_{w_{2},a_2}$,  intersect each other at $\beta_{0}$ and divide the circumsphere of the $m$-gon into $6$ regions.

Let  $\gamma_{0}, \gamma_1,\gamma_2 \in [0,\pi]$  be the angles of the regions cut out by $\mathcal{H}_{w_{0},a_0}$ and  $\mathcal{H}_{w_{1},a_1}$, by $\mathcal{H}_{w_{1},a_1}$ and $\mathcal{H}_{w_{2},a_2}$, and by $\mathcal{H}_{w_{2},a_2}$ and $ \mathcal{H}_{w_{0},a_0}$  respectively. Note that  $\gamma_{0} +  \gamma_{1} +\gamma_{2} = \pi$. Since $\sigma_{+}(f) \subseteq P^{-}$, the positive points are in alternating regions. Therefore one of the two regions cut out by $\mathcal{H}_{w_{0},a_0}$ and $\mathcal{H}_{w_{1},a_1}$  with angle $\gamma_{0}$ cannot contain any positive point. Since $\sigma_{+}(f)$ is the vertex set of a regular $m$-gon, for each pair of consecutive positive point $\alpha_{i},\alpha_{i+1}$ (counted counterclockwise), the angle $\measuredangle \alpha_{i}\beta_{0}\alpha_{i+1}$ equals $\tfrac{2\pi}{m}$. From this follows that $\gamma_{0}\leq  \tfrac{2 \pi}{m}$. A similar argument shows that $\gamma_{1}\leq  \tfrac{2 \pi}{m}$, $\gamma_{2}\leq  \tfrac{2 \pi}{m}$. We conclude that $\gamma_{0} + \gamma_{1} + \gamma_{2} \leq \tfrac{6 \pi}{m}$. Since $m \geq 7$, this contradicts   $\gamma_{0} + \gamma_{1} + \gamma_{2} = \pi$.
Therefore, such a simplex $P$ does not exist.
\end{ex}

{\small
\bibliographystyle{plain}
\bibliography{references}

\begin{thebibliography}{10}

\bibitem{Albert1943}
A.~A. Albert.
\newblock An inductive proof of {D}escartes' rule of signs.
\newblock {\em Am. Math. Mon.}, 50(3):178--180, 1943.

\bibitem{Alessandrini2007LogarithmicLS}
D.~Alessandrini.
\newblock Logarithmic limit sets of real semi-algebraic sets.
\newblock {\em Adv. Geom.}, 13(1):155--190, 2013.

\bibitem{Basu_RefinedBounds}
S.~Barone and S.~Basu.
\newblock Refined bounds on the number of connected components of sign
  conditions on a variety.
\newblock {\em Discrete. Comput. Geom.}, 47:577--597, 2012.

\bibitem{Basu_BoundingtheBetti}
S.~Basu.
\newblock On bounding the {B}etti numbers and computing the {E}uler
  characteristic of semi-algebraic sets.
\newblock {\em Discrete. Comput. Geom.}, 22:1--18, 1999.

\bibitem{Basu_Survey}
S.~Basu.
\newblock Algorithms in real algebraic geometry: A survey.
\newblock {\em Panor. Synthèses}, 51:107--153, 2017.

\bibitem{Basu_OnTheNumberOfCells}
S.~Basu, R.~Pollack, and M.~Roy.
\newblock On the number of cells defined by a family of polynomials on a
  variety.
\newblock {\em Mathematika.}, 43(1):120--126, 1996.

\bibitem{Basu_book}
S.~Basu, R.~Pollack, and M.~F. Roy.
\newblock {\em Algorithms in Real Algebraic Geometry (Algorithms and
  Computation in Mathematics)}.
\newblock Springer-Verlag, 2006.

\bibitem{Basu_Multi-degree}
S.~Basu and A.~Rizzie.
\newblock Multi-degree bounds on the {B}etti numbers of real varieties and
  semi-algebraic sets and applications.
\newblock {\em Discrete. Comput. Geom.}, 59:553--620, 2018.

\bibitem{VertexCone_Sottile}
M.~Beck, C.~Haase, and F.~Sottile.
\newblock Formulas of {B}rion, {L}awrence, and {V}archenko on rational
  generating functions for cones.
\newblock {\em Math Intell.}, 31:9--17, 01 2009.

\bibitem{Bihan_2016}
F.~Bihan and A.~Dickenstein.
\newblock Descartes’ rule of signs for polynomial systems supported on
  circuits.
\newblock {\em Int. Math. Res. Notices.}, 39(22):6867--6893, 2017.

\bibitem{bihan2020optimal}
F.~Bihan, A.~Dickenstein, and J.~Forsgård.
\newblock Optimal {D}escartes’ rule of signs for systems supported on
  circuits.
\newblock {\em Math. Ann.}, 2021.

\bibitem{GeomProgramming}
S.~Boyd, S.~J. Kim, L.~Vandenberghe, and A.~Hassibi.
\newblock A tutorial on geometric programming.
\newblock {\em Optim. Eng.}, 8:67--127, 2007.

\bibitem{VertexCone_Brion}
M.~Brion.
\newblock Points entiers dans les polyèdres convexes.
\newblock {\em Ann. Sci. Ecole. Norm. S.}, 21(4):653--663, 1988.

\bibitem{FeliuPlos}
C.~Conradi, E.~Feliu, M.~Mincheva, and C.~Wiuf.
\newblock Identifying parameter regions for multistationarity.
\newblock {\em PLoS Comput. Biol.}, 13(10):e1005751, 2017.

\bibitem{CGS}
G.~Craciun, L.~Garcia-Puente, and F.~Sottile.
\newblock Some geometrical aspects of control points for toric patches.
\newblock In M~D\ae{}hlen, M~S Floater, T~Lyche, J-L Merrien, K~Morken, and L~L
  Schumaker, editors, {\em Mathematical Methods for Curves and Surfaces},
  volume 5862 of {\em Lecture Notes in Comput. Sci.}, pages 111--135,
  Heidelberg, 2010. Springer.

\bibitem{craciun-feinberg-pnas}
G.~Craciun, Y.~Tang, and M.~Feinberg.
\newblock {{U}nderstanding bistability in complex enzyme-driven reaction
  networks}.
\newblock {\em Proc. Natl. Acad. Sci. U.S.A.}, 103:8697--8702, 2006.

\bibitem{Curtiss1918}
D.~R. Curtiss.
\newblock Recent extentions of {D}escartes' rule of signs.
\newblock {\em Ann. Math.}, 19(4):251--278, 1918.

\bibitem{quan_eli}
A.~Dolzmann, T.~Sturm, and V.~Weispfenning.
\newblock {\em Real Quantifier Elimination in Practice}.
\newblock 01 1999.

\bibitem{duffin1973geometric}
R.~J. Duffin and E.~L. Peterson.
\newblock Geometric programming with signomials.
\newblock {\em J. Optimiz. Theory. App.}, 11(1):3--35, 1973.

\bibitem{Forsberg2000LaurentDA}
M.~Forsberg, M.~Passare, and A.~Tsikh.
\newblock Laurent determinants and arrangements of hyperplane amoebas.
\newblock {\em Adv. Math.}, 151:45--70, 2000.

\bibitem{Gabrielov_Vorobjov}
A.~Gabrielov and N.~Vorobjov.
\newblock Approximation of definable sets by compact families, and upper bounds
  on homotopy and homology.
\newblock {\em J. London. Math. Soc.}, 80:35--54, 2009.

\bibitem{Gauss1828}
C.~F. Gauß.
\newblock Beweis eines algebraischen lehrsatzes.
\newblock {\em J. Reine. Angew. Math.}, 3:1--4, 1828.

\bibitem{polymake:2000}
E.~Gawrilow and M.~Joswig.
\newblock {\tt polymake}: a framework for analyzing convex polytopes.
\newblock In {\em Polytopes---combinatorics and computation ({O}berwolfach,
  1997)}, volume~29 of {\em DMV Sem.}, pages 43--73. Birkh\"auser, Basel, 2000.

\bibitem{gelfand1994discriminants}
I.M. Gelfand, M.M. Kapranov, and A.V. Zelevinsky.
\newblock {\em Discriminants, Resultants, and Multidimensional Determinants}.
\newblock Mathematics (Boston, Mass.). Birkh{\"a}user, 1994.

\bibitem{Grabiner_DescartesIsSharp}
D.~J. Grabiner.
\newblock Descartes' rule of signs: Another construction.
\newblock {\em Am. Math. Mon.}, 106(9):854--856, 1999.

\bibitem{grunbaum2003convex}
B.~Gr{\"u}nbaum, V.~Kaibel, V.~Klee, and G.~M. Ziegler.
\newblock {\em Convex Polytopes}.
\newblock Graduate Texts in Mathematics. Springer, 2003.

\bibitem{Hatcher_book}
A.~Hatcher.
\newblock {\em Algebraic Topology}.
\newblock Cambridge University Press, 2001.

\bibitem{Haukkanen_Tossavainen}
P.~Haukkanen and T.~Tossavainen.
\newblock A generalization of {D}escartes' rule of signs and fundamental
  theorem of algebra.
\newblock {\em Appl. Math. Comput.}, 218:1203--1207, 2011.

\bibitem{Itenberg_Roy}
I.~Itenberg and M.~F. Roy.
\newblock Multivariate {D}escartes’ rule.
\newblock {\em Beitr. Algebra. Geom.}, 37(2):337--346, 1996.

\bibitem{joshi-shiu-III}
B.~Joshi and A.~Shiu.
\newblock {A survey of methods for deciding whether a reaction network is
  multistationary}.
\newblock {\em Mathematical Modelling of Natural Phenomena}, 10(5):47--67,
  2015.

\bibitem{joswig2013polyhedral}
M.~Joswig and T.~Theobald.
\newblock {\em Polyhedral and Algebraic Methods in Computational Geometry}.
\newblock Universitext. Springer London, 2013.

\bibitem{Lagarias1997}
J.~C. Lagarias and T.~J. Richardson.
\newblock Multivariate {D}escartes rule of signs and {S}turmfels’s challenge
  problem.
\newblock {\em Math. Intell.}, 19:9--15, 1997.

\bibitem{Li_Wang}
T.~Y. Li and X.~Wang.
\newblock On multivariate {D}escartes' rule - a counterexample.
\newblock {\em Beitr. Algebra. Geom.}, 39(1):1--5, 1998.

\bibitem{Maranas_Floudas}
C.~D. Maranas and C.~A. Floudas.
\newblock All solutions of nonlinear constrained systems of equations.
\newblock {\em J. Global. Optim.}, 7:143--182, 1995.

\bibitem{MullerSigns}
S.~M\"uller, E.~Feliu, G.~Regensburger, C.~Conradi, A.~Shiu, and
  A.~Dickenstein.
\newblock Sign conditions for injectivity of generalized polynomial maps with
  applications to chemical reaction networks and real algebraic geometry.
\newblock {\em Found. Comput. Math.}, 16:69--97, 2016.

\bibitem{Nisse2008maximally}
M.~Nisse.
\newblock Maximally sparse polynomials have solid amoebas.
\newblock {\em arXiv}, (0704.2216), 2008.

\bibitem{Rockafellar}
R.~T. Rockafellar.
\newblock {\em Convex analysis}.
\newblock Princeton University Press, 1972.

\bibitem{rojas2017adiscriminants}
J.~M. Rojas and K.~Rusek.
\newblock A-discriminants for complex exponents, and counting real isotopy
  types.
\newblock {\em arXiv}, (1612.03458), 2017.

\bibitem{FuzzyGeo}
I.~Sahidul and A.~M. Wasim.
\newblock {\em Fuzzy Geometric Programming Techniques and Applications}.
\newblock Springer, 2019.

\bibitem{redlog}
T.~Sturm.
\newblock Redlog online resources for applied quantifier elimination.
\newblock {\em Act. Acad. Ab.}, 67:177–191, 02 2007.

\bibitem{sagemath}
{The Sage Developers}.
\newblock {\em {S}ageMath, the {S}age {M}athematics {S}oftware {S}ystem
  ({V}ersion 9.2)}, 2021.
\newblock {\tt https://www.sagemath.org}.

\bibitem{Tokarev2011}
D.~V. Tokarev.
\newblock A generalisation of {D}escartes’ rule of signs.
\newblock {\em J. Aust. Math. Soc.}, 91(3):415--420, 2011.

\bibitem{GenDescartes}
X.~Wang.
\newblock A simple proof of {D}escartes's rule of signs.
\newblock {\em Am. Math. Mon.}, 111:525--526, 2004.

\bibitem{Ziegler_book}
G.~M. Ziegler.
\newblock {\em Lectures on Polytopes}.
\newblock Springer, 2007.

\end{thebibliography}
}

\end{document}